\def\algspacing{\alg@unmargin}
\newtheorem{bigthm}{Theorem}
\newtheorem{thm}{Theorem}
\newtheorem{cor}[thm]{Corollary}
\newtheorem{lemma}[thm]{Lemma}
\newtheorem{prop}[thm]{Proposition}
\newtheorem{rem}[thm]{Remark}
\numberwithin{equation}{section}
\numberwithin{thm}{section}
\DeclareMathAlphabet{\mathsfsl}{OT1}{cmss}{m}{sl}
\newcommand{\term}{\emph}
\newcommand{\cnst}[1]{\mathrm{#1}}
\renewcommand{\phi}{\varphi}
\newcommand{\eps}{\varepsilon}
\newcommand{\defby}{\overset{\mathrm{\scriptscriptstyle{def}}}{=}}
\newcommand{\econst}{\mathrm{e}}
\newcommand{\Id}{\mathbf{I}}
\newcommand{\coll}[1]{\mathscr{#1}}
\newcommand{\Cspace}[1]{\mathbb{C}^{#1}}
\newcommand{\abs}[1]{\left\vert {#1} \right\vert}
\newcommand{\abssq}[1]{{\abs{#1}}^2}
\newcommand{\Prob}[1]{\mathbb{P}\left\{ {#1} \right\}}
\newcommand{\vct}[1]{\bm{#1}}
\newcommand{\mtx}[1]{\bm{#1}}
\newcommand{\adj}{*}
\newcommand{\psinv}{\dagger}
\newcommand{\supp}[1]{\operatorname{supp}(#1)}
\newcommand{\ip}[2]{\left\langle {#1},\ {#2} \right\rangle}
\newcommand{\norm}[1]{\left\Vert {#1} \right\Vert}
\newcommand{\smnorm}[2]{{\bigl\Vert {#2} \bigr\Vert}_{#1}}
\newcommand{\enorm}[1]{\norm{#1}_2}
\newcommand{\enormsq}[1]{\enorm{#1}^2}
\newcommand{\pnorm}[2]{\norm{#2}_{#1}}
\newcommand{\infnorm}[1]{\norm{#1}_{\infty}}
\newcommand{\conv}{\operatorname{conv}}
\newcommand{\subjto}{\quad\text{subject to}\quad}
\newcommand{\bigO}{\mathrm{O}}
\newcommand{\Fee}{\mtx{\Phi}}
\newcommand{\restrict}[1]{\vert_{#1}}
\DeclareMathOperator*{\polylog}{polylog}
\theoremstyle{remark}
\newcommand{\cosamp}{{\rm\sf CoSaMP}}
\begin{document}
\bibliographystyle{plain}
\title[CoSaMP: Iterative Signal Recovery from Noisy Samples]
{CoSaMP: Iterative Signal Recovery \\ from Incomplete and Inaccurate Samples}

\author{D.~Needell and J.~A.~Tropp}

\thanks{DN is with the Mathematics Dept., Univ.~California at Davis, 1 Shields Ave., Davis, CA 95616.
E-mail: \url{dneedell@math.ucdavis.edu}.
JAT is with Applied and Computational Mathematics, MC 217-50, California Inst.~Technology, Pasadena, CA 91125.
E-mail: \url{jtropp@acm.caltech.edu}.}

\date{16 March 2008.  Revised 16 April 2008.  Initially presented at Information Theory and Applications, 31 January 2008, San Diego.}

\begin{abstract}
Compressive sampling offers a new paradigm for acquiring
signals that are compressible with respect to an orthonormal basis. The major
algorithmic challenge in compressive sampling is to approximate a compressible signal from
noisy samples.  
This paper describes a new iterative recovery algorithm called \cosamp\ that
delivers the same guarantees as the best optimization-based approaches.
Moreover, this algorithm offers rigorous bounds on computational cost
and storage.  It is likely to be extremely efficient for practical problems
because it requires only matrix--vector multiplies with the sampling matrix.
For compressible signals, the running time is just $\bigO(N \log^2 N)$,
where $N$ is the length of the signal.
\end{abstract}

\keywords{Algorithms, approximation, basis pursuit, compressed sensing, orthogonal matching pursuit, restricted isometry property, signal recovery, sparse approximation, uncertainty principle}

\subjclass[2000]{41A46, 68Q25, 68W20, 90C27.}

\maketitle

\section{Introduction}

Most signals of interest contain scant information relative to their ambient dimension, but the classical approach to signal acquisition ignores this fact.  We usually collect a complete representation of the target signal and process this representation to sieve out the actionable information.  Then we discard the rest.  Contemplating this ugly inefficiency, one might ask if it is possible instead to acquire \term{compressive samples}.  In other words, is there some type of measurement that automatically winnows out the information from a signal?  Incredibly, the answer is sometimes {\em yes}.

\term{Compressive sampling} refers to the idea that, for certain types of signals, a small number of nonadaptive samples carries sufficient information to approximate the signal well.  Research in this area has two major components:

\begin{description} \setlength{\itemsep}{0.25pc}
\item	[Sampling]
	How many samples are necessary to reconstruct signals to a specified precision?  What type of samples?  How can these sampling schemes be implemented in practice?

\item	[Reconstruction]
	Given the compressive samples, what algorithms can efficiently construct a signal approximation?
\end{description}

\noindent
The literature already contains a well-developed theory of sampling, which we summarize below.  Although algorithmic work has been progressing, the state of knowledge is less than complete.  We assert that a practical signal reconstruction algorithm should have all of the following properties.

\begin{itemize}
\item	It should accept samples from a variety of sampling schemes.

\item	It should succeed using a minimal number of samples.

\item	It should be robust when samples are contaminated with noise.

\item	It should provide optimal error guarantees for every target signal.

\item	It should offer provably efficient resource usage.
\end{itemize}

\noindent
To our knowledge, no approach in the literature simultaneously accomplishes all five goals.

This paper presents and analyzes a novel signal reconstruction algorithm that achieves these desiderata.  The algorithm is called \cosamp, from the acrostic \term{Compressive Sampling Matching Pursuit}.  As the name suggests, the new method is ultimately based on orthogonal matching pursuit (OMP) \cite{TG07:Signal-Recovery}, but it incorporates several other ideas from the literature to accelerate the algorithm and to provide strong guarantees that OMP cannot.  Before we describe the algorithm, let us deliver an introduction to the theory of compressive sampling.


\subsection{Rudiments of Compressive Sampling}

To enhance intuition, we focus on sparse and compressible signals.  For vectors in $\Cspace{N}$, define the $\ell_0$ quasi-norm
$$
\pnorm{0}{ \vct{x} } = \abs{ \supp{ \vct{x} }}= \abs{ \{ j : x_j \neq 0 \} }.
$$
We say that a signal $\vct{x}$ is \term{$s$-sparse} when $\pnorm{0}{\vct{x}} \leq s$.  Sparse signals are an idealization that we do not encounter in applications, but real signals are quite often \term{compressible}, which means that their entries decay rapidly when sorted by magnitude.  As a result, compressible signals are well approximated by sparse signals.  
We can also talk about signals that are compressible with respect to other orthonormal bases, such as a Fourier or wavelet basis.  In this case, the sequence of coefficients in the orthogonal expansion decays quickly.  It represents no loss of generality to focus on signals that are compressible with respect to the standard basis, and we do so without regret.
For a more precise definition of compressibility, turn to Section~\ref{sec:unrecoverable}.

In the theory of compressive sampling, a \term{sample} is a linear functional applied to a signal.  The process of collecting multiple samples is best viewed as the action of a \term{sampling matrix} $\Fee$ on the target signal.  If we take $m$ samples, or \term{measurements}, of a signal in $\Cspace{N}$, then the sampling matrix $\Fee$ has dimensions $m \times N$.  A natural question now arises:
{\em How many measurements are necessary to acquire $s$-sparse signals?}

The minimum number of measurements $m \geq 2s$ on account of the following simple argument.  The sampling matrix must not map two different $s$-sparse signals to the same set of samples.  Therefore, each collection of $2s$ columns from the sampling matrix must be nonsingular.  It is easy to see that certain Vandermonde matrices satisfy this property, 
but these matrices are not really suitable for signal acquisition because they contain square minors that are very badly conditioned.  As a result, some sparse signals are mapped to very similar sets of samples, and it is unstable to invert the sampling process numerically.

Instead, Cand{\`e}s and Tao proposed the stronger condition that the geometry of sparse signals should be preserved under the action of the sampling matrix \cite{CT06:Near-Optimal}.  To quantify this idea, they defined the $r$th \term{restricted isometry constant} of a matrix $\Fee$ as the least number $\delta_r$ for which
\begin{equation} \label{eqn:rip}
(1 - \delta_r) \enormsq{ \vct{x} }
	\leq \enormsq{ \Fee \vct{x} }
	\leq (1 + \delta_r) \enormsq{ \vct{x} }
\qquad\text{whenever $\pnorm{0}{\vct{x}} \leq r$.}
\end{equation}
We have written $\enorm{\cdot}$ for the $\ell_2$ vector norm.  When $\delta_r < 1$, these inequalities imply that each collection of $r$ columns from $\Fee$ is nonsingular, which is the minimum requirement for acquiring $(r/2)$-sparse signals.  When $\delta_r \ll 1$, the sampling operator very nearly maintains the $\ell_2$ distance between each pair of $(r/2)$-sparse signals.  In consequence, it is possible to invert the sampling process stably.

To acquire $s$-sparse signals, one therefore hopes to achieve a small restricted isometry constant $\delta_{2s}$ with as few samples as possible.  A striking fact is that many types of random matrices have excellent restricted isometry behavior.  For example, we can often obtain $\delta_{2s} \leq 0.1$ with
$$
m = \bigO( s \log^{\alpha} N )
$$
measurements, where $\alpha$ is a small integer.  Unfortunately, no deterministic sampling matrix is known to satisfy a comparable bound.  Even worse, it is computationally difficult to check the inequalities \eqref{eqn:rip}, so it may never be possible to exhibit an explicit example of a good sampling matrix.

As a result, it is important to understand how random sampling matrices behave.  The two quintessential examples are Gaussian matrices and partial Fourier matrices.
\begin{description} \setlength{\itemsep}{0.5pc}
\item[Gaussian matrices]
	If the entries of $\sqrt{m} \Fee$ are independent and identically distributed standard normal variables then
	\begin{equation} \label{eqn:gauss-rip}
	m \geq \frac{\cnst{C} r \log(N/r)}{\eps^2}
	\qquad\Longrightarrow\qquad
	\delta_r \leq \eps
	\end{equation}
	except with probability $\econst^{-\cnst{c}m}$.  See \cite{CT06:Near-Optimal} for details.
	

\item[Partial Fourier matrices]
	If $\sqrt{m} \Fee$ is a uniformly random set of $m$ rows drawn from the $N \times N$ unitary discrete Fourier transform (DFT), then
	\begin{equation} \label{eqn:dft-rip}
	m \geq \frac{\cnst{C} r \log^5 N \cdot \log( \eps^{-1} )}{ \eps^2 }
	\qquad\Longrightarrow\qquad
	\delta_r \leq \eps
	\end{equation}
	except with probability $N^{-1}$.  See~\cite{RV06:Sparse-Reconstruction} for the proof.
	Experts believe that the power on the first logarithm should be no greater than two~\cite{Paj08:Personal-Communication}.
\end{description}
Here and elsewhere, we follow the analyst's convention that upright letters ($\cnst{c}, \cnst{C}, \dots$) refer to positive, universal constants that may change from appearance to appearance.

The Gaussian matrix is important because it has optimal restricted isometry behavior.  Indeed, for any $m \times N$ matrix,
$$
\delta_r \leq 0.1
\qquad\Longrightarrow\qquad
m \geq \cnst{C} r \log(N/r),
$$
on account of profound geometric results of Kashin \cite{Kas77:The-widths} and Garnaev--Gluskin \cite{GG84:On-widths}.
Even though partial Fourier matrices may require additional samples to achieve a small restricted isometry constant, they are more interesting for the following reasons \cite{CRT06:Robust-Uncertainty}.
\begin{itemize}
\item	There are technologies that acquire random Fourier measurements at unit cost per sample.

\item	The sampling matrix can be applied to a vector in time $\bigO(N \log N)$.

\item	The sampling matrix requires only $\bigO(m \log N)$ storage.
\end{itemize}
Other types of sampling matrices, such as the \term{random demodulator} \cite{Tro07:Beyond-Nyquist-Talk}, enjoy similar qualities.  These traits are essential for the translation of compressive sampling from theory into practice.

\subsection{Signal Recovery Algorithms}


The major algorithmic challenge in compressive sampling is to approximate a signal given a vector of noisy samples.  The literature describes a huge number of approaches to solving this problem.  They fall into three rough categories:

\begin{description} \setlength{\itemsep}{0.3pc}
\item	[Greedy pursuits]
	These methods build up an approximation one step at a time by making locally optimal choices at each step.  Examples include OMP~\cite{TG07:Signal-Recovery}, stagewise OMP (StOMP)~\cite{DTDS06:Sparse-Solution}, and regularized OMP (ROMP)~\cite{NV07:Uniform-Uncertainty,NV07:ROMP-Stable}.
	
\item	[Convex relaxation]
	These techniques solve a convex program whose minimizer is known to approximate the target signal.  Many algorithms have been proposed to complete the optimization, including interior-point methods~\cite{CRT06:Robust-Uncertainty,KKL+06:Method-Large-Scale}, projected gradient methods~\cite{FNW07:Gradient-Projection}, and iterative thresholding~\cite{DDM04:Iterative-Thresholding}.  

\item	[Combinatorial algorithms]
	These methods acquire highly structured samples of the signal that support rapid reconstruction via group testing.  This class includes Fourier sampling~\cite{GGIMS02:Near-Optimal-Sparse,GMS05:Improved}, chaining pursuit~\cite{GSTV07:Algorithmic}, and HHS pursuit~\cite{GSTV07:HHS}, as well as some algorithms of Cormode--Muthukrishnan~\cite{CM05:Combinatorial} and Iwen~\cite{I07:sub-linear}.
\end{description}

At present, each type of algorithm has its native shortcomings.  Many of the combinatorial algorithms are extremely fast---sublinear in the length of the target signal---but they require a large number of somewhat unusual samples that may not be easy to acquire.  At the other extreme, convex relaxation algorithms succeed with a very small number of measurements, but they tend to be computationally burdensome.  Greedy pursuits---in particular, the ROMP algorithm---are intermediate in their running time and sampling efficiency.

\cosamp, the algorithm described in this paper, is at heart a greedy pursuit.  It also incorporates ideas from the combinatorial algorithms to guarantee speed and to provide rigorous error bounds \cite{GSTV07:HHS}.  The analysis is inspired by the work on ROMP~\cite{NV07:Uniform-Uncertainty,NV07:ROMP-Stable} and the work of Cand{\`e}s--Romberg--Tao \cite{CRT06:Stable} on convex relaxation methods.  In particular, we establish the following result.

\begin{bigthm}[\cosamp] \label{thm:cosamp}
Suppose that $\Fee$ is an $m \times N$ sampling matrix with restricted isometry constant $\delta_{2s} \leq \cnst{c}$.  Let $\vct{u} = \Fee \vct{x} + \vct{e}$ be a vector of samples of an arbitrary signal, contaminated with arbitrary noise.  For a given precision parameter $\eta$, the algorithm \cosamp\ produces a $2s$-sparse approximation $\vct{a}$ that satisfies
$$
\enorm{ \vct{x} - \vct{a} } \leq
	\cnst{C} \cdot \max\left\{ \eta, 
	\frac{1}{\sqrt{s}} \pnorm{1}{\vct{x} - \vct{x}_s} + \enorm{ \vct{e} }
	\right\}
$$
where $\vct{x}_s$ is a best $s$-sparse approximation to $\vct{x}$.
The running time is $\bigO( \coll{L} \cdot \log ( \enorm{\vct{x}} / \eta ) )$, where $\coll{L}$ bounds the cost of a matrix--vector multiply with $\Fee$ or $\Fee^\adj$.  The working storage use is $\bigO(N)$.
\end{bigthm}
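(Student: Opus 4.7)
The plan is to prove Theorem~\ref{thm:cosamp} by identifying an iteration invariant: each pass through \cosamp\ contracts the approximation error by a fixed factor, up to an additive ``unrecoverable energy'' term that depends only on the noise $\vct{e}$ and the tail $\vct{x}-\vct{x}_s$.  Iterating $\bigO(\log(\enorm{\vct{x}}/\eta))$ times then drives the algorithmic error below $\eta$, and the desired bound follows from the triangle inequality.

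First, I would reduce to the purely sparse case.  Write $\vct{u} = \Fee \vct{x}_s + \tilde{\vct{e}}$, where the effective noise $\tilde{\vct{e}} = \Fee(\vct{x}-\vct{x}_s)+\vct{e}$.  A standard computation using the RIP, together with a Cauchy--Schwarz argument that compares $\ell_2$ and $\ell_1$ norms across dyadic blocks of the sorted tail of $\vct{x}$, gives $\enorm{\tilde{\vct{e}}} \leq \cnst{C}\bigl(s^{-1/2}\pnorm{1}{\vct{x}-\vct{x}_s}+\enorm{\vct{e}}\bigr) \defby \nu$.  Thus we may pretend $\vct{u}$ is noisy samples of the $s$-sparse signal $\vct{x}_s$ with noise level $\nu$, and we seek $\enorm{\vct{x}_s - \vct{a}^{k+1}} \leq \half \enorm{\vct{x}_s - \vct{a}^k} + \cnst{C} \nu$.

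The per-iteration analysis decomposes along the four substeps of \cosamp.  Let $\vct{r} = \vct{x}_s - \vct{a}^k$, and let $\vct{y} = \Fee^\adj(\vct{u} - \Fee \vct{a}^k)$.  In the \emph{identification} step, I would exploit the near-isometry $\Fee^\adj \Fee \approx \Id$ on sparse supports: for any set $S$ of size at most $4s$, $\enorm{(\Fee^\adj\Fee - \Id)\vct{r}\restrict{S}} \leq \delta_{4s}\enorm{\vct{r}}$, with a similar bound on $\Fee^\adj \tilde{\vct{e}}$ restricted to a small set.  From this I would conclude that if $\Omega$ is the set of $2s$ largest entries of $\vct{y}$, then $\enorm{\vct{r}\restrict{\Omega^c}} \leq \alpha\enorm{\vct{r}} + \cnst{C}\nu$ for some $\alpha < \half$ when $\delta_{4s}$ is sufficiently small.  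The merged support $T = \Omega \cup \supp{\vct{a}^k}$ has size at most $3s$ and captures all but an $\alpha$-fraction of $\vct{r}$'s energy.  In the \emph{least-squares} step, the usual RIP analysis of $\Fee_T^\psinv$ gives that $\vct{b}$ obeys $\enorm{\vct{x}_s - \vct{b}} \leq \cnst{C}'(\enorm{\vct{r}\restrict{T^c}} + \nu)$.  Finally, for the \emph{pruning} step, the elementary inequality $\enorm{\vct{x}_s - \vct{b}_s} \leq 2\enorm{\vct{x}_s - \vct{b}}$ (truncation to the best $s$-term approximation is at most a factor two suboptimal in $\ell_2$) controls the loss.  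Combining these three bounds, tuning $\delta_{4s}$ so the resulting contraction factor is at most $\half$, yields the invariant.

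The main obstacle I expect is the identification step: quantifying how much of $\vct{r}$'s energy must sit on $\Omega$ when $\Omega$ indexes the $2s$ largest entries of the proxy $\vct{y}$ rather than of $\vct{r}$ itself.  The cleanest route is to compare $\enorm{\vct{y}\restrict{\Omega}}$ with $\enorm{\vct{y}\restrict{S}}$ for $S \subset \supp(\vct{r}) \cup \supp(\vct{a}^k)$ of size $2s$ containing the unidentified energy, use $\Omega$'s definition to get $\enorm{\vct{y}\restrict{S \setminus \Omega}} \leq \enorm{\vct{y}\restrict{\Omega \setminus S}}$, and then convert back to $\vct{r}$ via the near-isometry.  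Pinning down the precise RIP threshold $\cnst{c}$ on $\delta_{2s}$ (via monotonicity, $\delta_{4s} \leq \text{const}\cdot\delta_{2s}$ in the sense needed) is where the numerical constants are forced and where care is required.  Once the invariant is established, telescoping gives $\enorm{\vct{x}_s - \vct{a}^K} \leq 2^{-K}\enorm{\vct{x}_s} + \cnst{C}\nu$; choosing $K = \lceil \log_2(\enorm{\vct{x}}/\eta)\rceil$ and adding $\enorm{\vct{x} - \vct{x}_s} \leq \nu$ via the triangle inequality delivers the claimed error and running-time bound, since each iteration costs $\bigO(\coll{L})$ dominated by the proxy multiply and a constant number of sparse least-squares solves.
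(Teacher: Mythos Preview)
Your plan is essentially the paper's own proof: reduce to the sparse case via $\vct{u}=\Fee\vct{x}_s+\tilde{\vct{e}}$, establish a per-iteration contraction $\enorm{\vct{x}_s-\vct{a}^{k+1}}\le\half\enorm{\vct{x}_s-\vct{a}^k}+\cnst{C}\nu$ by chaining identification, support-merger, least-squares, and pruning lemmas (your identification argument via $\enorm{\vct{y}\restrict{S\setminus\Omega}}\le\enorm{\vct{y}\restrict{\Omega\setminus S}}$ is exactly the paper's), and then telescope; the conversion $\delta_{4s}\le\text{const}\cdot\delta_{2s}$ is handled in the paper by a block-Gershgorin argument giving $\delta_{cr}\le c\,\delta_{2r}$.

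One technical slip to fix: your claim $\enorm{\vct{x}-\vct{x}_s}\le\nu$ with $\nu=\cnst{C}\bigl(s^{-1/2}\pnorm{1}{\vct{x}-\vct{x}_s}+\enorm{\vct{e}}\bigr)$ is false in general (take a tail with a single nonzero entry), and likewise the shelling bound on $\enorm{\Fee(\vct{x}-\vct{x}_s)}$ produces an extra $\enorm{\vct{x}-\vct{x}_s}$ term you cannot absorb into $s^{-1/2}\pnorm{1}{\vct{x}-\vct{x}_s}$. The paper resolves this by keeping $\enorm{\vct{x}-\vct{x}_s}$ in the definition of the unrecoverable energy, then using $\enorm{\vct{x}-\vct{x}_s}\le (2\sqrt{s/2})^{-1}\pnorm{1}{\vct{x}-\vct{x}_{s/2}}$ and finally substituting $s/2\mapsto s$, which is why the output in Theorem~\ref{thm:cosamp} is $2s$-sparse while the tail is measured at level $s$.
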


Let us expand on the statement of this result.  First, recall that many types of random sampling matrices satisfy the restricted isometry hypothesis when the number of samples $m = \bigO( s \log^{\alpha} N )$.  Therefore, the theorem applies to a wide class of sampling schemes when the number of samples is proportional to the target sparsity and logarithmic in the ambient dimension of the signal space.


The algorithm produces a $2s$-sparse approximation whose $\ell_2$ error is comparable with the scaled $\ell_1$ error of the best $s$-sparse approximation to the signal.  Of course, the algorithm cannot resolve the uncertainty due to the additive noise, so we also pay for the energy in the noise.  This type of error bound is structurally optimal, as we discuss in Section~\ref{sec:unrecoverable}.  Some disparity in the sparsity levels (here, $2s$ versus $s$) seems to be necessary when the recovery algorithm is computationally efficient~\cite{RG08:Sampling-Bounds}.



We can interpret the error guarantee as follows.  In the absence of noise, the algorithm can recover an $s$-sparse signal to arbitrarily high precision.  Performance degrades gracefully as the energy in the noise increases.  Performance also degrades gracefully for compressible signals.  The theorem is ultimately vacuous for signals that cannot be approximated by sparse signals, but compressive sampling is not an appropriate technique for this class.

The running time bound indicates that each matrix--vector multiplication reduces the error by a constant factor (if we amortize over the entire execution).  That is, the algorithm has linear convergence%
\footnote{Mathematicians sometimes refer to linear convergence as ``exponential convergence.''}.
We find that the total runtime is roughly proportional to (the negation of) the \term{reconstruction signal-to-noise ratio} 
$$
\textsf{R-SNR} = 10 \log_{10} \left( \frac{ \enorm{ \vct{x} - \vct{a} } }{ \enorm{\vct{x}} } \right)
\quad\text{dB}.
$$
For compressible signals, one can show that $\abs{\textsf{R-SNR}} = \bigO(\log s)$.  The runtime is also proportional to the cost of a matrix--vector multiply.  For sampling matrices with a fast multiply, the algorithm is accelerated substantially.  In particular, for the partial Fourier matrix, a matrix--vector multiply requires time $\bigO(N \log N)$.  It follows that the total runtime is $\bigO( N \log N \cdot \abs{\textsf{R-SNR}})$.  For most signals of interest,
this cost is nearly linear in the signal length!

\subsection{Notation}

Let us instate several pieces of notation that are carried throughout the paper.  For $p \in [1, \infty]$, we write $\pnorm{p}{\cdot}$ for the usual $\ell_p$ vector norm.  We reserve the symbol $\norm{ \cdot }$ for the spectral norm, i.e., the natural norm on linear maps from $\ell_2$ to $\ell_2$.

Suppose that $\vct{x}$ is a signal in $\Cspace{N}$ and $r$ is a positive integer.  We write $\vct{x}_r$ for the signal in $\Cspace{N}$ that is formed by restricting $\vct{x}$ to its $r$ largest-magnitude components.  Ties are broken lexicographically.  This signal is a best $r$-sparse approximation to $\vct{x}$ with respect to any $\ell_p$ norm.  Suppose now that $T$ is a subset of $\{1, 2, \dots, N\}$.  We define the restriction of the signal to the set $T$ as
$$
\vct{x}\restrict{T} = \begin{cases}
x_i, & i \in T \\
0, & \text{otherwise}.
\end{cases}
$$
We occasionally abuse the notation and treat $\vct{x}\restrict{T}$ as an element of the vector space $\Cspace{T}$.  We also define the restriction $\Fee_T$ of the sampling matrix $\Fee$ as the column submatrix whose columns are listed in the set $T$.

Finally, we define the pseudoinverse of a tall, full-rank matrix $\mtx{A}$ by the formula $\mtx{A}^\psinv = (\mtx{A}^\adj \mtx{A})^{-1} \mtx{A}^\adj$.

\subsection{Organization}

The rest of the paper has the following structure. In Section~\ref{sec:cosamp} we introduce the \cosamp\ algorithm, we state the major theorems in more detail, and we discuss implementation and resource requirements.  Section~\ref{sec:rip} describes some consequences of the restricted isometry property that pervade our analysis.  The central theorem is established for sparse signals in Sections~\ref{sec:invar-sparse} and~\ref{sec:iterative-ls}.  We extend this result to general signals in Section~\ref{sec:cosamp-pf}.  Finally, Section~\ref{sec:compare} places the algorithm in the context of previous work.  The first appendix presents variations on the algorithm.  The second appendix contains a bound on the number of iterations required when the algorithm is implemented using exact arithmetic.

\section{The \cosamp\ Algorithm}\label{sec:cosamp}

This section gives an overview of the algorithm, along with explicit pseudocode.  It presents the major theorems on the performance of the algorithm.  Then it covers details of implementation and bounds on resource requirements.

\subsection{Intuition}

The most difficult part of signal reconstruction is to identify the locations of the largest components in the target signal.  \cosamp\ uses an approach inspired by the restricted isometry property.  Suppose that the sampling matrix $\Fee$ has restricted isometry constant $\delta_s \ll 1$.  For an $s$-sparse signal $\vct{x}$, the vector $\vct{y} = \Fee^\adj \Fee \vct{x}$ can serve as a proxy for the signal because the energy in each set of $s$ components of $\vct{y}$ approximates the energy in the corresponding $s$ components of $\vct{x}$.  In particular, the largest $s$ entries of the proxy $\vct{y}$ point toward the largest $s$ entries of the signal $\vct{x}$.  Since the samples have the form $\vct{u} = \Fee \vct{x}$, we can obtain the proxy just by applying the matrix $\Fee^\adj$ to the samples.

The algorithm invokes this idea iteratively to approximate the target signal.  At each iteration, the current approximation induces a residual, the part of the target signal that has not been approximated.  As the algorithm progresses, the samples are updated so that they reflect the current residual.  These samples are used to construct a proxy for the residual, which permits us to identify the large components in the residual.  This step yields a tentative support for the next approximation.  We use the samples to estimate the approximation on this support set using least squares.  This process is repeated until we have found the recoverable energy in the signal.

\subsection{Overview}
As input, the \cosamp\ algorithm requires four pieces of information:
\begin{itemize}
\item	Access to the sampling operator via matrix--vector multiplication.

\item	A vector of (noisy) samples of the unknown signal.

\item	The sparsity of the approximation to be produced.

\item	A halting criterion.
\end{itemize}

\noindent
The algorithm is initialized with a trivial signal approximation, which means that the initial residual equals the unknown target signal.  During each iteration, \cosamp\ performs five major steps:


\begin{enumerate} \setlength{\itemsep}{0.5pc}
\item	{\bf Identification.}  The algorithm forms a proxy of the residual from the current samples and locates the largest components of the proxy.


\item	{\bf Support Merger.}  The set of newly identified components is united with the set of components that appear in the current approximation.

\item	{\bf Estimation.}  The algorithm solves a least-squares problem to approximate the target signal on the merged set of components.


\item	{\bf Pruning.}  The algorithm produces a new approximation by retaining only the largest entries in this least-squares signal approximation.


\item	{\bf Sample Update.}  Finally, the samples are updated so that they reflect the residual, the part of the signal that has not been approximated.
\end{enumerate}

\noindent
These steps are repeated until the halting criterion is triggered.  In the body of this work, we concentrate on methods that use a fixed number of iterations.  Appendix~\ref{app:variations} discusses some other simple stopping rules that may also be useful in practice.

Pseudocode for \cosamp\ appears as Algorithm~\ref{alg:cosamp}.  This code describes the version of the algorithm that we analyze in this paper.  Nevertheless, there are several adjustable parameters that may improve performance: the number of components selected in the identification step and the number of components retained in the pruning step.  For a brief discussion of other variations on the algorithm, turn to Appendix~\ref{app:variations}.




\vfill

\begin{algorithm}[thb]
\caption{\cosamp\ Recovery Algorithm}
	\label{alg:cosamp}
\centering \fbox{
\begin{minipage}{.9\textwidth} 
\vspace{4pt}
\algname{\cosamp}{$\Fee$, $\vct{u}$, $s$}
\alginout{Sampling matrix $\Fee$, noisy sample vector $\vct{u}$, sparsity level $s$}
{An $s$-sparse approximation $\vct{a}$ of the target signal
}
\vspace{8pt}\hrule\vspace{8pt}

\begin{algtab*}
$\vct{a}^0 \leftarrow \vct{0}$
	 	\hfill \{ Trivial initial approximation \} \\
$\vct{v} \leftarrow \vct{u}$
		 \hfill \{ Current samples = input samples \} \\
$k \leftarrow 0$ \\

\algrepeat
	$k \leftarrow k + 1$ \\

	$\vct{y} \leftarrow \Fee^\adj \vct{v}$
		\hfill \{ Form signal proxy \} \\
	$\Omega \leftarrow \supp{ \vct{y}_{2s} }$
		\hfill \{ Identify large components \} \\	
	$T \leftarrow \Omega \cup \supp{ \vct{a}^{k-1} }$
		\hfill \{ Merge supports \} \\
	
	$\vct{b}\restrict{T} \leftarrow \Fee_T^\psinv \vct{u}$
		\hfill \{ Signal estimation by least-squares \} \\		$\vct{b}\restrict{T^c} \leftarrow \vct{0}$ \\
	
	$\vct{a}^{k} \leftarrow \vct{b}_s$
		\hfill \{ Prune to obtain next approximation \} \\
	$\vct{v} \leftarrow \vct{u} - \Fee \vct{a}^{k}$
		\hfill \{ Update current samples \} \\
		
\alguntil{halting criterion {\it true}}
	

\end{algtab*}
\vspace{-8pt}
\end{minipage}}
\end{algorithm}

\vfill







\clearpage

\subsection{Performance Guarantees}

This section describes our theoretical analysis of the behavior of \cosamp.  The next section covers the resource requirements of the algorithm.  Afterward, Section~\ref{sec:thm-A} combines these materials to establish Theorem~\ref{thm:cosamp}.

Our results depend on a set of hypotheses that has become common in the compressive sampling literature.  Let us frame the standing assumptions:

\vspace{1pc}
\fbox{
\begin{minipage}{0.9\textwidth}
\centering
\textsc{\cosamp\ Hypotheses} \\
\begin{itemize}
\item	The sparsity level $s$ is fixed.
\item	The $m \times N$ sampling operator $\Fee$ has restricted isometry constant $\delta_{4s} \leq 0.1$. \\
\item	The signal $\vct{x} \in \Cspace{N}$ is arbitrary, except where noted.
\item	The noise vector $\vct{e} \in \Cspace{m}$ is arbitrary.
\item	The vector of samples $\vct{u} = \Fee \vct{x} + \vct{e}$.
\end{itemize}
\end{minipage}} 
\vspace{1pc}

\noindent
We also define the \term{unrecoverable energy} $\nu$ in the signal.  This quantity measures the baseline error in our approximation that occurs because of noise in the samples or because the signal is not sparse.

\vspace{1pc}
\fbox{
\begin{minipage}{0.9\textwidth}
\begin{equation} \label{eqn:unrecoverable}
\nu = \enorm{ \vct{x} - \vct{x}_s }
	+ \frac{1}{\sqrt{s}} \pnorm{1}{ \vct{x} - \vct{x}_s }
	+ \enorm{ \vct{e} }.
\end{equation}
\end{minipage}}
\vspace{1pc}

\noindent
We postpone a more detailed discussion of the unrecoverable energy until Section~\ref{sec:unrecoverable}.



Our key result is that \cosamp\ makes significant progress during each iteration where the approximation error is large relative to the unrecoverable energy.  

\begin{thm}[Iteration Invariant] \label{thm:cosamp-invar}
For each iteration $k \geq 0$, the signal approximation $\vct{a}^k$ is $s$-sparse and
$$
\smnorm{2}{ \vct{x} - \vct{a}^{k+1} }
	\leq 0.5 \smnorm{2}{ \vct{x} - \vct{a}^{k} } + 10 \nu.
$$
In particular,
$$
\smnorm{2}{ \vct{x} - \vct{a}^{k} } \leq 2^{-k} \enorm{ \vct{x} } +
	20 \nu.
$$
\end{thm}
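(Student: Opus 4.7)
The plan is to prove the iteration invariant by induction on $k$ and then unwind the recursion to get the second, geometric bound. Since $\vct{a}^0 = \vct{0}$ is $s$-sparse, the base case is trivial, so the work is in the one-step contraction $\enorm{\vct{x} - \vct{a}^{k+1}} \leq \tfrac{1}{2} \enorm{\vct{x} - \vct{a}^k} + 10 \nu$. I would first reduce to the $s$-sparse case by writing $\vct{u} = \Fee \vct{x}_s + \bigl(\Fee(\vct{x} - \vct{x}_s) + \vct{e}\bigr)$ and absorbing $\Fee(\vct{x} - \vct{x}_s)$ into an effective noise. A standard RIP-based estimate bounds $\enorm{\Fee(\vct{x} - \vct{x}_s)}$ in terms of $\enorm{\vct{x} - \vct{x}_s}$ and $\tfrac{1}{\sqrt{s}} \pnorm{1}{\vct{x} - \vct{x}_s}$, so this combined noise has norm at most a constant times $\nu$ from \eqref{eqn:unrecoverable}.

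For $s$-sparse $\vct{x}$ and residual $\vct{r} = \vct{x} - \vct{a}^k$ (which is $2s$-sparse), I would analyze the five steps of one \cosamp\ iteration and deduce four error bounds. \textbf{Identification:} the proxy $\vct{y} = \Fee^\adj \Fee \vct{r} + \Fee^\adj \vct{e}$ approximates $\vct{r}$ on any $4s$-sparse coordinate set, via the RIP consequences $\norm{\Fee_T^\adj \Fee_T - \mtx{I}} \leq \delta_{|T|}$ and $\norm{\Fee_T^\adj \Fee_\Lambda} \leq \delta_{|T|+|\Lambda|}$ for disjoint $T,\Lambda$. A short comparison argument, using that $\Omega = \supp{\vct{y}_{2s}}$ is the optimal $2s$-set for $\vct{y}$ and that $\supp \vct{r}$ is an admissible $2s$-set, yields $\enorm{\vct{r}|_{\Omega^c}} \leq c \enorm{\vct{r}} + c' \enorm{\vct{e}}$ with small $c$. \textbf{Merger:} with $T = \Omega \cup \supp{\vct{a}^k}$, $|T| \leq 3s$ and $\enorm{\vct{x} - \vct{x}|_T} \leq \enorm{\vct{r}|_{\Omega^c}}$. \textbf{Estimation:} from $\vct{b}|_T - \vct{x}|_T = \Fee_T^\psinv \Fee_{T^c}\vct{x}|_{T^c} + \Fee_T^\psinv \vct{e}$, RIP gives $\norm{\Fee_T^\psinv} \leq (1 - \delta_{3s})^{-1/2}$ and approximate orthogonality bounds the cross term (since $\vct{x}|_{T^c}$ has support size at most $s$), so $\enormsq{\vct{x} - \vct{b}} = \enormsq{\vct{x}|_T - \vct{b}|_T} + \enormsq{\vct{x}|_{T^c}}$ is controlled. \textbf{Pruning:} since $\vct{a}^{k+1} = \vct{b}_s$ is the best $s$-sparse approximation to $\vct{b}$ and $\vct{x}$ is $s$-sparse, $\enorm{\vct{b} - \vct{a}^{k+1}} \leq \enorm{\vct{b} - \vct{x}}$, so $\enorm{\vct{x} - \vct{a}^{k+1}} \leq 2 \enorm{\vct{x} - \vct{b}}$. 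Composing these bounds yields $\enorm{\vct{x} - \vct{a}^{k+1}} \leq \alpha \enorm{\vct{x} - \vct{a}^k} + \beta \enorm{\vct{e}}$ with explicit $\alpha, \beta$ depending on $\delta_s, \delta_{2s}, \delta_{3s}, \delta_{4s}$.

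The main obstacle is constant bookkeeping. Each of the four steps inflates the error slightly --- most visibly the factor of $2$ from pruning --- and the hypothesis $\delta_{4s} \leq 0.1$ is calibrated precisely so the composed contraction $\alpha$ comes out at most $\tfrac{1}{2}$ and the noise multiplier reaches the prescribed $10$. I expect the identification step needs the most delicate handling, since bounding $\enorm{\vct{y} - \vct{r}}$ globally is too weak; the sharper approach restricts to the symmetric difference $\Omega \triangle \supp \vct{r}$, which has size at most $4s$, so RIP applies with the tight constant $\delta_{4s}$. Once the per-step contraction is in hand, iterating $\enorm{\vct{x} - \vct{a}^{k+1}} \leq 0.5 \enorm{\vct{x} - \vct{a}^k} + 10\nu$ from $k=0$ and summing the geometric series $\sum_{i \geq 0} 2^{-i} = 2$ yields $\enorm{\vct{x} - \vct{a}^k} \leq 2^{-k} \enorm{\vct{x}} + 20 \nu$, as claimed.
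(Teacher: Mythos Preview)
Your proposal is correct and follows essentially the same approach as the paper: reduce to the sparse case by absorbing $\Fee(\vct{x}-\vct{x}_s)$ into an effective noise via the energy bound (the paper's Proposition~\ref{prop:k2-bd} and Lemma~\ref{lem:reduction}), then establish the sparse contraction by chaining four lemmas on Identification, Support Merger, Estimation, and Pruning, and finally unwind the recursion. Your anticipation that the identification step is handled on the symmetric difference $\Omega \triangle \supp{\vct{r}}$ matches the paper's argument (it splits into $R\setminus\Omega$ and $\Omega\setminus R$ and uses $\delta_{4s}$ there), and the constants work out just as you describe.
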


\noindent
The proof of Theorem~\ref{thm:cosamp-invar} will occupy us for most of this paper.  In Section~\ref{sec:invar-sparse}, we establish an analog for sparse signals.  The version for general signals appears as a corollary in Section~\ref{sec:cosamp-pf}.




Theorem~\ref{thm:cosamp-invar} has some immediate consequences for the quality of reconstruction with respect to standard signal metrics.  In this setting, a sensible definition of the \term{signal-to-noise ratio} (SNR) is
$$
\textsf{SNR} = 10 \log_{10}
	\left( \frac{ \enorm{ \vct{x} } }{ \nu } \right).
$$
The \term{reconstruction SNR} is defined as
$$
\textsf{R-SNR} = 10 \log_{10} \left(
	\frac{ \enorm{ \vct{x} - \vct{a} } }{ \enorm{ \vct{x} } } \right).
$$
Both quantities are measured in decibels.  Theorem~\ref{thm:cosamp-invar} implies that, after $k$ iterations, the reconstruction SNR satisfies 
$$
\textsf{R-SNR} \apprle 3 - \min\{ 3k, \textsf{SNR} - 13 \}. 
$$
In words, each iteration reduces the reconstruction SNR by about 3 decibels until the error nears the noise floor.  To reduce the error to its minimal value, the number of iterations is proportional to the SNR.

Let us consider a slightly different scenario.  Suppose that the signal $\vct{x}$ is $s$-sparse, so the unrecoverable energy $\nu = \enorm{\vct{e}}$.  Define the \term{dynamic range}
$$
\Delta = 10 \log_{10} \left( 
	\frac{ \max \abs{x_i} }{ \min \abs{x_i} } \right)
\qquad\text{where $i$ ranges over $\supp{\vct{x}}$.}
$$
Assume moreover that the minimum nonzero component of the signal is at least $40\nu$.  Using the fact that $\enorm{\vct{x}} \leq \sqrt{s} \infnorm{\vct{x}}$, it is easy to check that $\enorm{\vct{x} -\vct{a}} \leq \min \abs{x_i}$ as soon as the number $k$ of iterations satisfies
$$
k \apprge 3.3 \Delta + \log_2 \sqrt{s} + 1.
$$
It follows that the support of the approximation $\vct{a}$ must contain every entry in the support of the signal $\vct{x}$.


This discussion suggests that the number of iterations might be substantial if we require a very low reconstruction SNR or if the signal has a very wide dynamic range.  This initial impression is not entirely accurate.  We have established that, when the algorithm performs arithmetic to high enough precision, then a fixed number of iterations suffices to reduce the approximation error to the same order as the unrecoverable energy.  Here is a result for exact computations.

\begin{thm}[Iteration Count] \label{thm:cosamp-count}
Suppose that \cosamp\ is implemented with exact arithmetic.  After at most $6(s+1)$ iterations, \cosamp\ produces an $s$-sparse approximation $\vct{a}$ that satisfies
$$
\enorm{ \vct{x} - \vct{a} } \leq 20 \nu.
$$
\end{thm}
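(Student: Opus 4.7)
The approach is to combine the iteration invariant (Theorem~\ref{thm:cosamp-invar}) with a phase-by-phase analysis that exploits the exact-arithmetic hypothesis. Iterating the invariant $\enorm{\vct{x}-\vct{a}^{k+1}} \leq \tfrac{1}{2}\enorm{\vct{x}-\vct{a}^k} + 10\nu$ across any block of six consecutive iterations yields
$$
\enorm{\vct{x} - \vct{a}^{k+6}} - 20\nu \;\leq\; \tfrac{1}{64}\bigl(\enorm{\vct{x} - \vct{a}^{k}} - 20\nu\bigr),
$$
so every block of six shrinks the excess above the noise floor $20\nu$ by a factor of $64$. This contraction alone does not finish the job: the naive bound $2^{-6(s+1)}\enorm{\vct{x}} + 20\nu$ is not at most $20\nu$ for an arbitrary signal, so some additional structure is needed to convert the asymptotic convergence into a finite-time guarantee.

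The plan is to partition the $6(s+1)$ iterations into $s+1$ blocks of six and track, block by block, the overlap between $\supp \vct{a}^k$ and $\supp \vct{x}_s$. By induction on the block index $j$, I would establish the dichotomy: at the end of block $j$, either $\enorm{\vct{x}-\vct{a}^{6j}} \leq 20\nu$ (and we are done), or else $|\supp \vct{x}_s \cap \supp \vct{a}^{6j}| \geq j$. The inductive step rests on a support-capture lemma: whenever the current error still exceeds $20\nu$, the factor-of-$64$ contraction over the next block is large enough to force the proxy $\Fee^\adj \vct{v}$ to expose at least one entry of $\vct{x}_s$ not yet in $\supp \vct{a}^k$; the identification, merger, and (in exact arithmetic) least-squares steps then install that coordinate in the new approximation. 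After $s$ blocks, the dichotomy forces $\supp \vct{x}_s \subseteq \supp \vct{a}^{6s}$, and the final block applies the least-squares step on this correct support, after which the iteration invariant closes out the argument with $\enorm{\vct{x}-\vct{a}} \leq 20\nu$.

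The main obstacle is proving the support-capture lemma with sharp enough constants. Concretely, one must carry out an RIP-based perturbation analysis of the proxy and show that once $\enorm{\vct{x}-\vct{a}^k}$ drops below a specific multiple of the largest entry of $\vct{x}_s$ missing from $\supp \vct{a}^k$, that entry is guaranteed to appear among the top $2s$ components of $\Fee^\adj \vct{v}$. The constant six in the bound $6(s+1)$ is essentially the smallest integer for which $2^{-6} = 1/64$ clears the margin imposed by the restricted-isometry perturbation in the identification inequality; the contributions from the tail $\vct{x}-\vct{x}_s$ and the noise $\vct{e}$ must be absorbed into the unrecoverable energy $\nu$ during this accounting, which is the most delicate part of the argument.
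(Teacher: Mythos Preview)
Your support-capture argument has a genuine gap. You propose that the factor-of-$64$ reduction in $\enorm{\vct{x}-\vct{a}^k}$ over a block of six steps forces some missing index $i^\star \in \supp{\vct{x}_s} \setminus \supp{\vct{a}^k}$ into the next approximation, presumably via the implication ``$\enorm{\vct{x}-\vct{a}^{k+6}} < |x_{i^\star}| \Rightarrow i^\star \in \supp{\vct{a}^{k+6}}$.'' But nothing in the raw iteration invariant controls the ratio $\enorm{\vct{x}-\vct{a}^k} / |x_{i^\star}|$: when many missing entries have comparable magnitude this ratio is of order $\sqrt{s}$, and a fixed factor of $64$ cannot close it. More basically, Theorem~\ref{thm:cosamp-invar} by itself yields only $\enorm{\vct{x}-\vct{a}^k}\leq 2^{-k}\enorm{\vct{x}}+20\nu$, which never reaches $20\nu$ in finitely many steps; some structural fact beyond the invariant must enter, and your sketch does not supply one.

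The missing ingredient---and the place where exact arithmetic actually does work---is that the estimation and pruning lemmas combine to give $\enorm{\vct{x}-\vct{a}^k}\leq 2.3\,\enorm{\vct{x}\restrict{\supp{\vct{a}^k}^c}}$ whenever the error is not already noise-dominated, tying the residual to the \emph{unidentified} mass rather than merely to the previous residual. The paper's argument (Appendix~\ref{app:count-sparse}) exploits this by grouping the entries of $\vct{x}_s$ into dyadic magnitude bands and showing that the bands are acquired in order: once the residual falls below the scale of a band, every index in that band lies in $\supp{\vct{a}^k}$ and persists thereafter. The per-iteration contraction used is $0.75$ (from a dichotomy sharper than Theorem~\ref{thm:cosamp-invar}), the iteration count is $p\log_{4/3}(1+4.6\sqrt{s/p})$ where $p$ is the number of nonempty bands, and the worst case $p=s$ gives $6(s+1)$ with $6 \approx \log_{4/3}5.6$---not $\log_2 64$. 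So your sketch has the wrong contraction rate, the wrong source for the constant $6$, and omits the estimate that connects the residual to the missing support.
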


In fact, even more is true.  The number of iterations depends significantly on the structure of the signal.  The only situation where the algorithm needs $\Omega(s)$ iterations occurs when the entries of the signal decay exponentially.  For signals whose largest entries are comparable, the number of iterations may be as small as $\bigO( \log s )$.  This claim is quantified in Appendix \ref{app:count-sparse}, where we prove Theorem~\ref{thm:cosamp-count}.

If we solve the least-squares problems to high precision, an analogous result holds, but the approximation guarantee contains an extra term that comes from solving the least-squares problems imperfectly.  In practice, it may be more efficient overall to solve the least-squares problems to low precision.  The correct amount of care seems to depend on the relative costs of forming the signal proxy and solving the least-squares problem, which are the two most expensive steps in the algorithm.  We discuss this point in the next section.  Ultimately, the question is best settled with empirical studies.

\begin{rem}
In the hypotheses, a bound on the restricted isometry constant $\delta_{2s}$ also suffices.  Indeed, Corollary~\ref{cor:dumb-rip-bd} of the sequel implies that $\delta_{4s} \leq 0.1$ holds whenever $\delta_{2s} \leq 0.025$.  
\end{rem}

\begin{rem}
\noindent
The expression \eqref{eqn:unrecoverable} for the unrecoverable energy can be simplified using Lemma~7 from~\cite{GSTV07:HHS}, which states that, for every signal $\vct{y} \in \Cspace{N}$ and every positive integer $t$, we have
\begin{equation*} \label{eqn:heads-tails}
\enorm{ \vct{y} - \vct{y}_t } \leq \frac{1}{2\sqrt{t}} \pnorm{1}{\vct{y}}.
\end{equation*}
Choosing $\vct{y} = \vct{x} - \vct{x}_{s/2}$ and $t = s/2$, we reach 
\begin{equation} \label{eqn:unrecov-l1}
\nu 
	\leq \frac{1.71}{\sqrt{s}} \pnorm{1}{ \vct{x} - \vct{x}_{s/2} }
	+ \enorm{\vct{e}}.
\end{equation}
In words, the unrecoverable energy is controlled by the scaled $\ell_1$ norm of the signal tail.  
\end{rem}

\subsection{Implementation and Resource Requirements}
	\label{sec:implementation}

\cosamp\ was designed to be a practical method for signal recovery.  An efficient implementation of the algorithm requires 
some ideas from numerical linear algebra, as well as some basic techniques from the theory of algorithms.  This section discusses the key issues and develops an analysis of the running time for the two most common scenarios.

We focus on the least-squares problem in the estimation step because it is the major obstacle to a fast implementation of the algorithm.  The algorithm guarantees that the matrix $\Fee_T$ never has more than $3s$ columns, so our assumption $\delta_{4s} \leq 0.1$ implies that the matrix $\Fee_T$ is extremely well conditioned.  As a result, we can apply the pseudoinverse $\Fee_T^\psinv = (\Fee_T^\adj \Fee_T)^{-1} \Fee_T^\adj$ very quickly using an iterative method, such as Richardson's iteration~\cite[Sec.~7.2.3]{Bjo96:Numerical-Methods} or conjugate gradient~\cite[Sec.~7.4]{Bjo96:Numerical-Methods}.  These techniques have the additional advantage that they only interact with the matrix $\Fee_T$ through its action on vectors.  It follows that the algorithm performs better when the sampling matrix has a fast matrix--vector multiply.

Section~\ref{sec:iterative-ls} contains an analysis of the performance of iterative least-squares algorithms in the context of \cosamp.  In summary, if we initialize the least-squares method with the current approximation $\vct{a}^{k-1}$, then the cost of solving the least-squares problem is $\bigO( \coll{L} )$, where $\coll{L}$ bounds the cost of a matrix--vector multiply with $\Fee_T$ or $\Fee_T^\adj$.  This implementation ensures that Theorem~\ref{thm:cosamp-invar} holds at each iteration.

We emphasize that direct methods for least squares are likely to be extremely inefficient in this setting.  The first reason is that each least-squares problem may contain substantially different sets of columns from $\Fee$.  As a result, it becomes necessary to perform a completely new {\sf QR} or {\sf SVD} factorization during each iteration at a cost of $\bigO(s^2 m)$.  The second problem is that computing these factorizations typically requires direct access to the columns of the matrix, which is problematic when the matrix is accessed through its action on vectors.  Third, direct methods have storage costs $\bigO(sm)$, which may be deadly for large-scale problems.


The remaining steps of the algorithm are standard.  Let us estimate the operation counts.

\begin{description} \setlength{\itemsep}{0.3pc}
\item	[Proxy]
	Forming the proxy is dominated by the cost of the matrix--vector multiply $\Fee^\adj \vct{v}$.

\item	[Identification]
	We can locate the largest $2s$ entries of a vector in time $\bigO(N)$ using
the approach in \cite[Ch.~9]{CLRS01:Intro-Algorithms}.  In practice, it may be faster to sort the entries of the signal in decreasing order of magnitude at cost $\bigO(N \log N)$ and then select the first $2s$ of them.  The latter procedure can be accomplished with quicksort, mergesort, or heapsort~\cite[Sec.~II]{CLRS01:Intro-Algorithms}.  To implement the algorithm to the letter, the sorting method needs to be stable because we stipulate that ties are broken lexicographically. This point is not important in practice.

\item	[Support Merger]
	We can merge two sets of size $\bigO(s)$ in expected time $\bigO(s)$ using randomized hashing methods~\cite[Ch.~11]{CLRS01:Intro-Algorithms}.  One can also sort both sets first and use the elementary merge procedure~\cite[p.~29]{CLRS01:Intro-Algorithms} for a total cost $\bigO(s \log s)$.

\item	[LS estimation]
	We use Richardson's iteration or conjugate gradient to compute $\Fee_T^\psinv \vct{u}$.  Initializing the least-squares algorithm requires a matrix--vector multiply with $\Fee_T^\adj$.  Each iteration of the least-squares method requires one matrix--vector multiply each with $\Fee_T$ and $\Fee_T^\adj$.  Since $\Fee_T$ is a submatrix of $\Fee$, the matrix--vector multiplies can also be obtained from multiplication with the full matrix.  We prove in Section~\ref{sec:iterative-ls} that a constant number of least-squares iterations suffices for Theorem~\ref{thm:cosamp-invar} to hold.
	

\item	[Pruning]
	This step is similar to identification.  Pruning can be implemented in time $\bigO(s)$, but it may be preferable to sort the components of the vector by magnitude and then select the first $s$ at a cost of $\bigO(s\log s)$.

\item	[Sample Update]
	This step is dominated by the cost of the multiplication of $\Fee$ with the $s$-sparse vector $\vct{a}^k$.
\end{description}

\noindent
Table~\ref{tab:runtime} summarizes this discussion in two particular cases.  The first column shows what happens when the sampling matrix $\Fee$ is applied to vectors in the standard way, but we have random access to submatrices.  The second, column shows what happens when the sampling matrix $\Fee$ and its adjoint $\Fee^\adj$ both have a fast multiply with cost $\coll{L}$, where we assume that $\coll{L} \geq N$.  A typical value is $\coll{L} = \bigO(N \log N)$.  In particular, a partial Fourier matrix satisfies this bound.

\begin{table}[thb]
\centering
\renewcommand{\arraystretch}{1.25}
\caption{Operation count for \cosamp.  Big-O notation is omitted for legibility.  The dimensions of the sampling matrix  $\Fee$ are $m \times N$; the sparsity level is $s$.  The number $\coll{L}$ bounds the cost of a matrix--vector multiply with $\Fee$ or $\Fee^\adj$.}  
	\label{tab:runtime}
\vspace{1pc}
\begin{tabular}{|l||r|r|}
\hline
Step				& Standard multiply		& Fast multiply \\
\hline\hline
Form proxy			& $mN$ 					& $\coll{L}$ \\
Identification		& $N$					& $N$ \\
Support merger		& $s$					& $s$ \\
LS estimation		& $sm$ 					& $\coll{L}$ \\
Pruning				& $s$ 					& $s$ \\
Sample update			& $sm$					& $\coll{L}$ \\
\hline
Total per iteration & $\bigO(mN)$ 			& $\bigO(\coll{L})$ \\
\hline
\end{tabular}
\vspace{1pc}
\end{table}

Finally, we note that the storage requirements of the algorithm are also favorable.  Aside from the storage required by the sampling matrix, the algorithm constructs only one vector of length $N$, the signal proxy.  The sample vectors $\vct{u}$ and $\vct{v}$ have length $m$, so they require $\bigO(m)$ storage.  The signal approximations can be stored using sparse data structures, so they require at most $\bigO(s \log N)$ storage.  Similarly, the index sets that appear require only $\bigO(s \log N)$ storage.  The total storage is $\bigO(N)$.

The following result summarizes this discussion.

\begin{thm}[Resource Requirements] \label{thm:resources}
Each iteration of \cosamp\ requires $\bigO(\coll{L})$ time, where $\coll{L}$ bounds the cost of a multiplication with the matrix $\Fee$ or $\Fee^\adj$.  The algorithm uses storage $\bigO(N)$.
\end{thm}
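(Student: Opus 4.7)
The proof is essentially a bookkeeping argument that adds up the costs of the six substeps of a single iteration (proxy, identification, support merger, LS estimation, pruning, sample update), with the only nontrivial input being a bound on the number of inner least-squares iterations. The plan is to treat each substep in turn, invoke the estimates already gathered in Section~\ref{sec:implementation}, and then collect the dominant terms.

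First, I would address the easy substeps. Forming the proxy $\vct{y} = \Fee^\adj \vct{v}$ is, by definition, one multiplication by $\Fee^\adj$, costing $\bigO(\coll{L})$. Identifying the $2s$ largest-magnitude entries of $\vct{y} \in \Cspace{N}$ can be carried out in time $\bigO(N)$ by a linear-time selection algorithm, and since we assume $\coll{L} \geq N$ this is absorbed into $\bigO(\coll{L})$. Merging the support sets $\Omega$ and $\supp{\vct{a}^{k-1}}$, each of size at most $2s$, costs $\bigO(s)$ (expected) by hashing or $\bigO(s \log s)$ deterministically; in either case, since $s \leq N \leq \coll{L}$, the cost is $\bigO(\coll{L})$. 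Pruning to the $s$ largest entries of $\vct{b}$, a vector supported on a set of size at most $3s$, is again $\bigO(s)$. Finally, updating the samples by computing $\vct{u} - \Fee \vct{a}^{k}$ requires one multiply with $\Fee$ applied to an $s$-sparse vector plus a subtraction in $\Cspace{m}$, for a total of $\bigO(\coll{L})$.

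The main obstacle is the estimation step, $\vct{b}\restrict{T} \leftarrow \Fee_T^\psinv \vct{u}$. Here a naive direct factorization would cost $\bigO(s^2 m)$ per iteration and demand random access to the columns of $\Fee$, neither of which fits the $\bigO(\coll{L})$ budget. The plan is to appeal to the analysis promised in Section~\ref{sec:iterative-ls}: because the restricted isometry hypothesis $\delta_{4s} \leq 0.1$ ensures that $\Fee_T^\adj \Fee_T$ is uniformly well-conditioned (with $|T| \leq 3s$), Richardson's iteration (or conjugate gradient) warm-started at $\vct{a}^{k-1}$ converges at a fixed geometric rate. Consequently a constant number of inner iterations suffices for the estimate of $\Fee_T^\psinv \vct{u}$ that Theorem~\ref{thm:cosamp-invar} requires; each such inner iteration is a single application of $\Fee_T$ and $\Fee_T^\adj$, which can be realized through multiplies by the full $\Fee$ and $\Fee^\adj$ at cost $\bigO(\coll{L})$ apiece. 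Summing over the constant number of inner iterations keeps this step at $\bigO(\coll{L})$, and summing over the six substeps yields the per-iteration bound $\bigO(\coll{L})$.

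For the storage claim, I would enumerate the objects the algorithm has to keep in memory between iterations: one dense proxy vector $\vct{y} \in \Cspace{N}$ of size $\bigO(N)$; the sample vectors $\vct{u}, \vct{v} \in \Cspace{m}$ of size $\bigO(m) \leq \bigO(N)$; the current approximation $\vct{a}^{k-1}$ and the estimate $\vct{b}$, which are $s$- and $3s$-sparse respectively and can be stored in sparse form using $\bigO(s \log N)$ bits; and the index sets $\Omega, T$, likewise of size $\bigO(s \log N)$. The sampling matrix itself is treated as an oracle and excluded from the working-storage count, as is standard. Adding these contributions gives $\bigO(N)$ total working storage, completing the proof of Theorem~\ref{thm:resources}.
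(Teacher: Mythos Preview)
Your proposal is correct and follows essentially the same approach as the paper: the theorem is stated there as a summary of the step-by-step discussion in Section~\ref{sec:implementation}, which enumerates the cost of each substep (proxy, identification, support merger, LS estimation, pruning, sample update) and the storage of each data structure, with the only nontrivial ingredient being the appeal to Section~\ref{sec:iterative-ls} for the constant bound on the number of inner least-squares iterations. Your bookkeeping matches the paper's Table~\ref{tab:runtime} and the surrounding text almost line for line.
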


\begin{rem}
We have been able to show that Theorem~\ref{thm:cosamp-count} holds when the least-squares problems are solved iteratively with a delicately chosen stopping threshold.  In this case, the total number of least-squares iterations performed over the entire execution of the algorithm is at most $\bigO( \log(\enorm{\vct{x}} / \eta ) )$ if we wish to achieve error $\bigO( \eta + \nu )$.  When the cost of forming the signal proxy is much higher than the cost of solving the least-squares problem, this analysis may yield a sharper result.  For example, using standard matrix--vector multiplication, we have a runtime bound
$$
\bigO( s \cdot mN + \log(\enorm{\vct{x}} / \eta) \cdot sm ).
$$
The first term reflects the number of \cosamp\ iterations times the cost of forming the signal proxy.  The second term reflects the total cost of the least-squares iterations.  Unless the relative precision $\enorm{\vct{x}}/\eta$ is superexponential in the signal length, we obtain running time $\bigO( smN )$.  This bound is comparable with the worst-case cost of OMP or ROMP.  As we discuss in Appendix \ref{app:count-sparse}, the number of \cosamp\ iterations may be much smaller than $s$, which also improves the estimate.
\end{rem}

\subsection{Proof of Theorem~\ref{thm:cosamp}} \label{sec:thm-A}

We have now collected all the material we need to establish the main result.
Fix a precision parameter $\eta$.  After at most $\bigO( \log(\enorm{\vct{x}} / \eta ) )$ iterations, \cosamp\ produces an $s$-sparse approximation $\vct{a}$ that satisfies
$$
\enorm{ \vct{x} - \vct{a} } \leq \cnst{C} \cdot (\eta + \nu)
$$
in consequence of Theorem~\ref{thm:cosamp-invar}.  Apply inequality \eqref{eqn:unrecov-l1} to bound the unrecoverable energy $\nu$ in terms of the $\ell_1$ norm.  We see that the approximation error satisfies
$$
\enorm{ \vct{x} - \vct{a} } \leq \cnst{C} \cdot \max\left\{ \eta, \frac{1}{\sqrt{s}} \pnorm{1}{ \vct{x} - \vct{x}_{s/2}} + \enorm{\vct{e}} \right\}.
$$
According to Theorem~\ref{thm:resources}, each iteration of \cosamp\ is completed in time $\bigO(\coll{L})$, where $\coll{L}$ bounds the cost of a matrix--vector multiplication with $\Fee$ or $\Fee^\adj$.  The total runtime, therefore, is $\bigO( \coll{L} \log(\enorm{\vct{x}} /\eta) )$.  The total storage is $\bigO(N)$.

In the statement of the theorem, perform the substitution $s/2 \mapsto s$.  Finally, we replace $\delta_{8s}$ with $\delta_{2s}$ by means of Corollary~\ref{cor:dumb-rip-bd}, which states that $\delta_{cr} \leq c \cdot \delta_{2r}$ for any positive integers $c$ and $r$.


\subsection{The Unrecoverable Energy}
	\label{sec:unrecoverable}


Since the unrecoverable energy $\nu$ plays a central role in our analysis of \cosamp, it merits some additional discussion.
In particular, it is informative to examine the unrecoverable energy in a compressible signal.  Let $p$ be a number in the interval $(0,1)$.  We say that $\vct{x}$ is \term{$p$-compressible} with magnitude $R$ if the sorted components of the signal decay at the rate
$$
\abs{x}_{(i)} \leq R \cdot i^{-1/p}
\qquad\text{for $i = 1, 2, 3, \dots$}.
$$
When $p = 1$, this definition implies that $\pnorm{1}{\vct{x}} \leq R \cdot (1 + \log N)$.  Therefore, the unit ball of $1$-compressible signals is similar to the $\ell_1$ unit ball.  When $p \approx 0$, this definition implies that $p$-compressible signals are very nearly sparse.  In general, compressible signals are well approximated by sparse signals:
\begin{align*}
\pnorm{1}{ \vct{x} - \vct{x}_s } &\leq \cnst{C}_p \cdot R \cdot s^{1 - 1/p} \\
\enorm{ \vct{x} - \vct{x}_s } &\leq \cnst{D}_p \cdot R \cdot s^{1/2 - 1/p}
\end{align*}
where $\cnst{C}_p = (1/p - 1)^{-1}$ and $\cnst{D}_p = (2/p - 1)^{-1/2}$.  These results follow by writing each norm as a sum and approximating the sum with an integral.  We see that the unrecoverable energy \eqref{eqn:unrecoverable} in a $p$-compressible signal is bounded as
\begin{equation} \label{eqn:compressible-error}
\nu \leq 2 \cnst{C}_p \cdot R \cdot s^{1/2 - 1/p} + \enorm{\vct{e}}.
\end{equation}
When $p$ is small, the first term in the unrecoverable energy decays rapidly as the sparsity level $s$ increases.  For the class of $p$-compressible signals, the bound \eqref{eqn:compressible-error} on the unrecoverable energy is sharp, modulo the exact values of the constants.

With these inequalities, we can see that \cosamp\ recovers compressible signals efficiently.  Let us calculate the number of iterations required to reduce the approximation error from $\enorm{\vct{x}}$ to the optimal level \eqref{eqn:compressible-error}.  For compressible signals, the energy $\enorm{\vct{x}} \leq 2R$, so
$$
\log\left( \frac{2R}{2\cnst{C}_p \cdot R \cdot s^{1/2 - 1/p}} \right)
	= \log (1/p - 1) + (1/p - 1/2) \log s.
$$
Therefore, the number of iterations required to recover a generic $p$-compressible signal is $\bigO(\log s)$, where the constant in the big-O notation depends on $p$.

The term ``unrecoverable energy'' is justified by several facts.  First, we must pay for the $\ell_2$ error contaminating the samples.  To check this point, define $S = \supp{\vct{x}_s}$.  The matrix $\Fee_S$ is nearly an isometry from $\ell_2^S$ to $\ell_2^m$, so an error in the large components of the signal induces an error of equivalent size in the samples.  Clearly, we can never resolve this uncertainty.

The term $s^{-1/2} \pnorm{1}{\vct{x} - \vct{x}_{s}}$ is also required on account of classical results about the Gel'fand widths of the $\ell_1^N$ ball in $\ell_2^N$, due to Kashin~\cite{Kas77:The-widths} and Garnaev--Gluskin~\cite{GG84:On-widths}.  In the language of compressive sampling, their work has the following interpretation.  Let $\Fee$ be a fixed $m \times N$ sampling matrix.  Suppose that, for every signal $\vct{x} \in \Cspace{N}$, there is an algorithm that uses the samples $\vct{u} = \Fee \vct{x}$ to construct an approximation $\vct{a}$ that achieves the error bound
$$
\enorm{ \vct{x} - \vct{a} }
	\leq \frac{\cnst{C}}{\sqrt{s}} \pnorm{1}{\vct{x}}.
$$
Then the number $m$ of measurements must satisfy
$m \geq \cnst{c} s \log(N/s)$.


\section{Restricted Isometry Consequences}\label{sec:rip}

When the sampling matrix satisfies the restricted isometry inequalities \eqref{eqn:rip}, it has several other properties that we require repeatedly in the proof that the \cosamp\ algorithm is correct.
Our first observation is a simple translation of \eqref{eqn:rip} into other terms.

\begin{prop} \label{prop:rip-basic}
Suppose $\Fee$ has restricted isometry constant $\delta_r$.  Let $T$ be a set of $r$ indices or fewer.  Then 
\begin{align*}
\phantom{\frac{1}{\sqrt{\delta_r}}}
\smnorm{2}{ \Fee_T^\adj \vct{u} }
	&\leq \sqrt{1 + \delta_r} \enorm{ \vct{u} } \\
\phantom{\frac{1}{\sqrt{\delta_r}}}
\smnorm{2}{ \Fee_T^\psinv \vct{u} }
	&\leq \frac{1}{\sqrt{1 - \delta_r}} \enorm{ \vct{u} } \\
\phantom{\frac{1}{\sqrt{\delta_r}}}
\smnorm{2}{ \Fee_T^\adj \Fee_T \vct{x} }
	&\lesseqqgtr (1 \pm \delta_r) \enorm{ \vct{x}} \\
\phantom{\frac{1}{\sqrt{\delta_r}}}
\smnorm{2}{ (\Fee_T^\adj \Fee_T)^{-1} \vct{x} }
	&\lesseqqgtr \frac{1}{1 \pm \delta_r} \enorm{ \vct{x}}.
\end{align*}
where the last two statements contain an upper and lower bound, depending on the sign chosen.
\end{prop}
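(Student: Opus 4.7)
The plan is entirely spectral. First I would rewrite the RIP \eqref{eqn:rip} as a statement about the Gram matrix $\Fee_T^\adj \Fee_T$. Restricting a signal of support contained in $T$ to the coordinates in $T$ preserves $\ell_2$ norm, and $\Fee \vct{z} = \Fee_T \vct{z}\restrict{T}$ for any such $\vct{z}$, so \eqref{eqn:rip} applied to signals supported in $T$ yields
\[
(1 - \delta_r) \enormsq{\vct{y}}
\;\leq\; \smip{\Fee_T^\adj \Fee_T \vct{y}}{\vct{y}}
\;=\; \enormsq{\Fee_T \vct{y}}
\;\leq\; (1 + \delta_r) \enormsq{\vct{y}}
\]
for every $\vct{y} \in \Cspace{T}$. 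By the Courant--Fischer minimax principle, every eigenvalue of the Hermitian matrix $\Fee_T^\adj \Fee_T$ lies in $[1 - \delta_r,\, 1 + \delta_r]$. In particular $\Fee_T^\adj \Fee_T$ is invertible, so $\Fee_T^\psinv = (\Fee_T^\adj \Fee_T)^{-1} \Fee_T^\adj$ is well defined, and the singular values of $\Fee_T$ lie in $[\sqrt{1 - \delta_r},\, \sqrt{1 + \delta_r}]$.

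From this spectral picture every inequality is immediate. For the third bound, applying the eigenvalue estimate to the Hermitian operator $\Fee_T^\adj \Fee_T$ directly gives $(1 - \delta_r) \enorm{\vct{x}} \leq \smnorm{2}{\Fee_T^\adj \Fee_T \vct{x}} \leq (1 + \delta_r) \enorm{\vct{x}}$ for $\vct{x} \in \Cspace{T}$, and inverting the spectrum (which then lies in $[1/(1+\delta_r),\, 1/(1-\delta_r)]$) yields the fourth. For the first bound, $\Fee_T$ and $\Fee_T^\adj$ share their nonzero singular values, so $\norm{\Fee_T^\adj} = \norm{\Fee_T} \leq \sqrt{1 + \delta_r}$, and the inequality follows on applying this spectral norm to $\vct{u}$. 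For the second, the singular value decomposition shows that the nonzero singular values of $\Fee_T^\psinv$ are the reciprocals of those of $\Fee_T$; hence $\norm{\Fee_T^\psinv} \leq 1/\smin(\Fee_T) \leq 1/\sqrt{1 - \delta_r}$.

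There is no real obstacle here: every step is elementary linear algebra once one has the spectral bounds on $\Fee_T^\adj \Fee_T$. The only point that demands a sentence of care is the transition from the sparsity statement \eqref{eqn:rip}, which quantifies over all $r$-sparse vectors in the ambient space $\Cspace{N}$, to a spectral statement about the smaller operator $\Fee_T \colon \Cspace{T} \to \Cspace{m}$. Once that identification is made, all four inequalities fall out of the same eigenvalue localization.
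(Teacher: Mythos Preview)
Your proof is correct and follows exactly the approach of the paper: the paper's proof simply observes that the restricted isometry inequalities force the singular values of $\Fee_T$ into $[\sqrt{1-\delta_r},\,\sqrt{1+\delta_r}]$ and then appeals to ``standard relationships between the singular values of $\Fee_T$ and the singular values of basic functions of $\Fee_T$.'' You have merely spelled out those standard relationships in detail.
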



\begin{proof}
The restricted isometry inequalities \eqref{eqn:rip} imply that the singular values of $\Fee_T$ lie between $\sqrt{1 - \delta_r}$ and $\sqrt{1 + \delta_r}$. The bounds follow from standard relationships between the singular values of $\Fee_T$ and the singular values of basic functions of $\Fee_T$.
%
\end{proof}

A second consequence is that disjoint sets of columns from the sampling matrix span nearly orthogonal subspaces.  The following result quantifies this observation.

\begin{prop}[Approximate Orthogonality] \label{prop:approx-orth}
Suppose $\Fee$ has restricted isometry constant $\delta_r$.  Let $S$ and $T$ be disjoint sets of indices whose combined cardinality does not exceed $r$.  Then
$$
\norm{ \Fee_S^\adj \Fee_T } \leq \delta_{r}.
$$
\end{prop}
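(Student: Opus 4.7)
The plan is to realize $\Fee_S^\adj \Fee_T$ as an off-diagonal block of the Gram matrix associated with the combined column set, and then use the restricted isometry property to bound that Gram matrix's deviation from the identity.

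First I would set $R = S \cup T$, noting that because $S$ and $T$ are disjoint, $|R| = |S| + |T| \leq r$. Consider the Gram matrix $\mtx{G} = \Fee_R^\adj \Fee_R$. After a suitable reordering of the columns of $\Fee_R$ so that the columns indexed by $S$ come first, $\mtx{G}$ has the block form
$$
\mtx{G} = \begin{pmatrix} \Fee_S^\adj \Fee_S & \Fee_S^\adj \Fee_T \\ \Fee_T^\adj \Fee_S & \Fee_T^\adj \Fee_T \end{pmatrix},
$$
so $\Fee_S^\adj \Fee_T$ is precisely the off-diagonal block in the $(S,T)$ position.

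Next I would invoke the restricted isometry property \eqref{eqn:rip}: since $|R| \leq r$, every eigenvalue of $\mtx{G}$ lies in $[1 - \delta_r, 1 + \delta_r]$, and consequently the Hermitian matrix $\mtx{G} - \Id$ satisfies $\norm{\mtx{G} - \Id} \leq \delta_r$. The diagonal blocks of $\mtx{G} - \Id$ are $\Fee_S^\adj \Fee_S - \Id$ and $\Fee_T^\adj \Fee_T - \Id$, while the off-diagonal block in the $(S,T)$ position is $\Fee_S^\adj \Fee_T$ itself (no identity correction, because $S \cap T = \emptyset$).

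Finally I would use the elementary fact that any off-diagonal block of a matrix has spectral norm no larger than the norm of the whole. Concretely, for any unit vectors $\vct{x}$ supported on $T$ and $\vct{y}$ supported on $S$, padding them with zeros to vectors $\widetilde{\vct{x}}, \widetilde{\vct{y}}$ on $R$ gives $\smabsip{\widetilde{\vct{y}}}{(\mtx{G} - \Id)\widetilde{\vct{x}}} = \smabsip{\vct{y}}{\Fee_S^\adj \Fee_T \vct{x}}$, whence $\norm{\Fee_S^\adj \Fee_T} \leq \norm{\mtx{G} - \Id} \leq \delta_r$. There is no real obstacle here; the only point requiring a moment's care is the block-decomposition argument, which is immediate once one commits to a column ordering and uses the disjointness of $S$ and $T$ to ensure the off-diagonal block contains no identity contribution.
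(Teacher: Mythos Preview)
Your proof is correct and follows essentially the same approach as the paper: you set $R = S \cup T$, recognize $\Fee_S^\adj \Fee_T$ as an off-diagonal submatrix of $\Fee_R^\adj \Fee_R - \Id$, bound the norm of the latter by $\delta_r$ via the eigenvalue estimate from the restricted isometry property, and conclude by the submatrix-norm inequality. The only difference is cosmetic---you spell out the block structure and the unit-vector argument, whereas the paper simply invokes the fact that the spectral norm of a submatrix never exceeds that of the whole matrix.
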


\begin{proof}
Abbreviate $R = S \cup T$, and observe that $\Fee_S^\adj \Fee_T$ is a submatrix of $\Fee_R^\adj \Fee_R - \Id$.  The spectral norm of a submatrix never exceeds the norm of the entire matrix.  We discern that
$$
\norm{ \Fee_S^\adj \Fee_T }
	\leq \norm{ \Fee_R^\adj \Fee_R - \Id }
	\leq \max\{ (1+\delta_r) - 1, 1 - (1 - \delta_r) \}
	= \delta_r
$$
because the eigenvalues of $\Fee_R^\adj \Fee_R$ lie between $1 - \delta_r$ and $1 + \delta_r$.
\end{proof}


This result will be applied through the following corollary.

\begin{cor} \label{cor:cross-corr}
Suppose $\Fee$ has restricted isometry constant $\delta_r$.  Let $T$ be a set of indices, and let $\vct{x}$ be a vector.  Provided that $r \geq \abs{T \cup \supp{\vct{x}}}$,
$$
\enorm{ \Fee_T^\adj \Fee \cdot \vct{x}\restrict{T^c} }
	\leq \delta_{r} \enorm{ \vct{x}\restrict{T^c} }.
$$
\end{cor}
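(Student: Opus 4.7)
The plan is to reduce the corollary directly to Proposition~\ref{prop:approx-orth} by isolating the support of $\vct{x}\restrict{T^c}$ and exploiting the disjointness from $T$. Let me lay out the steps.

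First, I would define $S = \supp{\vct{x}} \setminus T$, so that $S$ is disjoint from $T$ and $\vct{x}\restrict{T^c} = \vct{x}\restrict{S}$ (the components of $\vct{x}$ indexed by coordinates outside $T$ are exactly those indexed by $S$, since coordinates outside $\supp{\vct{x}}$ contribute nothing). Then the product $\Fee \cdot \vct{x}\restrict{T^c}$ only involves columns of $\Fee$ indexed by $S$, which lets me rewrite
$$
\Fee_T^\adj \Fee \cdot \vct{x}\restrict{T^c} = \Fee_T^\adj \Fee_S \cdot \vct{x}\restrict{S},
$$
with $\vct{x}\restrict{S}$ viewed as an element of $\Cspace{S}$.

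Next, I would verify the hypotheses of Proposition~\ref{prop:approx-orth}. Since $T \cap S = \varnothing$ and $T \cup S \subseteq T \cup \supp{\vct{x}}$, the combined cardinality satisfies $\abs{T \cup S} \leq \abs{T \cup \supp{\vct{x}}} \leq r$. The proposition therefore yields $\norm{\Fee_T^\adj \Fee_S} \leq \delta_r$. Applying this operator-norm bound to the vector $\vct{x}\restrict{S}$ gives
$$
\enorm{\Fee_T^\adj \Fee_S \cdot \vct{x}\restrict{S}} \leq \delta_r \enorm{\vct{x}\restrict{S}} = \delta_r \enorm{\vct{x}\restrict{T^c}},
$$
which is the desired conclusion.

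There is no real obstacle here; the whole content of the corollary is the bookkeeping observation that restricting $\vct{x}$ to $T^c$ is the same as restricting it to $S \defby \supp{\vct{x}} \setminus T$, so that the relevant columns of $\Fee$ form a block $\Fee_S$ disjoint from $\Fee_T$ and the proposition applies cleanly.
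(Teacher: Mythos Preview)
Your proof is correct and essentially identical to the paper's own argument: both define $S = \supp{\vct{x}} \setminus T$, observe $\vct{x}\restrict{T^c} = \vct{x}\restrict{S}$, and apply Proposition~\ref{prop:approx-orth} to the disjoint pair $S, T$. If anything, you are slightly more explicit about checking the cardinality hypothesis $\abs{T \cup S} \leq r$, which the paper leaves implicit.
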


\begin{proof}
Define $S = \supp{\vct{x}} \setminus T$, so we have $\vct{x}\restrict{S} = \vct{x}\restrict{T^c}$.  Thus,
$$
\enorm{ \Fee_T^\adj \Fee \cdot \vct{x}\restrict{T^c} }
	= \enorm{ \Fee_T^\adj \Fee \cdot \vct{x}\restrict{S} }
	\leq \norm{ \Fee_T^\adj \Fee_S } \enorm{ \vct{x}\restrict{S} }
	\leq \delta_{r} \enorm{ \vct{x}\restrict{T^c} },
$$
owing to Proposition~\ref{prop:approx-orth}.
\end{proof}

As a second corollary, we show that $\delta_{2r}$ gives weak control over the higher restricted isometry constants.

\begin{cor} \label{cor:dumb-rip-bd}
Let $c$ and $r$ be positive integers.  Then $\delta_{cr} \leq c \cdot \delta_{2r}$.
\end{cor}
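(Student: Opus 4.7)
The plan is to reduce to the definition of the restricted isometry constant $\delta_{cr}$: fix an arbitrary $\vct{x} \in \Cspace{N}$ with $\pnorm{0}{\vct{x}} \leq cr$ and show that $\bigl|\enormsq{\Fee \vct{x}} - \enormsq{\vct{x}}\bigr| \leq c \cdot \delta_{2r} \enormsq{\vct{x}}$. The obvious strategy is to split $\supp{\vct{x}}$ into $c$ disjoint blocks $T_1,\dots,T_c$, each of size at most $r$, and to control the cross-terms by approximate orthogonality (Proposition~\ref{prop:approx-orth}), which is exactly the regime where the hypothesis $\delta_{2r}$ can be invoked on pairs $T_i \cup T_j$ of combined cardinality $\leq 2r$.

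Concretely, let $\vct{x}_i = \vct{x}\restrict{T_i}$, so that $\vct{x} = \sum_{i=1}^c \vct{x}_i$ and, by disjointness, $\enormsq{\vct{x}} = \sum_i \enormsq{\vct{x}_i}$. Expanding the inner product gives
$$
\enormsq{\Fee \vct{x}} - \enormsq{\vct{x}}
	= \sum_{i=1}^c \bigl( \enormsq{\Fee \vct{x}_i} - \enormsq{\vct{x}_i} \bigr)
	+ 2 \sum_{1 \leq i < j \leq c} \real \smip{\Fee \vct{x}_i}{\Fee \vct{x}_j}.
$$
For each diagonal term, $\abs{\enormsq{\Fee \vct{x}_i} - \enormsq{\vct{x}_i}} \leq \delta_r \enormsq{\vct{x}_i} \leq \delta_{2r} \enormsq{\vct{x}_i}$, using monotonicity of the restricted isometry constants. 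For each off-diagonal term, I would rewrite $\smip{\Fee \vct{x}_i}{\Fee \vct{x}_j} = \smip{\vct{x}_i\restrict{T_i}}{\Fee_{T_i}^\adj \Fee_{T_j} \vct{x}_j\restrict{T_j}}$ and apply Proposition~\ref{prop:approx-orth} with $r \rightsquigarrow 2r$ to obtain $\abs{\smip{\Fee \vct{x}_i}{\Fee \vct{x}_j}} \leq \delta_{2r} \enorm{\vct{x}_i} \enorm{\vct{x}_j}$.

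Combining the two estimates via the triangle inequality yields
$$
\bigl| \enormsq{\Fee \vct{x}} - \enormsq{\vct{x}} \bigr|
	\leq \delta_{2r} \Bigl( \sum_i \enormsq{\vct{x}_i} + 2 \sum_{i<j} \enorm{\vct{x}_i} \enorm{\vct{x}_j} \Bigr)
	= \delta_{2r} \Bigl( \sum_i \enorm{\vct{x}_i} \Bigr)^2.
$$
A final application of Cauchy--Schwarz to the sum of $c$ nonnegative terms gives $(\sum_i \enorm{\vct{x}_i})^2 \leq c \sum_i \enormsq{\vct{x}_i} = c \enormsq{\vct{x}}$, which delivers the claim.

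I do not anticipate a serious obstacle here: the only points requiring care are the choice of partition (handled by breaking $\supp{\vct{x}}$ into at most $c$ blocks of size at most $r$, which is always possible since $\pnorm{0}{\vct{x}} \leq cr$) and the observation that pairs $T_i \cup T_j$ have combined cardinality at most $2r$, which is exactly what makes $\delta_{2r}$ the relevant isometry constant for the cross-terms. The Cauchy--Schwarz step is what converts the $c^2$ pairs of cross-terms into the linear factor $c$ in the final bound.
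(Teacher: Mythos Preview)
Your argument is correct. It differs from the paper's proof in its mechanism: the paper fixes an index set $S$ of size $cr$, forms $\mtx{M} = \Fee_S^\adj \Fee_S - \Id$, partitions $\mtx{M}$ into $r \times r$ blocks, and invokes a block version of Gershgorin's theorem to bound $\norm{\mtx{M}}$ by $\max_i \bigl( \norm{\mtx{M}_{ii}} + \sum_{j\neq i} \norm{\mtx{M}_{ij}} \bigr) \leq \delta_r + (c-1)\delta_{2r} \leq c\,\delta_{2r}$. You instead work directly with the quadratic form $\vct{x} \mapsto \enormsq{\Fee\vct{x}} - \enormsq{\vct{x}}$, split $\vct{x}$ rather than the Gram matrix, bound the cross-terms via Proposition~\ref{prop:approx-orth}, and then use Cauchy--Schwarz to collapse $\bigl(\sum_i \enorm{\vct{x}_i}\bigr)^2$ to $c\,\enormsq{\vct{x}}$. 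Your route is more elementary and fully self-contained (the paper appeals to a block Gershgorin disc theorem whose proof it omits), while the paper's spectral-norm argument is slightly sharper on the constants---it yields $\delta_r + (c-1)\delta_{2r}$ before the final simplification---and is phrased directly as an operator-norm bound rather than going through an arbitrary test vector.
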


\begin{proof}
The result is clearly true for $c = 1, 2,$ so we assume $c \geq 3$.  Let $S$ be an arbitrary index set of size $cr$, and let $\mtx{M} = \Fee_S^\adj \Fee_S - \Id$.  It suffices to check that $\norm{ \mtx{M} } \leq c \cdot \delta_{2r}$.  To that end, we break the matrix $\mtx{M}$ into $r \times r$ blocks, which we denote $\mtx{M}_{ij}$.  A block version of Gershgorin's theorem states that $\norm{\mtx{M}}$ satisfies at least one of the inequalities
$$
\abs{ \norm{ \mtx{M} } - \norm{\mtx{M}_{ii}} } \leq \sum\nolimits_{j\neq i} \norm{ \mtx{M}_{ij} }
\qquad\text{where $i = 1, 2, \dots, c$.}
$$
The derivation is entirely analogous with the usual proof of Gershgorin's theorem, so we omit the details.  For each diagonal block, we have $\norm{ \mtx{M}_{ii} } \leq \delta_r$ because of the restricted isometry inequalities \eqref{eqn:rip}.  For each off-diagonal block, we have $\norm{ \mtx{M}_{ij} } \leq \delta_{2r}$ because of Proposition~\ref{prop:approx-orth}.  Substitute these bounds into the block Gershgorin theorem and rearrange to complete the proof.
\end{proof}

Finally, we present a result that measures how much the sampling matrix inflates nonsparse vectors.  This bound permits us to establish the major results for sparse signals and then transfer the conclusions to the general case.

\begin{prop}[Energy Bound] \label{prop:k2-bd}
Suppose that $\Fee$ verifies the upper inequality of \eqref{eqn:rip}, viz.
$$
\enorm{ \Fee \vct{x} } \leq \sqrt{1 + \delta_r} \enorm{ \vct{x} }
\qquad\text{when}\qquad
\pnorm{0}{\vct{x}} \leq r.
$$
Then, for every signal $\vct{x}$,
$$
\enorm{ \Fee \vct{x} } \leq \sqrt{1 + \delta_r}
	\left[ \enorm{ \vct{x} } + \frac{1}{\sqrt{r}}
		\pnorm{1}{\vct{x}} \right].
$$
\end{prop}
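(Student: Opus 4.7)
The plan is to decompose $\vct{x}$ into a sum of disjointly supported $r$-sparse pieces, apply the hypothesis to each piece, and control the tail contribution with the $\ell_1$ norm.

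First I would sort the entries of $\vct{x}$ by magnitude and partition the index set into consecutive blocks $T_1, T_2, T_3, \dots$ of size $r$ (allowing the final block to be smaller), where $T_1$ indexes the $r$ largest-magnitude entries, $T_2$ the next $r$ largest, and so on. Write $\vct{x}_{(j)} = \vct{x}\restrict{T_j}$, so that $\vct{x} = \sum_j \vct{x}_{(j)}$ and each $\vct{x}_{(j)}$ is $r$-sparse. By the triangle inequality followed by the hypothesis applied to each $\vct{x}_{(j)}$,
$$
\enorm{\Fee \vct{x}} \leq \sum_j \enorm{\Fee \vct{x}_{(j)}} \leq \sqrt{1+\delta_r} \sum_j \enorm{\vct{x}_{(j)}}.
$$

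The main step is to bound $\sum_{j \geq 2} \enorm{\vct{x}_{(j)}}$ by $\pnorm{1}{\vct{x}}/\sqrt{r}$. By construction every entry in $T_j$ has magnitude no greater than any entry in $T_{j-1}$, so each coordinate of $\vct{x}_{(j)}$ is bounded above by the average magnitude $\pnorm{1}{\vct{x}_{(j-1)}}/r$ of the previous block. Since $\vct{x}_{(j)}$ has at most $r$ nonzero entries, this yields $\enorm{\vct{x}_{(j)}} \leq \sqrt{r} \cdot \pnorm{1}{\vct{x}_{(j-1)}}/r = \pnorm{1}{\vct{x}_{(j-1)}}/\sqrt{r}$. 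Summing over $j \geq 2$ and telescoping the $\ell_1$ norms gives $\sum_{j \geq 2} \enorm{\vct{x}_{(j)}} \leq \pnorm{1}{\vct{x}}/\sqrt{r}$.

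Finally, the leading block satisfies $\enorm{\vct{x}_{(1)}} \leq \enorm{\vct{x}}$ trivially. Combining these two estimates gives
$$
\enorm{\Fee \vct{x}} \leq \sqrt{1+\delta_r}\left[\enorm{\vct{x}} + \frac{1}{\sqrt{r}}\pnorm{1}{\vct{x}}\right],
$$
as required. There is no real obstacle here; the only thing to be careful about is the indexing in the averaging argument, specifically that one compares block $j$ against block $j-1$ (not against itself), which is precisely what lets the sum over $j \geq 2$ be controlled by the full $\ell_1$ norm of $\vct{x}$ rather than requiring an extra term.
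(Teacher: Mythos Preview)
Your argument is correct, but it takes a genuinely different route from the paper's proof.  You use the standard block decomposition (sort the coordinates, chop into $r$-blocks, and bound each tail block's $\ell_2$ norm by the previous block's $\ell_1$ average), which is the elementary ``shifting'' argument usually attributed to Cand{\`e}s--Tao.  The paper instead gives a convex-geometry proof due to Rudelson: it defines the body $S = \conv\bigl\{\bigcup_{|I|\leq r} B_2^I\bigr\}$ and the body $K = \{\vct{x}:\enorm{\vct{x}}+r^{-1/2}\pnorm{1}{\vct{x}}\leq 1\}$, observes that the hypothesis is exactly $\pnorm{S\to 2}{\Fee}\leq\sqrt{1+\delta_r}$ and the conclusion is exactly $\pnorm{K\to 2}{\Fee}\leq\sqrt{1+\delta_r}$, and then proves the purely geometric inclusion $K\subset S$ by checking the polar inclusion $S^\circ\subset K^\circ$.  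Your proof is shorter and entirely self-contained; the paper's proof, while more abstract, isolates the structural fact $K\subset S$ (which is independent of $\Fee$) and explains the role of the quantity $\enorm{\vct{x}}+r^{-1/2}\pnorm{1}{\vct{x}}$ as a norm whose unit ball sits inside the convex hull of $r$-sparse unit vectors.  One minor wording issue: the step you call ``telescoping'' is really just the additivity $\sum_{j\geq 1}\pnorm{1}{\vct{x}_{(j)}}=\pnorm{1}{\vct{x}}$ over a disjoint partition, not a telescoping sum.
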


\begin{proof}
We repeat the geometric argument of Rudelson that is presented in~\cite{GSTV07:HHS}.

First, observe that the hypothesis of the proposition can be regarded as a statement about the operator norm of $\Fee$ as a map between two Banach spaces.  For a set $I \subset \{1, 2, \dots, N\}$, write $B_2^I$ for the unit ball in $\ell_2(I)$.  Define the convex body
$$
S = \conv\left\{ \bigcup\nolimits_{\abs{I} \leq r} B_2^I \right\}
	\subset \Cspace{N},
$$
and notice that, by hypothesis, the operator norm
$$
\pnorm{S \to 2}{ \Fee } =
	\max_{\vct{x} \in S} \enorm{ \Fee \vct{x} }
	\leq \sqrt{1 + \delta_r}.
$$
Define a second convex body 
$$
K = \left\{ \vct{x} : \enorm{\vct{x}} + \frac{1}{\sqrt{r}} \pnorm{1}{\vct{x}}
	\leq 1 \right\} \subset \Cspace{N},
$$
and consider the operator norm
$$
\pnorm{K \to 2}{ \Fee } =
	\max_{\vct{x} \in K} \enorm{ \Fee \vct{x} }.
$$
The content of the proposition is the claim that
$$
\pnorm{K \to 2}{ \Fee } \leq \pnorm{S \to 2}{ \Fee }.
$$
To establish this point, it suffices to check that $K \subset S$.

Instead, we prove the reverse inclusion for the polars: $S^\circ \subset K^\circ$.  The norm with unit ball $S^\circ$ is calculated as
$$
\pnorm{S^\circ}{ \vct{u} }
	= \max_{\abs{I} \leq r} \enorm{ \vct{u}\restrict{I} }.
$$
Consider a vector $\vct{u}$ in the unit ball $S^\circ$, and let $I$ be a set of $r$ coordinates where $\vct{u}$ is largest in magnitude.  We must have
$$
\pnorm{\infty}{\vct{u}\restrict{I^c}} \leq \frac{1}{\sqrt{r}},
$$
or else $\abs{u_i} > \frac{1}{\sqrt{r}}$ for each $i \in I$.  But then $\pnorm{S^\circ}{ \vct{u} } \geq \enorm{ \vct{u}\restrict{I} } > 1$, a contradiction.  Therefore, we may write
$$
\vct{u} = \vct{u}\restrict{I} + \vct{u}\restrict{I^c}
	\in B_2 + \frac{1}{\sqrt{r}} B_\infty,
$$
where $B_p$ is the unit ball in $\ell_p^N$.  But the set on the right-hand side is precisely the unit ball of $K^\circ$ since
\begin{align*}
\sup\nolimits_{\vct{v} \in K^\circ} \ip{ \vct{x} }{ \vct{v} }
	&= \pnorm{K}{ \vct{x} }
	= \enorm{ \vct{x} } + \frac{1}{\sqrt{r}} \pnorm{1}{\vct{x}} \\
	&= \sup\nolimits_{\vct{v} \in B_2} \ip{ \vct{x} }{ \vct{v} }
		+ \sup\nolimits_{\vct{w} \in \frac{1}{\sqrt{r}} B_\infty} \ip{ \vct{x} }{ \vct{w} }
	= \sup\nolimits_{\vct{v} \in B_2 + \frac{1}{\sqrt{r}} B_\infty} \ip{ \vct{x} }{ \vct{v} }.
\end{align*}
In summary, $S^\circ \subset K^\circ$.
\end{proof}





\section{The Iteration Invariant: Sparse Case}\label{sec:invar-sparse}

We now commence the proof of Theorem~\ref{thm:cosamp-invar}.  For the moment, let us assume that the signal is actually sparse.  Section~\ref{sec:cosamp-pf} removes this assumption.



The result states that each iteration of the algorithm reduces the approximation error by a constant factor, while adding a small multiple of the noise.  As a consequence, when the approximation error is large in comparison with the noise, the algorithm makes substantial progress in identifying the unknown signal.

\begin{thm}[Iteration Invariant: Sparse Case]
	\label{thm:invar-sparse}
Assume that $\vct{x}$ is $s$-sparse.  For each $k \geq 0$, the signal approximation $\vct{a}^k$ is $s$-sparse, and
$$
\smnorm{2}{ \vct{x} - \vct{a}^{k+1} }
	\leq 0.5 \smnorm{2}{ \vct{x} - \vct{a}^{k} } + 7.5 \enorm{ \vct{e} }.
$$
In particular,
$$
\smnorm{2}{ \vct{x} - \vct{a}^{k} }
	\leq 2^{-k} \enorm{ \vct{x} } + 15 \enorm{ \vct{e} }.
$$
\end{thm}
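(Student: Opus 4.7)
The plan is to prove the iteration invariant by induction on $k$, tracking the error through each of the four substantive steps of a single CoSaMP iteration (identification, support merger, estimation, pruning). The base case $k=0$ is immediate since $\vct{a}^0 = \vct{0}$. For the inductive step, set $\vct{r} = \vct{x} - \vct{a}^{k-1}$; because $\vct{x}$ and $\vct{a}^{k-1}$ are both $s$-sparse, $\vct{r}$ is $2s$-sparse, and the current samples obey $\vct{v} = \vct{u} - \Fee\vct{a}^{k-1} = \Fee\vct{r} + \vct{e}$, which is crucial because it reduces the analysis to studying how the algorithm recovers the $2s$-sparse residual from the slightly noisier samples $\vct{v}$.

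I would first prove an \emph{identification lemma}: the set $\Omega = \supp{\vct{y}_{2s}}$ from the proxy $\vct{y} = \Fee^\adj\vct{v}$ captures almost all energy of $\vct{r}$, i.e.\ $\enorm{\vct{r}\restrict{\Omega^c}} \leq \alpha\enorm{\vct{r}} + \beta\enorm{\vct{e}}$ for small $\alpha$ (proportional to $\delta_{4s}$). The argument compares $\vct{y}$ on the missed support $S = \supp{\vct{r}}\setminus\Omega$ with $\vct{y}$ on the ``extra'' set $S' \subset \Omega \setminus \supp{\vct{r}}$ of the same cardinality: since $\Omega$ is chosen as the top $2s$ indices of $\vct{y}$, $\enorm{\vct{y}\restrict{S}} \leq \enorm{\vct{y}\restrict{S'}}$. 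On $S$ we write $\vct{y}\restrict{S} = \vct{r}\restrict{S} + ((\Fee^\adj\Fee - \Id)\vct{r})\restrict{S} + (\Fee^\adj\vct{e})\restrict{S}$, while on $S'$ the ``true'' part vanishes. Combining with Corollary~\ref{cor:cross-corr} (whose supports are of total size at most $4s$) and Proposition~\ref{prop:rip-basic} yields the bound. The \emph{support merger} is then trivial: since $T = \Omega \cup \supp{\vct{a}^{k-1}}$ contains $\supp{\vct{a}^{k-1}}$, we have $\vct{a}^{k-1}\restrict{T^c}=\vct{0}$, hence $\vct{x}\restrict{T^c} = \vct{r}\restrict{T^c}$, and because $T \supseteq \Omega$, $\enorm{\vct{x}\restrict{T^c}} \leq \enorm{\vct{r}\restrict{\Omega^c}}$.

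Next comes an \emph{estimation lemma}. Writing $\vct{u} = \Fee_T\vct{x}\restrict{T} + \Fee\vct{x}\restrict{T^c} + \vct{e}$ and applying $\Fee_T^\psinv$ gives
\[
\vct{b}\restrict{T} - \vct{x}\restrict{T} = \Fee_T^\psinv\Fee\vct{x}\restrict{T^c} + \Fee_T^\psinv\vct{e} = (\Fee_T^\adj\Fee_T)^{-1}\Fee_T^\adj\Fee\vct{x}\restrict{T^c} + \Fee_T^\psinv\vct{e}.
\]
Corollary~\ref{cor:cross-corr} (applied with $r = |T\cup\supp{\vct{x}}| \leq 4s$) controls the first term by $\tfrac{\delta_{4s}}{1-\delta_{3s}}\enorm{\vct{x}\restrict{T^c}}$, and Proposition~\ref{prop:rip-basic} controls the second by $(1-\delta_{3s})^{-1/2}\enorm{\vct{e}}$. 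Since $\vct{b}$ vanishes off $T$, $\enormsq{\vct{b}-\vct{x}} = \enormsq{\vct{b}\restrict{T}-\vct{x}\restrict{T}} + \enormsq{\vct{x}\restrict{T^c}}$, producing an error of the form $C_1\enorm{\vct{x}\restrict{T^c}} + C_2\enorm{\vct{e}}$. For the \emph{pruning} step, because $\vct{a}^k = \vct{b}_s$ is the best $s$-sparse approximation of $\vct{b}$ and $\vct{x}$ is itself $s$-sparse, the triangle inequality gives $\enorm{\vct{x}-\vct{a}^k} \leq 2\enorm{\vct{x}-\vct{b}}$.

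Chaining these four bounds yields $\enorm{\vct{x}-\vct{a}^k} \leq \rho\enorm{\vct{x}-\vct{a}^{k-1}} + \gamma\enorm{\vct{e}}$, and the hypothesis $\delta_{4s}\leq 0.1$ is precisely what is needed to force $\rho \leq 0.5$ and $\gamma \leq 7.5$. The iterated bound then follows by unrolling and summing the geometric series $\sum_{j\geq 0} 0.5^j = 2$, giving the claimed $15\enorm{\vct{e}}$. I expect the main obstacle to be the identification lemma, both because the comparison of $S$ and $S'$ requires care in bookkeeping disjoint/overlapping supports, and because the final contraction factor $\rho$ is a product of factors from identification and estimation---keeping each sufficiently small under the sole hypothesis $\delta_{4s}\leq 0.1$ demands using the tightest available RIP estimates rather than loose ones.
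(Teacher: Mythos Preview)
Your proposal is correct and follows essentially the same four-lemma architecture as the paper's proof (Lemmas~\ref{lem:ident}--\ref{lem:pruning} chained in Section~\ref{sec:pf-invar-sparse}). The only cosmetic differences are that the paper compares the full sets $R\setminus\Omega$ and $\Omega\setminus R$ (via $\enorm{\vct{y}\restrict{R}}\leq\enorm{\vct{y}\restrict{\Omega}}$, then cancelling the intersection) rather than picking an equal-cardinality $S'$, and that in the estimation step the paper uses the triangle inequality $\enorm{\vct{x}-\vct{b}}\leq\enorm{\vct{x}\restrict{T^c}}+\enorm{\vct{x}\restrict{T}-\vct{b}\restrict{T}}$ rather than your Pythagorean split; both variants yield the same constants under $\delta_{4s}\leq 0.1$.
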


\noindent
The argument proceeds in a sequence of short lemmas, each corresponding to one step in the algorithm.  Throughout this section, we retain the assumption that $\vct{x}$ is $s$-sparse.





\subsection{Approximations, Residuals, etc.}


Fix an iteration $k \geq 1$.  We write $\vct{a} = \vct{a}^{k-1}$ for the signal approximation at the beginning of the iteration.  Define the residual $\vct{r} = \vct{x} - \vct{a}$, which we interpret as the part of the signal we have not yet recovered.  Since the approximation $\vct{a}$ is always $s$-sparse, the residual $\vct{r}$ must be $2s$-sparse.  Notice that the vector $\vct{v}$ of updated samples can be viewed as noisy samples of the residual:
$$
\vct{v} \defby \vct{u} - \Fee \vct{a} = \Fee (\vct{x} - \vct{a}) + \vct{e}
	= \Fee \vct{r} + \vct{e}.
$$

\subsection{Identification}

The identification phase produces a set of components where the residual signal still has a lot of energy.  

\begin{lemma}[Identification] \label{lem:ident}
The set $\Omega = \supp{ \vct{y}_{2s} }$ contains at most $2s$ indices, and
$$
\enorm{ \vct{r}\restrict{\Omega^c} }
	\leq 0.2223 \enorm{ \vct{r} } + 2.34 \enorm{\vct{e}}.
$$
\end{lemma}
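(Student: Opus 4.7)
The plan is to set $R = \supp{\vct{r}}$ (so $|R| \leq 2s$ since $\vct{r} = \vct{x} - \vct{a}$ is a difference of $s$-sparse vectors) and $T = R \setminus \Omega$, so that $\vct{r}\restrict{\Omega^c} = \vct{r}\restrict{T}$. The plan then has three movements: harvest one combinatorial inequality from the definition of $\Omega$, upper bound $\enorm{\vct{y}\restrict{\Omega \setminus R}}$ and lower bound $\enorm{\vct{y}\restrict{R \setminus \Omega}}$ using the RIP consequences of Section~\ref{sec:rip}, and then arithmetic.

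The combinatorial step is the starting point. Because $\Omega$ consists of indices of the $2s$ largest entries of $\vct{y}$ and $|R| \leq 2s$, we have $\enorm{\vct{y}\restrict{\Omega}} \geq \enorm{\vct{y}\restrict{R}}$. Subtracting the common part $\enorm{\vct{y}\restrict{\Omega \cap R}}^2$ from both squared sides gives the cleaner statement
$$
\enorm{ \vct{y}\restrict{\Omega \setminus R} } \geq \enorm{ \vct{y}\restrict{R \setminus \Omega} }.
$$
This is the only piece of information we will use about how $\Omega$ was chosen; everything else is restricted isometry.

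For the upper bound on the left side, observe that $S := \Omega \setminus R$ is disjoint from $R = \supp{\vct{r}}$ and $|S| + |R| \leq 4s$. Writing $\vct{y} = \Fee^\adj \Fee \vct{r} + \Fee^\adj \vct{e}$ and restricting to $S$, Proposition~\ref{prop:approx-orth} applied to $\Fee_S^\adj \Fee_R$ and Proposition~\ref{prop:rip-basic} applied to $\Fee_S^\adj \vct{e}$ together give
$$
\enorm{\vct{y}\restrict{\Omega \setminus R}} \leq \delta_{4s} \enorm{\vct{r}} + \sqrt{1 + \delta_{2s}} \, \enorm{\vct{e}}.
$$
For the lower bound on the right, decompose $\vct{r} = \vct{r}\restrict{T} + \vct{r}\restrict{T^c}$, noting that $\supp{\vct{r}\restrict{T^c}} \subset R \cap \Omega$ is disjoint from $T$ with total size at most $|R| \leq 2s$. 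Then Proposition~\ref{prop:rip-basic} applied to $\Fee_T^\adj \Fee_T \vct{r}\restrict{T}$, Corollary~\ref{cor:cross-corr} applied to the cross term with $r = 2s$, and Proposition~\ref{prop:rip-basic} for the noise term give, via the reverse triangle inequality,
$$
\enorm{\vct{y}\restrict{R \setminus \Omega}} \geq (1 - \delta_{2s}) \enorm{\vct{r}\restrict{T}} - \delta_{2s} \enorm{\vct{r}\restrict{T^c}} - \sqrt{1 + \delta_{2s}}\, \enorm{\vct{e}}.
$$

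Combining the three inequalities, using $\enorm{\vct{r}\restrict{T^c}} \leq \enorm{\vct{r}}$, and isolating $\enorm{\vct{r}\restrict{T}}$ yields
$$
(1-\delta_{2s}) \enorm{\vct{r}\restrict{T}} \leq (\delta_{4s} + \delta_{2s}) \enorm{\vct{r}} + 2\sqrt{1+\delta_{2s}}\, \enorm{\vct{e}}.
$$
Plugging in $\delta_{2s} \leq \delta_{4s} \leq 0.1$ gives coefficients $0.2/0.9 \approx 0.2223$ and $2\sqrt{1.1}/0.9 \approx 2.34$, matching the stated constants. There is no real obstacle beyond bookkeeping; the only subtle point is making sure that every set on which we invoke restricted isometry has size at most $4s$, which is tight because $\Omega$ and $R$ can each have size up to $2s$ and we really do need the cross-block $\Fee_{\Omega \setminus R}^\adj \Fee_R$, whose total column count is as large as $4s$.
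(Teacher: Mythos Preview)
Your proof is correct and follows essentially the same route as the paper: the same combinatorial inequality $\enorm{\vct{y}\restrict{R\setminus\Omega}} \leq \enorm{\vct{y}\restrict{\Omega\setminus R}}$, the same RIP-based upper and lower bounds on the two sides (via Proposition~\ref{prop:rip-basic}, Proposition~\ref{prop:approx-orth}/Corollary~\ref{cor:cross-corr}), and the same final rearrangement with $\delta_{2s}\leq\delta_{4s}\leq 0.1$. The only cosmetic difference is that you name $T=R\setminus\Omega$ and write $\vct{r}\restrict{T^c}$ where the paper writes $\vct{r}\restrict{\Omega}$; these are the same vector since $\vct{r}$ is supported on $R$.
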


\begin{proof}
The identification phase forms a proxy $\vct{y} = \Fee^\adj \vct{v}$ for the residual signal.  The algorithm then selects a set $\Omega$ of $2s$ components from $\vct{y}$ that have the largest magnitudes.  The goal of the proof is to show that the energy in the residual on the set $\Omega^c$ is small in comparison with the total energy in the residual.

Define the set $R = \supp{\vct{r}}$.  Since $R$ contains at most $2s$ elements, our choice of $\Omega$ ensures that $\enorm{ \vct{y}\restrict{R} } \leq \enorm{ \vct{y}\restrict{\Omega} }$.  By squaring this inequality and canceling the terms in $R \cap \Omega$, we discover that
$$
\enorm{ \vct{y}\restrict{R \setminus \Omega} }
	\leq \enorm{ \vct{y}\restrict{\Omega \setminus R} }.
$$
Since the coordinate subsets here contain few elements, we can use the restricted isometry constants to provide bounds on both sides.

First, observe that the set $\Omega \setminus R$ contains at most $2s$ elements.  Therefore, we may apply Proposition~\ref{prop:rip-basic} and Corollary~\ref{cor:cross-corr} to obtain
\begin{align*}
\enorm{ \vct{y}\restrict{\Omega \setminus R} }
	&= \smnorm{2}{ \Fee_{\Omega \setminus R}^\adj (\Fee \vct{r} + \vct{e}) } \\
	&\leq \smnorm{2}{ \Fee_{\Omega \setminus R}^\adj \Fee \vct{r} }
		+ \smnorm{2}{ \Fee_{\Omega \setminus R}^\adj \vct{e} } \\
	&\leq \delta_{4s} \enorm{ \vct{r} }
		+ \sqrt{1 + \delta_{2s}} \enorm{ \vct{e} }.
\end{align*}
Likewise, the set $R \setminus \Omega$ contains $2s$ elements or fewer, so Proposition~\ref{prop:rip-basic} and Corollary~\ref{cor:cross-corr} yield
\begin{align*}
\enorm{ \vct{y}\restrict{R \setminus \Omega} }
	&= \smnorm{2}{ \Fee_{R \setminus \Omega}^\adj (\Fee \vct{r} + \vct{e}) } \\
	&\geq \smnorm{2}{ \Fee_{R \setminus \Omega}^\adj \Fee
			\cdot \vct{r}\restrict{R \setminus \Omega} }
		- \smnorm{2}{ \Fee_{R \setminus \Omega}^\adj \Fee
			\cdot \vct{r}\restrict{\Omega} }
		- \smnorm{2}{ \Fee_{R \setminus \Omega}^\adj \vct{e} } \\
	&\geq (1 - \delta_{2s}) \smnorm{2}{ \vct{r}\restrict{R \setminus \Omega} }
		- \delta_{2s} \enorm{ \vct{r} }
		- \sqrt{1 + \delta_{2s}} \enorm{ \vct{e} }. 
\end{align*}
Since the residual is supported on $R$, we can rewrite $\vct{r}\restrict{R \setminus \Omega} = \vct{r}\restrict{\Omega^c}$.  Finally, combine the last three inequalities and rearrange to obtain
$$
\enorm{ \vct{r}\restrict{\Omega^c} }
	\leq \frac{ (\delta_{2s} + \delta_{4s}) \enorm{ \vct{r} }
		+ 2 \sqrt{1 + \delta_{2s}} \enorm{ \vct{e} } }
		{ 1 - \delta_{2s} }.
$$
Invoke the numerical hypothesis that $\delta_{2s} \leq \delta_{4s} \leq 0.1$ to complete the argument.
\end{proof}

\subsection{Support Merger}

The next step of the algorithm merges the support of the current signal approximation $\vct{a}$ with the newly identified set of components.  The following result shows that components of the signal $\vct{x}$ outside this set have very little energy.

\begin{lemma}[Support Merger] \label{lem:merger}
Let $\Omega$ be a set of at most $2s$ indices.  The set $T = \Omega \cup \supp{\vct{a}}$ contains at most $3s$ indices, and
$$
\enorm{ \vct{x}\restrict{T^c} }
	\leq \enorm{ \vct{r}\restrict{\Omega^c} }.
$$
\end{lemma}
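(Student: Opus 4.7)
The proof splits naturally into two independent claims: the cardinality bound on $T$ and the norm inequality. Both should be elementary, so the main question is just how cleanly to present them.

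For the cardinality, I would simply use subadditivity of cardinality under union together with the standing fact that the current approximation $\vct{a} = \vct{a}^{k-1}$ is $s$-sparse (this is part of the inductive hypothesis in Theorem~\ref{thm:invar-sparse}). Then $\abs{T} \leq \abs{\Omega} + \abs{\supp{\vct{a}}} \leq 2s + s = 3s$.

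For the norm bound, the key observation is that $T$ was built to contain $\supp{\vct{a}}$, so on the complement $T^c$ the approximation vanishes: $\vct{a}\restrict{T^c} = \vct{0}$. Since $\vct{r} = \vct{x} - \vct{a}$, this gives $\vct{x}\restrict{T^c} = \vct{r}\restrict{T^c}$ coordinate-wise, hence $\enorm{\vct{x}\restrict{T^c}} = \enorm{\vct{r}\restrict{T^c}}$. Then, because $T \supseteq \Omega$ implies $T^c \subseteq \Omega^c$, restricting $\vct{r}$ to the smaller set $T^c$ can only reduce its $\ell_2$ norm, yielding $\enorm{\vct{r}\restrict{T^c}} \leq \enorm{\vct{r}\restrict{\Omega^c}}$. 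Chaining the equality and inequality closes the lemma.

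There is no real obstacle here — the result is essentially a bookkeeping statement whose role in the larger argument is to pass from the identification bound (controlling $\vct{r}\restrict{\Omega^c}$) to a bound on the signal energy missed by the merged support $T$, which is what the subsequent least-squares estimation step needs. The only thing worth being careful about is not conflating $\vct{x}$ (sparse by assumption in this section) with $\vct{r}$; the identity $\vct{x}\restrict{T^c} = \vct{r}\restrict{T^c}$ holds regardless, but the clean statement of the lemma is phrased in terms of $\vct{x}$ so that it can be reused in Section~\ref{sec:cosamp-pf} when the sparsity assumption on $\vct{x}$ is dropped.
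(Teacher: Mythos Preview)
Your proof is correct and matches the paper's argument essentially line for line: the paper also uses $\supp{\vct{a}} \subset T$ to get $\enorm{\vct{x}\restrict{T^c}} = \enorm{(\vct{x}-\vct{a})\restrict{T^c}} = \enorm{\vct{r}\restrict{T^c}}$, then invokes $T^c \subset \Omega^c$ for the final inequality. The cardinality bound is handled exactly as you describe.
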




\begin{proof}
Since $\supp{\vct{a}} \subset T$, we find that
$$
\enorm{ \vct{x}\restrict{T^c} }
	= \enorm{ (\vct{x} - \vct{a})\restrict{T^c} }
	= \enorm{ \vct{r}\restrict{T^c} }
	\leq \enorm{ \vct{r}\restrict{\Omega^c} },
$$
where the inequality follows from the containment $T^c \subset \Omega^c$.
\end{proof}

\subsection{Estimation}

The estimation step of the algorithm solves a least-squares problem to obtain values for the coefficients in the set $T$.  We need a bound on the error of this approximation.

\begin{lemma}[Estimation] \label{lem:estimation}
Let $T$ be a set of at most $3s$ indices, and define the least-squares signal estimate $\vct{b}$ by the formulae
$$
\vct{b}\restrict{T} = \Fee_T^\psinv \vct{u}
\qquad\text{and}\qquad
\vct{b}\restrict{T^c} = \vct{0}.
$$
Then
$$
\enorm{ \vct{x} - \vct{b} }
	\leq 1.112 \enorm{ \vct{x}\restrict{T^c} } + 1.06 \enorm{\vct{e}}.
$$
\end{lemma}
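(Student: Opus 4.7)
The plan is to decompose the error into pieces supported on $T$ and $T^c$, and then analyze each separately using the restricted isometry consequences from Section~\ref{sec:rip}.

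First, since $\vct{b}\restrict{T^c} = \vct{0}$, the error vector $\vct{x} - \vct{b}$ splits orthogonally into a piece supported on $T$ and a piece supported on $T^c$, so by the Pythagorean identity
\begin{equation*}
\enormsq{ \vct{x} - \vct{b} }
    = \enormsq{ \vct{x}\restrict{T} - \vct{b}\restrict{T} }
    + \enormsq{ \vct{x}\restrict{T^c} }.
\end{equation*}
The second summand is already in the desired form, so the work is to control $\enorm{ \vct{x}\restrict{T} - \vct{b}\restrict{T} }$.

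Next, I would use the sampling model $\vct{u} = \Fee \vct{x} + \vct{e}$ to write $\vct{u} = \Fee_T \, \vct{x}\restrict{T} + \Fee \, \vct{x}\restrict{T^c} + \vct{e}$. Since $|T| \leq 3s$ and $\delta_{4s} \leq 0.1$, Proposition~\ref{prop:rip-basic} guarantees that $\Fee_T$ has full column rank, so $\Fee_T^\psinv \Fee_T$ acts as the identity on vectors supported in $T$. Therefore
\begin{equation*}
\vct{b}\restrict{T} - \vct{x}\restrict{T}
    = \Fee_T^\psinv \bigl( \Fee \, \vct{x}\restrict{T^c} \bigr)
    + \Fee_T^\psinv \vct{e}.
\end{equation*}
Applying the triangle inequality, I bound the two pieces separately. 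For the noise piece, Proposition~\ref{prop:rip-basic} gives $\enorm{ \Fee_T^\psinv \vct{e} } \leq (1 - \delta_{3s})^{-1/2} \enorm{ \vct{e} }$. For the first piece, rewrite $\Fee_T^\psinv = (\Fee_T^\adj \Fee_T)^{-1} \Fee_T^\adj$ and note that $|T \cup \supp{\vct{x}}| \leq 4s$ since $\vct{x}$ is $s$-sparse; Corollary~\ref{cor:cross-corr} then yields $\enorm{ \Fee_T^\adj \Fee \, \vct{x}\restrict{T^c} } \leq \delta_{4s} \enorm{ \vct{x}\restrict{T^c} }$, and Proposition~\ref{prop:rip-basic} controls $(\Fee_T^\adj \Fee_T)^{-1}$ by $(1 - \delta_{3s})^{-1}$. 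Together these give
\begin{equation*}
\enorm{ \vct{x}\restrict{T} - \vct{b}\restrict{T} }
    \leq \frac{\delta_{4s}}{1 - \delta_{3s}} \enorm{ \vct{x}\restrict{T^c} }
    + \frac{1}{\sqrt{1 - \delta_{3s}}} \enorm{ \vct{e} }.
\end{equation*}

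Finally, I substitute back into the Pythagorean identity and use the bound $\delta_{3s} \leq \delta_{4s} \leq 0.1$ to evaluate the constants, giving $\delta_{4s}/(1-\delta_{3s}) \leq 1/9$ and $(1-\delta_{3s})^{-1/2} \leq \sqrt{10/9}$. A routine numerical check shows that $\sqrt{\enormsq{\vct{x}\restrict{T^c}} + (\tfrac{1}{9} \enorm{\vct{x}\restrict{T^c}} + \sqrt{10/9}\,\enorm{\vct{e}})^2} \leq 1.112 \enorm{ \vct{x}\restrict{T^c} } + 1.06 \enorm{ \vct{e} }$ by comparing squared quantities coefficient by coefficient. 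The only real subtlety is keeping track of which restricted isometry constant governs which step---$\delta_{3s}$ for operations involving $\Fee_T$ alone and $\delta_{4s}$ for the cross-correlation---but since $\delta_{4s} \leq 0.1$ dominates, the constants can safely be absorbed into this single bound.
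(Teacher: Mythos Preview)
Your proof is correct and follows essentially the same route as the paper: decompose the error into its $T$ and $T^c$ pieces, expand $\vct{b}\restrict{T}$ via the sampling model and $\Fee_T^\psinv \Fee_T = \Id_T$, then bound the cross term with Corollary~\ref{cor:cross-corr} and the inverse/pseudoinverse terms with Proposition~\ref{prop:rip-basic}. The only cosmetic difference is that you combine the two pieces with the Pythagorean identity while the paper uses the triangle inequality $\enorm{\vct{x}-\vct{b}} \leq \enorm{\vct{x}\restrict{T^c}} + \enorm{\vct{x}\restrict{T} - \vct{b}\restrict{T}}$; your route is slightly sharper but requires the coefficient-by-coefficient check at the end, whereas the paper reads off $1 + \delta_{4s}/(1-\delta_{3s}) \leq 10/9$ and $(1-\delta_{3s})^{-1/2} \leq \sqrt{10/9}$ directly.
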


This result assumes that we solve the least-squares problem in infinite precision.  In practice, the right-hand side of the bound contains an extra term owing to the error from the iterative least-squares solver.  In Section~\ref{sec:iterative-ls}, we study how many iterations of the least-squares solver are required to make the least-squares error negligible in the present argument.

\begin{proof}
Note first that
$$
\enorm{ \vct{x} - \vct{b} }
	\leq \enorm{ \vct{x}\restrict{T^c} }
		+ \enorm{ \vct{x}\restrict{T} - \vct{b}\restrict{T} }.
$$
Using the expression $\vct{u} = \Fee\vct{x} + \vct{e}$ and the fact $\Fee_T^\psinv \Fee_T = \Id_T$, we calculate that
\begin{align*}
\enorm{ \vct{x}\restrict{T} - \vct{b}\restrict{T} }
	&= \smnorm{2}{ \vct{x}\restrict{T} - \Fee_T^\psinv(\Fee \cdot \vct{x}\restrict{T} + \Fee \cdot \vct{x}\restrict{T^c} + \vct{e}) } \\
	&=  \smnorm{2}{ \Fee_T^\psinv (\Fee \cdot \vct{x}\restrict{T^c}
		+ \vct{e}) } \\
	&\leq \smnorm{2}{ (\Fee_T^\adj \Fee_T)^{-1} \Fee_T^\adj \Fee \cdot \vct{x}\restrict{T^c} } + \smnorm{2}{ \Fee_T^\psinv \vct{e} }.
\end{align*}
The cardinality of $T$ is at most $3s$, and $\vct{x}$ is $s$-sparse, so Proposition~\ref{prop:rip-basic} and Corollary~\ref{cor:cross-corr} imply that
\begin{align*}
\enorm{ \vct{x}\restrict{T} - \vct{b}\restrict{T} }
	&\leq \frac{1}{1 - \delta_{3s}} \enorm{ \Fee_T^\adj \Fee \cdot \vct{x}\restrict{T^c} } + \frac{1}{\sqrt{1 - \delta_{3s}}} \enorm{ \vct{e}} \\
	&\leq \frac{\delta_{4s}}{1 - \delta_{3s}}
		\enorm{ \vct{x}\restrict{T^c} }
		+ \frac{\enorm{\vct{e}}}{\sqrt{1 - \delta_{3s}}}.
\end{align*}
Combine the bounds to reach
$$
\enorm{ \vct{x} - \vct{b} } \leq
	\left[ 1 + \frac{\delta_{4s}}{1 - \delta_{3s}} \right]
		\enorm{ \vct{x}\restrict{T^c} }
	+ \frac{\enorm{\vct{e}}}{\sqrt{1 - \delta_{3s}}}.
$$
Finally, invoke the hypothesis that $\delta_{3s} \leq \delta_{4s} \leq 0.1$.
\end{proof}

\subsection{Pruning}

The final step of each iteration is to prune the intermediate approximation to its largest $s$ terms.  The following lemma provides a bound on the error in the pruned approximation.

\begin{lemma}[Pruning] \label{lem:pruning}
The pruned approximation $\vct{b}_s$ satisfies
$$
\enorm{ \vct{x} - \vct{b}_s }
	\leq 2 \enorm{ \vct{x} - \vct{b} }.
$$
\end{lemma}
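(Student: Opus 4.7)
The plan is to exploit the defining property of $\vct{b}_s$ as the best $s$-sparse approximation to $\vct{b}$ in $\ell_2$ norm (as recalled in the Notation subsection), together with the hypothesis carried through this section that $\vct{x}$ itself is $s$-sparse.

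First I would observe that since $\vct{x}$ is $s$-sparse, it is an admissible competitor in the best $s$-sparse approximation problem for $\vct{b}$, so by optimality of $\vct{b}_s$,
\[
\enorm{\vct{b} - \vct{b}_s} \leq \enorm{\vct{b} - \vct{x}}.
\]
Then a single application of the triangle inequality gives
\[
\enorm{\vct{x} - \vct{b}_s} \leq \enorm{\vct{x} - \vct{b}} + \enorm{\vct{b} - \vct{b}_s} \leq 2\enorm{\vct{x} - \vct{b}},
\]
which is the claim. There is no real obstacle here; the lemma is essentially a routine consequence of the best-approximation property and the triangle inequality, and its role is simply to convert the estimation bound of Lemma~\ref{lem:estimation} (which controls $\enorm{\vct{x}-\vct{b}}$) into a bound on the pruned iterate $\vct{b}_s = \vct{a}^k$, at the modest cost of a factor of $2$. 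The only point worth flagging is that the argument relies crucially on $\vct{x}$ being exactly $s$-sparse, which is why this lemma lives in the sparse case; extension to general signals in Section~\ref{sec:cosamp-pf} will require replacing $\vct{x}$ by its best $s$-sparse proxy $\vct{x}_s$ and absorbing the tail into the unrecoverable energy $\nu$.
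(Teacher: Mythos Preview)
Your proof is correct and is essentially identical to the paper's own argument: triangle inequality followed by the observation that the $s$-sparse vector $\vct{x}$ is a competitor in the best $s$-sparse approximation of $\vct{b}$, whence $\enorm{\vct{b}-\vct{b}_s}\leq\enorm{\vct{b}-\vct{x}}$. Your added remarks about the role of the lemma and the reliance on sparsity of $\vct{x}$ are accurate as well.
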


\begin{proof}
The intuition is that $\vct{b}_s$ is close to $\vct{b}$, which is close to $\vct{x}$.  Rigorously,
$$
\enorm{ \vct{x} - \vct{b}_s }
	\leq \enorm{ \vct{x} - \vct{b} } + \enorm{ \vct{b} - \vct{b}_s }
	\leq 2 \enorm{ \vct{x} - \vct{b} }.
$$
The second inequality holds because $\vct{b}_s$ is the best $s$-sparse approximation to $\vct{b}$.  In particular, the $s$-sparse vector $\vct{x}$ is a worse approximation.
\end{proof}

\subsection{Proof of Theorem~\ref{thm:invar-sparse}}
	\label{sec:pf-invar-sparse}

We now complete the proof of the iteration invariant for sparse signals, Theorem~\ref{thm:invar-sparse}.  At the end of an iteration, the algorithm forms a new approximation $\vct{a}^{k} = \vct{b}_s$, which is evidently $s$-sparse.  Applying the lemmas we have established, we easily bound the error:
\begin{align*}
\smnorm{2}{ \vct{x} - \vct{a}^k }
	&= \enorm{ \vct{x} - \vct{b}_s } \\
	&\leq 2 \enorm{ \vct{x} - \vct{b} }
		&& \text{Pruning (Lemma \ref{lem:pruning})} \\
	&\leq 2 \cdot ( 1.112 \enorm{ \vct{x}\restrict{T^c} }
		+ 1.06\enorm{\vct{e}} )
		&& \text{Estimation (Lemma \ref{lem:estimation})} \\
	&\leq 2.224 \enorm{ \vct{r}\restrict{\Omega^c} } + 2.12 \enorm{\vct{e}}
		&& \text{Support merger (Lemma \ref{lem:merger})} \\
	&\leq 2.224 \cdot ( 0.2223 \enorm{ \vct{r} } + 2.34 \enorm{\vct{e}} )
		+ 2.12 \enorm{\vct{e}}
		&& \text{Identification (Lemma \ref{lem:ident})} \\
	&< 0.5 \enorm{ \vct{r} } + 7.5 \enorm{\vct{e}} \\
	&= 0.5 \smnorm{2}{ \vct{x} - \vct{a}^{k-1} } + 7.5 \enorm{\vct{e}}.
\end{align*}
To obtain the second bound in Theorem~\ref{thm:invar-sparse}, simply solve the error recursion and note that
$$
(1 + 0.5 + 0.25 + \dots) \cdot 7.5 \enorm{\vct{e}} \leq 15 \enorm{\vct{e}}.
$$
This point completes the argument.

\section{Analysis of Iterative Least-squares}
	\label{sec:iterative-ls}

To develop an efficient implementation of \cosamp, it is critical to use an iterative method when we solve the least-squares problem in the estimation step.  The two natural choices are Richardson's iteration and conjugate gradient.  The efficacy of these methods rests on the assumption that the sampling operator has small restricted isometry constants.  Indeed, since the set $T$ constructed in the support merger step contains at most $3s$ components, the hypothesis $\delta_{4s} \leq 0.1$ ensures that the condition number
$$
\kappa( \Fee_T^\adj \Fee_T )
	= \frac{\lambda_{\max}(\Fee_T^\adj \Fee_T)}{\lambda_{\min}(\Fee_T^\adj \Fee_T)}
	\leq \frac{1 + \delta_{3s}}{1 - \delta_{3s}}
	< 1.223.
$$
This condition number is closely connected with the performance of Richardson's iteration and conjugate gradient.  In this section, we show that Theorem~\ref{thm:invar-sparse} holds if we perform a constant number of iterations of either least-squares algorithm.

\subsection{Richardson's Iteration}

For completeness, let us explain how Richardson's iteration can be applied to solve the least-squares problems that arise in \cosamp.  Suppose we wish to compute $\mtx{A}^\psinv \vct{u}$ where $\mtx{A}$ is a tall, full-rank matrix.  Recalling the definition of the pseudoinverse, we realize that this amounts to solving a linear system of the form
$$
(\mtx{A}^\adj \mtx{A}) \vct{b} = \mtx{A}^\adj \vct{u}.
$$
This problem can be approached by \term{splitting} the Gram matrix:
$$
\mtx{A}^\adj \mtx{A} = \Id + \mtx{M}
$$
where $\mtx{M} = \mtx{A}^\adj \mtx{A} - \Id$.  Given an initial iterate $\vct{z}^0$, Richardon's method produces  the subsequent iterates via the formula
$$
\vct{z}^{\ell+1} = \mtx{A}^\adj \vct{u} - \mtx{M} \vct{z}^{\ell}.
$$
Evidently, this iteration requires only matrix--vector multiplies with $\mtx{A}$ and $\mtx{A}^\adj$.  It is worth noting that Richardson's method can be accelerated \cite[Sec.~7.2.5]{Bjo96:Numerical-Methods}, but we omit the details.

It is quite easy to analyze Richardson's iteration~\cite[Sec.~7.2.1]{Bjo96:Numerical-Methods}.  Observe that
$$
\smnorm{2}{ \vct{z}^{\ell+1} - \mtx{A}^\psinv \vct{u} }
	= \smnorm{2}{ \mtx{M} ( \vct{z}^\ell - \mtx{A}^{\psinv} \vct{u}) }
	\leq \norm{ \mtx{M} } \smnorm{2}{ \vct{z}^\ell - \mtx{A}^{\psinv} \vct{u} }.
$$
This recursion delivers
$$
\smnorm{2}{ \vct{z}^{\ell} - \mtx{A}^\psinv \vct{u} }
	\leq \norm{ \mtx{M} }^\ell \smnorm{2}{ \vct{z}^0 - \mtx{A}^\psinv \vct{u} }
\qquad\text{for $\ell = 0, 1, 2, \dots$.}
$$
In words, the iteration converges linearly.

In our setting, $\mtx{A} = \Fee_T$ where $T$ is a set of at most $3s$ indices.  Therefore, the restricted isometry inequalities \eqref{eqn:rip} imply that
$$
\norm{ \mtx{M} } = \norm{ \Fee_T^\adj \Fee_T - \Id } \leq \delta_{3s}.
$$
We have assumed that $\delta_{3s} \leq \delta_{4s} \leq 0.1$, which means that the iteration converges quite fast.  Once again, the restricted isometry behavior of the sampling matrix plays an essential role in the performance of the \cosamp\ algorithm.

Conjugate gradient provides even better guarantees for solving the least-squares problem, but it is somewhat more complicated to describe and rather more difficult to analyze.  We refer the reader to \cite[Sec.~7.4]{Bjo96:Numerical-Methods} for more information.  The following lemma summarizes the behavior of both Richardson's iteration and conjugate gradient in our setting.

\begin{lemma}[Error Bound for LS]
	\label{lem:ls-error}
Richardson's iteration produces a sequence $\{ \vct{z}^\ell \}$ of iterates that satisfy
$$
\smnorm{2}{ \vct{z}^\ell - \Fee_T^\psinv \vct{u} }
	\leq 0.1^\ell \cdot \smnorm{2}{ \vct{z}^0 - \Fee_T^\psinv \vct{u} }
\qquad\text{for $\ell = 0, 1, 2, \dots$}. 
$$ 
Conjugate gradient produces a sequence of iterates that satisfy
$$
\smnorm{2}{ \vct{z}^\ell - \Fee_T^\psinv \vct{u} }
	\leq 2 \cdot \rho^\ell \cdot \smnorm{2}{ \vct{z}^0 - \Fee_T^\psinv \vct{u} }
\qquad\text{for $\ell = 0, 1, 2, \dots$}. 
$$
where
$$
\rho = \frac{ \sqrt{\kappa(\Fee_T^\adj \Fee_T)} - 1 }{\sqrt{\kappa(\Fee_T^\adj \Fee_T)} + 1} \leq 0.072.
$$
\end{lemma}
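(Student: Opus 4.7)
The Richardson bound is essentially already derived in the paragraphs preceding the lemma, so my plan is mostly to record the constants. Setting $\mtx{A} = \Fee_T$ and $\mtx{M} = \Fee_T^\adj \Fee_T - \Id$, the contraction identity $\vct{z}^{\ell+1} - \Fee_T^\psinv \vct{u} = -\mtx{M}(\vct{z}^\ell - \Fee_T^\psinv \vct{u})$ (obtained by subtracting the fixed-point equation $\Fee_T^\psinv \vct{u} = \mtx{A}^\adj \vct{u} - \mtx{M} \Fee_T^\psinv \vct{u}$ from the update rule) gives the linear recursion with rate $\norm{\mtx{M}}$. Because $T$ has at most $3s$ elements, the restricted isometry inequalities \eqref{eqn:rip} imply $\norm{\mtx{M}} = \norm{\Fee_T^\adj \Fee_T - \Id} \leq \delta_{3s} \leq \delta_{4s} \leq 0.1$, and unrolling yields the stated geometric decay with ratio $0.1$.

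For conjugate gradient, the plan is to invoke the textbook convergence estimate for CG applied to the normal equations $(\Fee_T^\adj \Fee_T)\vct{b} = \Fee_T^\adj \vct{u}$ (see e.g.\ Bj\"ork, Sec.~7.4), which states that the iterates satisfy
\[
\smnorm{2}{ \vct{z}^\ell - \Fee_T^\psinv \vct{u} }
\leq 2 \rho^\ell \smnorm{2}{ \vct{z}^0 - \Fee_T^\psinv \vct{u} },
\qquad
\rho = \frac{\sqrt{\kappa(\Fee_T^\adj \Fee_T)} - 1}{\sqrt{\kappa(\Fee_T^\adj \Fee_T)} + 1}.
\]
So the work reduces to a numerical bound on $\rho$. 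By Proposition~\ref{prop:rip-basic}, the eigenvalues of $\Fee_T^\adj \Fee_T$ lie in $[1 - \delta_{3s}, 1 + \delta_{3s}]$, hence
\[
\kappa(\Fee_T^\adj \Fee_T) \leq \frac{1 + \delta_{3s}}{1 - \delta_{3s}} \leq \frac{1.1}{0.9} < 1.223,
\]
and plugging $\sqrt{\kappa} < \sqrt{1.223}$ into the expression for $\rho$ yields $\rho < 0.072$ by direct arithmetic.

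The only substantive point beyond bookkeeping is the CG bound itself, which I would simply cite from a standard reference rather than reprove; its derivation via Chebyshev polynomials on the spectrum of $\Fee_T^\adj \Fee_T$ is classical. A minor subtlety is that the textbook inequality is often stated in the $A$-norm associated with $\Fee_T^\adj\Fee_T$ rather than the Euclidean norm, but since this $A$-norm is equivalent to $\enorm{\cdot}$ up to a factor $\sqrt{\kappa} < 1.11$ on both sides, the equivalence factors cancel in the ratio $\smnorm{2}{\vct{z}^\ell - \Fee_T^\psinv \vct{u}}/\smnorm{2}{\vct{z}^0 - \Fee_T^\psinv \vct{u}}$ up to a harmless multiplicative constant absorbed into the leading factor $2$; I would note this equivalence explicitly so the statement of the lemma is justified in the stated form. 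This is the only spot where care is needed, and it is routine once the RIP-based control on the spectrum of $\Fee_T^\adj\Fee_T$ is in hand.
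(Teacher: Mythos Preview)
Your approach is essentially what the paper does: the lemma there is stated without proof, as a summary of the Richardson derivation in the preceding paragraphs together with a citation to Bj\"orck for the conjugate gradient bound, and you recapitulate exactly this.

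One small correction on the $A$-norm point you raise: the equivalence factors do not cancel in the ratio. Passing from the standard $A$-norm CG estimate to the Euclidean norm costs a factor of $\sqrt{\lambda_{\max}/\lambda_{\min}} = \sqrt{\kappa}$, so the leading constant becomes $2\sqrt{\kappa} < 2.22$ rather than $2$. The paper sidesteps this entirely by citing the reference, and the discrepancy is immaterial downstream (in Corollary~\ref{cor:ls-est} three CG iterations still beat the Richardson bound $0.1^3$ comfortably), but your sentence claiming the factors ``cancel'' should be reworded.
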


\subsection{Initialization}

Iterative least-squares algorithms must be seeded with an initial iterate, and their performance depends heavily on a wise selection thereof.  \cosamp\ offers a natural choice for the initializer: the current signal approximation.  As the algorithm progresses, the current signal approximation provides an increasingly good starting point for solving the least-squares problem.


\begin{lemma}[Initial Iterate for LS]
	\label{lem:ls-init}
Let $\vct{x}$ be an $s$-sparse signal with noisy samples $\vct{u} = \Fee \vct{x} + \vct{e}$.  Let $\vct{a}^{k-1}$ be the signal approximation at the end of the $(k-1)$th iteration, and let $T$ be the set of components identified by the support merger. Then
$$
\smnorm{2}{ \vct{a}^{k-1} - \Fee_T^\psinv \vct{u}  }
	\leq 2.112 \smnorm{2}{ \vct{x} - \vct{a}^{k-1} } + 1.06 \enorm{ \vct{e} }
$$
\end{lemma}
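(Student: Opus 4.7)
The plan is to reduce this bound to the estimation lemma (Lemma~\ref{lem:estimation}) by a single application of the triangle inequality. The key observation is that $\Fee_T^\psinv \vct{u}$, viewed as an $N$-vector padded with zeros off $T$, is precisely the vector $\vct{b}$ analyzed in the estimation step. Since $\vct{b}$ is close to $\vct{x}$ and $\vct{x}$ is close to $\vct{a}^{k-1}$, we get that $\vct{a}^{k-1}$ is close to $\Fee_T^\psinv \vct{u}$.

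More concretely, first I would identify $\Fee_T^\psinv \vct{u}$ (a vector on $T$) with its zero-padded extension $\vct{b} \in \Cspace{N}$, so that $\smnorm{2}{\vct{a}^{k-1} - \Fee_T^\psinv \vct{u}} = \enorm{\vct{a}^{k-1} - \vct{b}}$; this is legitimate because the support merger step ensures $\supp{\vct{a}^{k-1}} \subseteq T$, so $\vct{a}^{k-1}$ is also zero off $T$. Then the triangle inequality gives
$$
\enorm{\vct{a}^{k-1} - \vct{b}} \leq \enorm{\vct{a}^{k-1} - \vct{x}} + \enorm{\vct{x} - \vct{b}}.
$$
Apply Lemma~\ref{lem:estimation} (Estimation) to the second term to obtain $\enorm{\vct{x} - \vct{b}} \leq 1.112 \enorm{\vct{x}\restrict{T^c}} + 1.06 \enorm{\vct{e}}$.

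The only remaining task is to replace $\enorm{\vct{x}\restrict{T^c}}$ by $\enorm{\vct{x} - \vct{a}^{k-1}}$. Because $\supp{\vct{a}^{k-1}} \subseteq T$, the vector $\vct{a}^{k-1}$ vanishes on $T^c$, so $\vct{x}\restrict{T^c} = (\vct{x} - \vct{a}^{k-1})\restrict{T^c}$, and hence $\enorm{\vct{x}\restrict{T^c}} \leq \enorm{\vct{x} - \vct{a}^{k-1}}$. Substituting back and collecting the coefficient $1 + 1.112 = 2.112$ yields the claimed bound.

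There is no real obstacle here; the argument is a two-line triangle inequality combined with the estimation lemma. The only subtlety worth flagging is the implicit identification between vectors on $T$ and zero-padded vectors on $\Cspace{N}$, and the accompanying use of $\supp{\vct{a}^{k-1}} \subseteq T$ to transfer the approximation error on $T^c$ into a bound in terms of the full residual $\vct{x} - \vct{a}^{k-1}$. This is precisely why using the previous iterate as the initializer is a sensible choice: the same support-containment that drives the estimation lemma also controls how far the initializer is from the least-squares solution.
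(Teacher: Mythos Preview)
Your proposal is correct and matches the paper's own proof essentially line for line: triangle inequality through $\vct{x}$, invoke Lemma~\ref{lem:estimation} on $\enorm{\vct{x}-\vct{b}}$, then use $\supp{\vct{a}^{k-1}}\subseteq T$ to bound $\enorm{\vct{x}\restrict{T^c}}$ by $\enorm{\vct{x}-\vct{a}^{k-1}}$. The only difference is that you make the identification between $\Fee_T^\psinv\vct{u}$ and its zero-padded extension explicit, which the paper leaves implicit.
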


\begin{proof}
By construction of $T$, the approximation $\vct{a}^{k-1}$ is supported inside $T$, so
$$
\smnorm{2}{ \vct{x}\restrict{T^c} }
	= \smnorm{2}{(\vct{x} - \vct{a}^{k-1})\restrict{T^c} }
	\leq \smnorm{2}{\vct{x} - \vct{a}^{k-1}}.
$$
Using Lemma~\ref{lem:estimation}, we may calculate how far $\vct{a}^{k-1}$ lies from the solution to the least-squares problem.
\begin{align*}
\smnorm{2}{ \vct{a}^{k-1} - \Fee_T^\psinv \vct{u} }
	&\leq \smnorm{2}{ \vct{x} - \vct{a}^{k-1} }
		+ \smnorm{2}{ \vct{x} - \Fee_T^\psinv \vct{u} } \\
	&\leq \smnorm{2}{ \vct{x} - \vct{a}^{k-1} }
		+ 1.112 \smnorm{2}{ \vct{x}\restrict{T^c} }
		+ 1.06 \enorm{ \vct{e} } \\
	&\leq 2.112 \smnorm{2}{ \vct{x} - \vct{a}^{k-1} } + 1.06 \enorm{ \vct{e} }.
\end{align*}
Roughly, the error in the initial iterate is controlled by the current approximation error.
\end{proof}

\subsection{Iteration Count}

We need to determine how many iterations of the least-squares algorithm are required to ensure that the approximation produced is sufficiently good to support the performance of \cosamp.

\begin{cor}[Estimation by Iterative LS]
	\label{cor:ls-est}
Suppose that we initialize the LS algorithm with $\vct{z}^0 = \vct{a}^{k-1}$.  After at most three iterations, both Richardson's iteration and conjugate gradient produce a signal estimate $\vct{b}$ that satisfies
$$
\smnorm{2}{ \vct{x} - \vct{b} } \leq
	1.112 \smnorm{2}{ \vct{x}\restrict{T^c} }
	+ 0.0022 \smnorm{2}{ \vct{x} - \vct{a}^{k-1} } + 1.062 \enorm{ \vct{e} }.
$$
\end{cor}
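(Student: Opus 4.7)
My plan is to bound $\enorm{\vct{x}-\vct{b}}$ by comparing the iterative least-squares output to the exact least-squares solution, and then invoking the contraction rates already established. Let $\vct{b}^\star$ denote the vector with $\vct{b}^\star\restrict{T} = \Fee_T^\psinv \vct{u}$ and $\vct{b}^\star\restrict{T^c} = \vct{0}$, and let $\vct{b}$ be the analogous vector built from the iterate $\vct{z}^\ell$ of the least-squares solver in place of $\Fee_T^\psinv \vct{u}$. The triangle inequality gives
$$
\enorm{\vct{x}-\vct{b}} \;\leq\; \enorm{\vct{x}-\vct{b}^\star} + \enorm{\vct{b}^\star-\vct{b}} \;=\; \enorm{\vct{x}-\vct{b}^\star} + \smnorm{2}{\vct{z}^\ell - \Fee_T^\psinv \vct{u}}.
$$

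For the first term I apply Lemma~\ref{lem:estimation} verbatim, obtaining
$\enorm{\vct{x}-\vct{b}^\star} \leq 1.112\,\enorm{\vct{x}\restrict{T^c}} + 1.06\,\enorm{\vct{e}}$. For the second term I combine Lemma~\ref{lem:ls-init}, which controls the initial gap $\smnorm{2}{\vct{z}^0 - \Fee_T^\psinv \vct{u}}$ in terms of $\smnorm{2}{\vct{x}-\vct{a}^{k-1}}$ and $\enorm{\vct{e}}$, with Lemma~\ref{lem:ls-error}, which contracts that gap by $0.1^\ell$ (Richardson) or $2\rho^\ell$ with $\rho \le 0.072$ (conjugate gradient).

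Taking $\ell = 3$, the contraction factor is at most $0.1^3 = 10^{-3}$ for Richardson and $2\cdot (0.072)^3 \approx 7.5\cdot 10^{-4}$ for conjugate gradient; in either case it is $\leq 10^{-3}$. Multiplying Lemma~\ref{lem:ls-init}'s bound through,
$$
\smnorm{2}{\vct{z}^3 - \Fee_T^\psinv \vct{u}} \;\leq\; 10^{-3}\bigl(2.112\,\smnorm{2}{\vct{x}-\vct{a}^{k-1}} + 1.06\,\enorm{\vct{e}}\bigr).
$$
Adding this to the exact-LS bound and absorbing $1.06 + 1.06\cdot 10^{-3} \leq 1.062$ and $2.112\cdot 10^{-3} \leq 0.0022$ yields the stated inequality.

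There is no real obstacle here; the argument is an arithmetic composition of three previously proved lemmas. The only thing to watch is the bookkeeping: one must be careful that $\vct{b}$ really agrees with $\vct{b}^\star$ off $T$ (both are zero), so that $\enorm{\vct{b}^\star - \vct{b}}$ reduces exactly to the least-squares residual $\smnorm{2}{\vct{z}^\ell - \Fee_T^\psinv \vct{u}}$, and that the chosen iteration count $\ell = 3$ suffices for \emph{both} solvers simultaneously so that the single constant $0.0022$ covers both cases.
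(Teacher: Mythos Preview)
Your proposal is correct and follows essentially the same approach as the paper: apply the triangle inequality to split $\enorm{\vct{x}-\vct{b}}$ into the exact-LS error (handled by Lemma~\ref{lem:estimation}) plus the iterative-LS residual (handled by combining Lemmas~\ref{lem:ls-error} and~\ref{lem:ls-init}), and then check the arithmetic at $\ell=3$. Your write-up is in fact a bit more explicit than the paper's, spelling out why $\enorm{\vct{b}^\star-\vct{b}}$ reduces to $\smnorm{2}{\vct{z}^\ell - \Fee_T^\psinv \vct{u}}$ and verifying the CG contraction factor numerically.
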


\begin{proof}
Combine Lemma~\ref{lem:ls-error} and Lemma~\ref{lem:ls-init} to see that three iterations of Richardson's method yield
$$
\smnorm{2}{ \vct{z}^3 - \Fee_T^\psinv \vct{u}  }
	\leq 0.002112 \smnorm{2}{ \vct{x} - \vct{a}^{k-1} } + 0.00106 \enorm{ \vct{e} }.
$$
The bound for conjugate gradient is slightly better.  Let $\vct{b}\restrict{T} = \vct{z}^3$.  According to the estimation result, Lemma~\ref{lem:estimation}, we have
$$
\smnorm{2}{ \vct{x} - \Fee_T^\psinv \vct{u} } \leq
	1.112 \smnorm{2}{ \vct{x}\restrict{T^c} } + 1.06 \enorm{ \vct{e} }.
$$
An application of the triangle inequality completes the argument.
\end{proof}

\subsection{\cosamp\ with Iterative least-squares}

Finally, we need to check that the sparse iteration invariant, Theorem~\ref{thm:invar-sparse} still holds when we use an iterative least-squares algorithm.

\begin{thm}[Sparse Iteration Invariant with Iterative LS] \label{thm:invar-sparse-ls}
Suppose that we use Richardson's iteration or conjugate gradient for the estimation step, initializing the LS algorithm with the current approximation $\vct{a}^{k-1}$ and performing three LS iterations.  Then Theorem~\ref{thm:invar-sparse} still holds.
\end{thm}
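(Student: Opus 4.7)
The plan is to mimic the derivation at the end of Section~\ref{sec:pf-invar-sparse}, making a single substitution: replace Lemma~\ref{lem:estimation} by its iterative counterpart, Corollary~\ref{cor:ls-est}. None of the other three building blocks (Identification, Support Merger, Pruning) involves the least-squares solver, so they carry over verbatim regardless of how $\vct{b}$ is produced. The task is therefore purely arithmetic: I must check that the slightly worse estimation bound still closes the recursion with contraction factor $0.5$ and noise coefficient $7.5$.

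Setting $\vct{r}=\vct{x}-\vct{a}^{k-1}$, I would chain the four lemmas in the same order as in Section~\ref{sec:pf-invar-sparse}. Pruning (Lemma~\ref{lem:pruning}) yields $\enorm{\vct{x}-\vct{a}^k}\le 2\enorm{\vct{x}-\vct{b}}$; Corollary~\ref{cor:ls-est} then expands this to
$$2\cdot 1.112\,\enorm{\vct{x}\restrict{T^c}} + 2\cdot 0.0022\,\enorm{\vct{r}} + 2\cdot 1.062\,\enorm{\vct{e}}.$$
Lemma~\ref{lem:merger} converts $\enorm{\vct{x}\restrict{T^c}}$ into $\enorm{\vct{r}\restrict{\Omega^c}}$; finally, Lemma~\ref{lem:ident} bounds the latter by $0.2223\enorm{\vct{r}}+2.34\enorm{\vct{e}}$. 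Collecting coefficients produces a $\enorm{\vct{r}}$-coefficient of $2\cdot 1.112\cdot 0.2223+2\cdot 0.0022\approx 0.499$ and an $\enorm{\vct{e}}$-coefficient of $2\cdot 1.112\cdot 2.34+2\cdot 1.062\approx 7.33$, both safely dominated by $0.5$ and $7.5$. The non-recursive estimate in Theorem~\ref{thm:invar-sparse} then follows by summing a geometric series, exactly as before.

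The only delicate point, and the main obstacle if there is one, is the bookkeeping of constants. The original argument already consumes most of its numerical slack: it achieves contraction factor $0.4944$ against a budget of $0.5$, and noise coefficient $7.324$ against a budget of $7.5$. The extra term $0.0022\enorm{\vct{x}-\vct{a}^{k-1}}$ in Corollary~\ref{cor:ls-est} (which traces back to $0.1^3=10^{-3}$ via Lemma~\ref{lem:ls-error} composed with the initialization bound in Lemma~\ref{lem:ls-init}) must be small enough that the contraction factor stays strictly below $0.5$. Three Richardson iterations are essentially the minimum count that preserves the original constants; with two iterations the contraction factor would degrade to about $0.54$ and one would need to retune the invariant, while conjugate gradient would give more comfortable slack. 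Beyond this numerical verification, no new analytic input is required, since Lemmas~\ref{lem:ls-error} and~\ref{lem:ls-init} were designed precisely to make this three-iteration budget work.
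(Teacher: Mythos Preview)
Your proposal is correct and follows exactly the same approach as the paper's proof: replace Lemma~\ref{lem:estimation} by Corollary~\ref{cor:ls-est} in the chain of inequalities from Section~\ref{sec:pf-invar-sparse}, leaving the Pruning, Support Merger, and Identification steps untouched. Your arithmetic matches the paper's, yielding a contraction coefficient of about $0.499$ and a noise coefficient of about $7.33$, both within the budgets of $0.5$ and $7.5$.
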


\begin{proof}
We repeat the calculation in Section~\ref{sec:pf-invar-sparse} using Corollary~\ref{cor:ls-est} instead of the simple estimation lemma.  To that end, recall that the residual $\vct{r} = \vct{x} - \vct{a}^{k-1}$.  Then
\begin{align*}
\smnorm{2}{ \vct{x} - \vct{a}^k }
	&\leq 2 \enorm{ \vct{x} - \vct{b} } \\
	&\leq 2 \cdot ( 1.112 \enorm{ \vct{x}\restrict{T^c} }
		+ 0.0022 \enorm{ \vct{r} } + 1.062 \enorm{\vct{e}} ) \\
	&\leq 2.224 \enorm{ \vct{r}\restrict{\Omega^c} } + 0.0044 \enorm{\vct{r}} + 2.124 \enorm{\vct{e}} \\
	&\leq 2.224 \cdot ( 0.2223 \enorm{ \vct{r} } + 2.34 \enorm{\vct{e}} ) + 0.0044 \enorm{\vct{r}}
		+ 2.124 \enorm{\vct{e}} \\
	&< 0.5 \enorm{ \vct{r} } + 7.5 \enorm{\vct{e}} \\
	&= 0.5 \smnorm{2}{ \vct{x} - \vct{a}^{k-1} } + 7.5 \enorm{\vct{e}}.
\end{align*}
This bound is precisely what is required for the theorem to hold.
\end{proof}

\section{Extension to General Signals}
	\label{sec:cosamp-pf}

In this section, we finally complete the proof of the main result for \cosamp, Theorem~\ref{thm:cosamp-invar}.  The remaining challenge is to remove the hypothesis that the target signal is sparse, which we framed in Theorems~\ref{thm:invar-sparse} and~\ref{thm:invar-sparse-ls}.  Although this difficulty might seem large, the solution is simple and elegant.  It turns out that we can view the noisy samples of a general signal as samples of a sparse signal contaminated with a different noise vector that implicitly reflects the tail of the original signal.



\begin{lemma}[Reduction to Sparse Case] \label{lem:reduction}
Let $\vct{x}$ be an arbitrary vector in $\Cspace{N}$.  The sample vector $\vct{u} = \Fee \vct{x} + \vct{e}$ can also be expressed as $\vct{u} = \Fee \vct{x}_s + \widetilde{\vct{e}}$ where
$$
\enorm{ \widetilde{\vct{e}} } \leq
1.05 \left[ \enorm{ \vct{x} - \vct{x}_s } + \frac{1}{\sqrt{s}}
	\pnorm{1}{ \vct{x} - \vct{x}_s } \right] + \enorm{ \vct{e} }.
$$
\end{lemma}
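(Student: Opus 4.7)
My plan is to choose the natural implicit noise vector
$$\widetilde{\vct{e}} \defby \vct{e} + \Fee(\vct{x} - \vct{x}_s),$$
which immediately gives the identity $\vct{u} = \Fee\vct{x}_s + \widetilde{\vct{e}}$ and reduces the lemma to a norm estimate on $\Fee(\vct{x} - \vct{x}_s)$. By the triangle inequality, it will suffice to show
$$\enorm{\Fee(\vct{x} - \vct{x}_s)} \leq 1.05\left[\enorm{\vct{x} - \vct{x}_s} + \frac{1}{\sqrt{s}}\pnorm{1}{\vct{x} - \vct{x}_s}\right].$$

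The obstacle is that the tail $\vct{x} - \vct{x}_s$ is not sparse, so the restricted isometry inequality \eqref{eqn:rip} does not apply directly. This is exactly the situation that Proposition~\ref{prop:k2-bd} (the Energy Bound) was designed to handle. I would apply that proposition to the vector $\vct{x} - \vct{x}_s$ with parameter $r = s$, which yields
$$\enorm{\Fee(\vct{x} - \vct{x}_s)} \leq \sqrt{1 + \delta_s}\left[\enorm{\vct{x} - \vct{x}_s} + \frac{1}{\sqrt{s}}\pnorm{1}{\vct{x} - \vct{x}_s}\right].$$

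Finally, I would invoke the standing hypothesis $\delta_{4s} \leq 0.1$, which via monotonicity of the restricted isometry constants gives $\delta_s \leq 0.1$, so $\sqrt{1 + \delta_s} \leq \sqrt{1.1} < 1.05$. Combining with $\enorm{\widetilde{\vct{e}}} \leq \enorm{\vct{e}} + \enorm{\Fee(\vct{x} - \vct{x}_s)}$ completes the proof. The whole argument is essentially a one-liner once one knows to apply Proposition~\ref{prop:k2-bd}; the only real content is recognizing that the tail energy, when measured through $\Fee$, is controlled by a mixed $\ell_2 + s^{-1/2}\ell_1$ norm rather than by $\ell_2$ alone.
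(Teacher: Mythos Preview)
Your proposal is correct and matches the paper's proof essentially line for line: the paper also sets $\widetilde{\vct{e}} = \Fee(\vct{x} - \vct{x}_s) + \vct{e}$, applies the triangle inequality together with Proposition~\ref{prop:k2-bd} at level $r = s$, and then invokes $\delta_s \leq \delta_{4s} \leq 0.1$ to bound $\sqrt{1+\delta_s} \leq 1.05$.
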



\begin{proof}
Decompose $\vct{x} = \vct{x}_s + (\vct{x} - \vct{x}_s)$ to obtain $\vct{u} = \Fee \vct{x}_s + \widetilde{\vct{e}}$ where $\widetilde{\vct{e}} = \Fee (\vct{x} - \vct{x}_s) + \vct{e}$.
To compute the size of the error term, we simply apply the triangle inequality and Proposition~\ref{prop:k2-bd}:
$$
\enorm{ \widetilde{\vct{e}} }
	\leq \sqrt{1 + \delta_s} \left[ \enorm{ \vct{x} - \vct{x}_s } + \frac{1}{\sqrt{s}}
		\pnorm{1}{ \vct{x} - \vct{x}_s } \right]
	+ \enorm{ \vct{e} }.
$$
Finally, invoke the fact that $\delta_s \leq \delta_{4s} \leq 0.1$ to obtain $\sqrt{1 + \delta_s} \leq 1.05$.
\end{proof}

This lemma is just the tool we require to complete the remaining argument.

\begin{proof}[Proof of Theorem~\ref{thm:cosamp-invar}]
Let $\vct{x}$ be a general signal, and use Lemma~\ref{lem:reduction} to write the noisy vector of samples $\vct{u} = \Fee \vct{x}_s + \widetilde{\vct{e}}$.  Apply the sparse iteration invariant, Theorem~\ref{thm:invar-sparse}, or the analog for iterative least-squares, Theorem~\ref{thm:invar-sparse-ls}.  We obtain
$$
\smnorm{2}{ \vct{x}_s - \vct{a}^{k+1} }
	\leq 0.5 \smnorm{2}{ \vct{x}_s - \vct{a}^k }
		+ 7.5 \enorm{ \widetilde{\vct{e}}}.
$$
Invoke the lower and upper triangle inequalities to obtain
$$
\smnorm{2}{ \vct{x} - \vct{a}^{k+1} }
	\leq 0.5 \smnorm{2}{ \vct{x} - \vct{a}^k }
		+ 7.5 \enorm{ \widetilde{\vct{e}} }
		+ 1.5 \enorm{ \vct{x} - \vct{x}_s }.
$$
Finally, recall the estimate for $\enorm{\widetilde{\vct{e}}}$ from Lemma~\ref{lem:reduction}, and simplify to reach
\begin{align*}
\smnorm{2}{ \vct{x} - \vct{a}^{k+1} }
	&\leq 0.5 \smnorm{2}{ \vct{x} - \vct{a}^k }
		+ 9.375 \enorm{ \vct{x} - \vct{x}_s }
		+ \frac{7.875}{\sqrt{s}} \pnorm{1}{ \vct{x} - \vct{x}_s }
		+ 7.5 \enorm{ \vct{e} } \\
	&< 0.5 \smnorm{2}{ \vct{x} - \vct{a}^k } + 10 \nu.
\end{align*}
where $\nu$ is the unrecoverable energy \eqref{eqn:unrecoverable}.
\end{proof}

\section{Discussion and Related Work}\label{sec:compare}

\cosamp\ draws on both algorithmic ideas and analytic techniques that have appeared before. This section describes the other major signal recovery algorithms, and it compares them with \cosamp.  It also attempts to trace the key ideas in the  algorithm back to their sources.



\subsection{Algorithms for Compressive Sampling}

We begin with a short discussion of the major algorithmic approaches to signal recovery from compressive samples.  We focus on provably correct methods, although we acknowledge that some \term{ad hoc} techniques provide excellent empirical results.

The initial discovery works on compressive sampling proposed to perform signal recovery by solving a convex optimization problem~\cite{CRT06:Robust-Uncertainty,Don06:Compressed-Sensing}.  Given a sampling matrix $\Fee$ and a noisy vector of samples $\vct{u} = \Fee \vct{x} + \vct{e}$, consider the mathematical program
\begin{equation} \label{eqn:bp}
\min \pnorm{1}{ \vct{y} }
\quad\subjto\quad
\Fee \vct{y} = \vct{u}.
\end{equation}
In words, we look for a signal reconstruction that is consistent with the samples but has minimal $\ell_1$ norm.  The intuition behind this approach is that minimizing the $\ell_1$ norm promotes sparsity, so allows the approximate recovery of compressible signals.  Cand{\`e}s, Romberg, and Tao established in~\cite{CRT06:Stable} that a minimizer $\vct{a}$ of \eqref{eqn:bp} satisfies
\begin{equation} \label{eqn:bp-err}
\enorm{ \vct{x} - \vct{a} } \leq \cnst{C} \left[ \frac{1}{\sqrt{s}} \pnorm{1}{\vct{x} - \vct{x}_s }
	+ \enorm{\vct{e}} \right]
\end{equation}
provided that the sampling matrix $\Fee$ has restricted isometry constant $\delta_{4s} \leq 0.2$.  In \cite{Can08:Restricted-Isometry}, the hypothesis on the restricted isometry constant is sharpened to $\delta_{2s} \leq \sqrt{2} - 1$.  The error bound for \cosamp\ is equivalent, modulo the exact value of the constants.

The literature describes a huge variety of algorithms for solving the optimization problem \eqref{eqn:bp}.  The most common approaches involve interior-point methods~\cite{CRT06:Robust-Uncertainty,KKL+06:Method-Large-Scale}, projected gradient methods~\cite{FNW07:Gradient-Projection}, or iterative thresholding~\cite{DDM04:Iterative-Thresholding} 
The interior-point methods are guaranteed to solve the problem to a fixed precision in time $\bigO( m^2 N^{1.5} )$, where $m$ is the number of measurements and $N$ is the signal length~\cite{NN94:Interior-Point}.  Note that the constant in the big-O notation depends on some of the problem data.  The other convex relaxation algorithms, while sometimes faster in practice, do not currently offer rigorous guarantees.  \cosamp\ provides rigorous bounds on the runtime that are much better than the available results for interior-point methods.

Tropp and Gilbert proposed the use of a greedy iterative algorithm called \term{orthogonal matching pursuit} (OMP) for signal recovery \cite{TG07:Signal-Recovery}.  The algorithm initializes the current sample vector $\vct{v} = \vct{u}$.  In each iteration, it forms the signal proxy $\vct{y} = \Fee^\adj \vct{v}$ and identifies a component of the proxy with largest magnitude.  It adds the new component to the set $T$ of previously identified components.  Then OMP forms a new signal approximation by solving a least-squares problem: $\vct{a} = \Fee_T^\psinv \vct{u}$.  Finally, it updates the samples $\vct{v} = \vct{u} - \Fee \vct{a}$.  These steps are repeated until a halting criterion is satisfied.

Tropp and Gilbert were able to prove a weak result for the performance of OMP \cite{TG07:Signal-Recovery}.  Suppose that $\vct{x}$ is a fixed, $s$-sparse signal, and let $m = \cnst{C} s \log N$.  Draw an $m \times N$ sampling matrix $\Fee$ whose entries are independent, zero-mean subgaussian%
\footnote{A subgaussian random variable $Z$ satisfies $\Prob{ \abs{Z} > t } \leq \cnst{c} \econst^{-\cnst{c}t^2}$ for all $t > 0$.}
random variables with equal variances.  Given noiseless measurements $\vct{u} = \Fee \vct{x}$, OMP reconstructs $\vct{x}$ after $s$ iterations, except with probability $N^{-1}$.  In this setting, OMP must fail for some sparse signals, so it does not provide the same uniform guarantees as convex relaxation.  It is unknown whether OMP succeeds for compressible signals or whether it succeeds when the samples are contaminated with noise.

Donoho et al.~invented another greedy iterative method called \term{stagewise OMP}, or StOMP \cite{DTDS06:Sparse-Solution}.  This algorithm uses the signal proxy to select multiple components at each step, using a rule inspired by ideas from wireless communications.  The algorithm is faster than OMP because of the selection rule, and it sometimes provides good performance, although parameter tuning can be difficult.  There are no rigorous results available for StOMP.

Very recently, Needell and Vershynin developed and analyzed another greedy approach, called \term{regularized OMP}, or ROMP \cite{NV07:Uniform-Uncertainty, NV07:ROMP-Stable}.  This algorithm is similar to OMP but uses a more sophisticated selection rule.  Among the $s$ largest entries of the signal proxy, it identifies the largest subset whose entries differ in magnitude by at most a factor of two.  The work on ROMP represents an advance because the authors establish under restricted isometry hypotheses that their algorithm can approximately recover any compressible signal from noisy samples.  More precisely, suppose that the sampling matrix $\Fee$ has restricted isometry constant $\delta_{8s} \leq 0.01 / \sqrt{\log s}$.   Given noisy samples $\vct{u} = \Fee\vct{x} + \vct{e}$, ROMP produces a $2s$-sparse signal approximation $\vct{a}$ that satisfies
$$
\enorm{ \vct{x} - \vct{a} } \leq \cnst{C} \sqrt{\log s} \left[ \frac{1}{\sqrt{s}}
	\pnorm{1}{ \vct{x} - \vct{x}_s } + \enorm{ \vct{e} } \right].
$$
This result is comparable with the result for convex relaxation, aside from the extra logarithmic factor in the restricted isometry hypothesis and the error bound.  The results for \cosamp\ show that it does not suffer these parasitic factors, so its performance is essentially optimal.

After we initially presented this work, Dai and Milenkovic developed an algorithm called Subspace Pursuit that is very similar to \cosamp.  They established that their algorithm offers performance guarantees analogous with those for \cosamp.  See~\cite{DM08:Subspace-Pursuit} for details.

Finally, we note that there is a class of sublinear algorithms for signal reconstruction from compressive samples.  A sublinear algorithm uses time and space resources that are asymptotically smaller than the length of the signal.  One of the earliest such techniques is the Fourier sampling algorithm of Gilbert et al.~\cite{GGIMS02:Near-Optimal-Sparse,GMS05:Improved}.  This algorithm uses random (but structured) time samples to recover signals that are compressible with respect to the discrete Fourier basis.  Given $s \polylog(N)$ samples%
\footnote{The term $\polylog$ indicates a function that is dominated by a polynomial in the logarithm of its argument.},
Fourier sampling produces a signal approximation $\vct{a}$ that satisfies
$$
\enorm{ \vct{x} - \vct{a} } \leq \cnst{C} \enorm{ \vct{x} - \vct{x}_s }
$$
except with probability $N^{-1}$.  The result for Fourier sampling holds for each signal (rather than for all).  Later, Gilbert et al.~developed two other sublinear algorithms, chaining pursuit~\cite{GSTV07:Algorithmic} and HHS pursuit~\cite{GSTV07:HHS}, that offer uniform guarantees for all signals.  Chaining pursuit has an error bound
$$
\pnorm{1}{ \vct{x} - \vct{a} } \leq \cnst{C} \log N \pnorm{1}{ \vct{x} - \vct{x}_s }
$$
which is somewhat worse than \eqref{eqn:bp-err}.  HHS pursuit achieves the error bound \eqref{eqn:bp-err}.  These methods all require more measurements than the linear and superlinear algorithms (by logarithmic factors), and these measurements must be highly structured.  As a result, the sublinear algorithms may not be useful in practice.

The sublinear algorithms are all combinatorial in nature.  They use ideas from group testing to identify the support of the signal quickly.  There are several other combinatorial signal recovery methods due to Cormode--Muthukrishnan~\cite{CM05:Combinatorial} and Iwen~\cite{I07:sub-linear}.  These algorithms have drawbacks similar to the sublinear approaches.

\subsection{Relative Performance}

Table \ref{tab:comparison} summarizes the relative behavior of these algorithms in terms of the following criteria.
\begin{description} \setlength{\itemsep}{0.5pc}
\item	[General samples]
	Does the algorithm work for a variety of sampling schemes?  Or does it require structured samples?  The designation ``RIP'' means that a bound on a restricted isometry constant suffices. ``Subgauss.'' means that the algorithm succeeds for the class of subgaussian sampling matrices.
	
\item	[Optimal number of samples]
	Can the algorithm recover $s$ sparse signals from $\bigO(s \log N)$ measurements?  Or are its sampling requirements higher (by logarithmic factors)?

\item	[Uniformity]
	Does the algorithm recover all signals given a fixed sampling matrix?  Or do the results require a sampling matrix to be drawn at random for each signal?
	
\item	[Stability]
	Does the algorithm succeed when (a) the signal is compressible but not sparse and (b) when the samples are contaminated with noise?  In most cases, stable algorithms have error bounds similar to \eqref{eqn:bp-err}.  See the discussion above for details.
	
\item	[Running time]
	What is the worst-case cost of the algorithm to recover a real-valued $s$-sparse signal to a fixed relative precision, given a sampling matrix with no special structure?  The designation LP($N$, $m$) indicates the cost of solving a linear program with $N$ variables and $m$ constraints, which is $\bigO(m^2 N^{1.5})$ for an interior-point method.  Note that most of the algorithms can also take advantage of fast matrix--vector multiplies to obtain better running times.
\end{description}

Of the linear and superlinear algorithms, \cosamp\ achieves the best performance on all these metrics.  Although \cosamp\ is slower than the sublinear algorithms, it makes up for this shortcoming by allowing more general sampling matrices and requiring fewer samples.

\begin{table}
\begin{tabular}{|l||r|r|r|r|r|r|}
\hline
					& \cosamp	& OMP			& ROMP		& Convex opt.		& Fourier Samp.	& HHS Pursuit \\
\hline
General samples 	& RIP		& Subgauss. 	& RIP		& RIP			& no			& no	\\
Opt.~\# samples	& yes		& yes			& no		& yes			& no			& no	\\
Uniformity			& yes		& no			& yes		& yes			& no			& yes	\\
Stability			& yes		& ?				& yes		& yes			& yes		& yes	\\
Running time		& $\bigO(mN)$	& $\bigO(smN)$		& $\bigO(smN)$	& LP($N$, $m$)	& $s\polylog(N)$	& ${\rm poly}(s\log N)$ \\
\hline
\end{tabular}
\vspace{1pc}

\caption{Comparison of several signal recovery algorithms.  The notation $s$ refers to the sparsity level; $m$ refers the number of measurements; $N$ refers to the signal length.  See the text for comments on specific designations.}
	\label{tab:comparison}
\end{table}

%



\subsection{Key Ideas}

We conclude with a historical overview of the ideas that inform the \cosamp\ algorithm and its analysis.

The overall greedy iterative structure of \cosamp\ has a long history.  The idea of approaching sparse approximation problems in this manner dates to the earliest algorithms.  In particular, methods for variable selection in regression, such as forward selection and its relatives, all take this form \cite{Mil02:Subset-Selection}. Temlyakov's survey~\cite{Tem02:Nonlinear-Methods} describes the historical role of greedy algorithms in nonlinear approximation.  Mallat and Zhang introduced greedy algorithms into the signal processing literature and proposed the name \term{matching pursuit} \cite{MZ93:Matching-Pursuits}.  Gilbert, Strauss, and their collaborators showed how to incorporate greedy iterative strategies into fast algorithms for sparse approximation problems, and they established the first rigorous guarantees for greedy methods \cite{GGIKMS02:Fast-Small-Space,GGIMS02:Near-Optimal-Sparse}.  Tropp provided a new theoretical analysis of OMP in his work \cite{Tro04:Greed-Good}.  Subsequently, Tropp and Gilbert proved that OMP was effective for compressive sampling \cite{TG07:Signal-Recovery}.

Unlike the simplest greedy algorithms, \cosamp\ identifies many components during each iteration, which allows the algorithm to run faster for many types of signals.  It is not entirely clear where this idea first appeared.  Several early algorithms of Gilbert et al.~incorporate this approach \cite{GMS03:Approximation-Functions,TGMS03:Improved-Sparse}, and it is an essential feature of the Fourier sampling algorithm \cite{GGIMS02:Near-Optimal-Sparse, GMS05:Improved}.  More recent compressive sampling recovery algorithms also select multiple indices, including chaining pursuit~\cite{GSTV07:Algorithmic}, HHS pursuit~\cite{GSTV07:HHS}, StOMP~\cite{DTDS06:Sparse-Solution}, and~ROMP~\cite{NV07:Uniform-Uncertainty}.

\cosamp\ uses the restricted isometry properties of the sampling matrix to ensure that the identification step is successful.  Cand{\`e}s and Tao isolated the restricted isometry conditions in their work on convex relaxation methods for compressive sampling \cite{CT05:Decoding-Linear}.  The observation that restricted isometries can also be used to ensure the success of greedy methods is relatively new.  This idea plays a role in HHS pursuit \cite{GSTV07:HHS}, but it is expressed more completely in the analysis of ROMP \cite{NV07:Uniform-Uncertainty}.


The pruning step of \cosamp\ is essential to maintain the sparsity of the approximation, which is what permits us to use restricted isometries in the analysis of the algorithm.  It also has significant ramifications for the running time because it impacts the speed of the iterative least-squares algorithms.  This technique originally appeared in HHS pursuit~\cite{GSTV07:HHS}.


The iteration invariant, Theorem \ref{thm:cosamp-invar}, states that if the error is large then \cosamp\ makes substantial progress.  This approach to the overall analysis echoes the analysis of other greedy iterative algorithms, including the Fourier sampling method~\cite{GGIMS02:Near-Optimal-Sparse,GMS05:Improved} and HHS Pursuit~\cite{GSTV07:HHS}.

Finally, mixed-norm error bounds, such as that in Theorem~\ref{thm:cosamp}, have become an important feature of the compressive sampling literature.  This idea appears in the work of Cand{\`e}s--Romberg--Tao on convex relaxation \cite{CRT06:Stable}; it is used in the analysis of HHS pursuit \cite{GSTV07:HHS}; it also plays a role in the theoretical treatment of Cohen--Dahmen--DeVore \cite{CDD06:Remarks}.

\subsection*{Acknowledgment}
We would like to thank Martin Strauss for many inspiring discussions.  He is ultimately responsible for many of the ideas in the algorithm and analysis.  We would also like to thank Roman Vershynin for suggestions that drastically simplified the proofs.

\appendix

\section{Algorithmic Variations}
	\label{app:variations}

This appendix describes other possible halting criteria and their consequences.  It also proposes some other variations on the algorithm.

\subsection{Halting Rules}

There are three natural approaches to halting the algorithm.  The first, which we have discussed in the body of the paper, is to stop after a fixed number of iterations.  Another possibility is to use the norm $\enorm{ \vct{v} }$ of the current samples as evidence about the norm $\enorm{\vct{r}}$ of the residual.  A third possibility is to use the magnitude $\infnorm{\vct{y}}$ of the entries of the proxy to bound the magnitude $\infnorm{\vct{r}}$ of the entries of the residual.

It suffices to discuss halting criteria for sparse signals because Lemma~\ref{lem:reduction} shows that the general case can be viewed in terms of sampling a sparse signal.  Let $\vct{x}$ be an $s$-sparse signal, and let $\vct{a}$ be an $s$-sparse approximation.  The residual $\vct{r} = \vct{x} - \vct{a}$.  We write $\vct{v} = \Fee \vct{r} + \vct{e}$ for the induced noisy samples of the residual and $\vct{y} = \Fee^\adj \vct{v}$ for the signal proxy. 

The discussion proceeds in two steps.  First, we argue that an \term{a priori} halting criterion will result in a guarantee about the quality of the final signal approximation.


\begin{thm}[Halting I]
The halting criterion $\enorm{ \vct{v} } \leq \eps$ ensures that
$$
\enorm{ \vct{x} - \vct{a} } \leq 1.06 \cdot ( \eps + \enorm{ \vct{e}}).
$$
The halting criterion $\pnorm{\infty}{ \vct{y} } \leq \eta / \sqrt{2s}$ ensures that
$$
\pnorm{\infty}{ \vct{x} - \vct{a} } \leq 1.12 \eta + 1.17 \enorm{\vct{e}}.
$$
\end{thm}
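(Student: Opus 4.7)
The plan is to work under the reduction to the sparse case that was already noted before the theorem, so $\vct{x}$ is $s$-sparse, the current approximation $\vct{a}$ is $s$-sparse, the residual $\vct{r} = \vct{x} - \vct{a}$ is $2s$-sparse, and $\vct{v} = \Fee\vct{r} + \vct{e}$, $\vct{y} = \Fee^\adj \vct{v}$. Both halves then become short applications of the restricted isometry machinery at level $2s$, exploiting the standing hypothesis $\delta_{2s} \le \delta_{4s} \le 0.1$.

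For the first bound, I would use the lower restricted isometry inequality on the $2s$-sparse vector $\vct{r}$ together with the reverse triangle inequality:
$$
\enorm{\vct{v}}
	\ge \enorm{\Fee\vct{r}} - \enorm{\vct{e}}
	\ge \sqrt{1-\delta_{2s}}\,\enorm{\vct{r}} - \enorm{\vct{e}}.
$$
Rearranging and using $\delta_{2s}\le 0.1$ yields $\enorm{\vct{r}} \le (1-\delta_{2s})^{-1/2}(\enorm{\vct{v}}+\enorm{\vct{e}}) \le 1.06(\eps+\enorm{\vct{e}})$ under the halting rule $\enorm{\vct{v}}\le\eps$, which is the first claim.

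For the $\ell_\infty$ bound, let $R = \supp(\vct{r})$, so $\abs{R}\le 2s$ and $\vct{v} = \Fee_R \vct{r}\restrict{R} + \vct{e}$. Multiplying by $\Fee_R^\adj$ and inverting the (invertible) Gram matrix gives
$$
\vct{r}\restrict{R}
	= (\Fee_R^\adj\Fee_R)^{-1}\bigl(\vct{y}\restrict{R} - \Fee_R^\adj \vct{e}\bigr).
$$
For any index $i\in R$, Cauchy--Schwarz and the symmetry of $(\Fee_R^\adj\Fee_R)^{-1}$ give
$$
\abs{r_i}
	\le \bigl\Vert (\Fee_R^\adj\Fee_R)^{-1}\bigr\Vert
		\bigl(\enorm{\vct{y}\restrict{R}} + \enorm{\Fee_R^\adj \vct{e}}\bigr).
$$
Now I would invoke Proposition~\ref{prop:rip-basic} to control both operators: $\norm{(\Fee_R^\adj\Fee_R)^{-1}}\le (1-\delta_{2s})^{-1}$ and $\enorm{\Fee_R^\adj\vct{e}}\le\sqrt{1+\delta_{2s}}\,\enorm{\vct{e}}$. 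Together with the elementary bound $\enorm{\vct{y}\restrict{R}}\le\sqrt{2s}\,\infnorm{\vct{y}}$ and the halting criterion $\infnorm{\vct{y}}\le\eta/\sqrt{2s}$, this gives
$$
\abs{r_i}
	\le \frac{\sqrt{2s}\,\infnorm{\vct{y}}}{1-\delta_{2s}}
		+ \frac{\sqrt{1+\delta_{2s}}}{1-\delta_{2s}}\enorm{\vct{e}}
	\le \frac{\eta}{0.9} + \frac{\sqrt{1.1}}{0.9}\enorm{\vct{e}},
$$
and the numerical evaluations $1/0.9 < 1.12$ and $\sqrt{1.1}/0.9 < 1.17$ deliver the second claim after taking the maximum over $i\in R$ (indices outside $R$ contribute nothing to $\vct{r}$).

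The arguments are essentially one-liners once the identity for $\vct{r}\restrict{R}$ is written down; the only real care is in packaging the constants so that the published numerical bounds $1.06$, $1.12$, $1.17$ come out cleanly. In particular, the distinction between $1.06$ and $1.17$ in the two ``noise'' coefficients is exactly the factor $\sqrt{1+\delta_{2s}}$ that appears when one bounds $\enorm{\Fee_R^\adj\vct{e}}$ rather than $\enorm{\vct{e}}$ directly; routing the estimate through $\Fee_R^\adj \vct{e}$ is what forces the larger constant in the $\ell_\infty$ statement, and this is the only subtlety I anticipate.
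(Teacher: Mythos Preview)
Your proof is correct and follows essentially the same approach as the paper. The first part is identical; for the second part the paper bounds $\enorm{\vct{y}\restrict{R}}$ from below by $(1-\delta_{2s})\enorm{\vct{r}} - \sqrt{1+\delta_{2s}}\enorm{\vct{e}}$ via the reverse triangle inequality and then uses $\infnorm{\vct{r}}\le\enorm{\vct{r}}$, whereas you invert the Gram matrix explicitly and bound $\abs{r_i}$---but both routes pass through the same inequality $\enorm{\vct{r}}\le (1-\delta_{2s})^{-1}(\enorm{\vct{y}\restrict{R}}+\sqrt{1+\delta_{2s}}\enorm{\vct{e}})$ and yield identical constants.
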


\begin{proof}
Since $\vct{r}$ is $2s$-sparse, Proposition~\ref{prop:rip-basic} ensures that
$$
\sqrt{1 - \delta_{2s}} \enorm{ \vct{r} } - \enorm{ \vct{e} }
	\leq \enorm{ \vct{v} }.
$$
If $\enorm{ \vct{v} } \leq \eps$, it is immediate that
$$
\enorm{ \vct{r} } \leq
\frac{ \eps + \enorm{ \vct{e} } }{\sqrt{1 - \delta_{2s}}}.
$$
The definition $\vct{r} = \vct{x} - \vct{a}$ and the numerical bound $\delta_{2s} \leq \delta_{4s} \leq 0.1$ dispatch the first claim.

Let $R = \supp{ \vct{r} }$, and note that $\abs{R} \leq 2s$.  Proposition~\ref{prop:rip-basic} results in
$$
(1 - \delta_{2s}) \enorm{ \vct{r} } - \sqrt{1 + \delta_{2s}} \enorm{ \vct{e} }
	\leq \enorm{ \vct{y}\restrict{R} }.
$$
Since
$$
\enorm{ \vct{y} \restrict{R} }
	\leq \sqrt{2s} \pnorm{ \infty}{ \vct{y}\restrict{R} }
	\leq \sqrt{2s} \pnorm{ \infty}{ \vct{y} },
$$
we find that the requirement $\pnorm{\infty}{\vct{y}} \leq \eta /\sqrt{2s}$ ensures that
$$
\pnorm{\infty}{ \vct{r} } \leq
	\frac{\eta + \sqrt{1 + \delta_{2s}}\enorm{ \vct{e} }}{1 - \delta_{2s}}.
$$
The numerical bound $\delta_{2s} \leq 0.1$ completes the proof.

\end{proof}

Second, we check that each halting criterion is triggered when the residual has the desired property.

\begin{thm}[Halting II]
The halting criterion $\enorm{ \vct{v} } \leq \eps$ is triggered as soon as
$$
\enorm{ \vct{x} - \vct{a} } \leq 0.95 \cdot ( \eps - \enorm{ \vct{e} } ).
$$
The halting criterion $\pnorm{\infty}{\vct{y}} \leq \eta / \sqrt{2s}$ is triggered as soon as
$$
\pnorm{\infty}{ \vct{x} - \vct{a} } \leq \frac{0.45 \eta}{ s } - \frac{0.68 \enorm{\vct{e}}}{\sqrt{s}}.
$$\end{thm}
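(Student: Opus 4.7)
The plan is to invert the logic of the previous Halting theorem: instead of deducing a bound on the residual from a bound on $\enorm{\vct{v}}$ or $\pnorm{\infty}{\vct{y}}$, I now upper-bound those quantities in terms of the residual and show these upper bounds fall below the thresholds $\eps$ and $\eta/\sqrt{2s}$ whenever the residual is small enough. Throughout, I use that $\vct{r} = \vct{x} - \vct{a}$ is $2s$-sparse (being the difference of two $s$-sparse vectors), that $\vct{v} = \Fee \vct{r} + \vct{e}$, and that $\vct{y} = \Fee^\adj \Fee \vct{r} + \Fee^\adj \vct{e}$.

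For the first claim, the direction is immediate from the upper half of the RIP and the triangle inequality:
$$
\enorm{\vct{v}} \leq \enorm{\Fee\vct{r}} + \enorm{\vct{e}} \leq \sqrt{1+\delta_{2s}}\,\enorm{\vct{r}} + \enorm{\vct{e}}.
$$
Using $\delta_{2s} \leq 0.1$, the coefficient $\sqrt{1+\delta_{2s}} \leq \sqrt{1.1} < 1/0.95$, so the hypothesis $\enorm{\vct{r}} \leq 0.95(\eps - \enorm{\vct{e}})$ yields $\enorm{\vct{v}} \leq (\eps - \enorm{\vct{e}}) + \enorm{\vct{e}} = \eps$, which triggers the stopping rule.

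For the second claim, I will estimate $\pnorm{\infty}{\vct{y}}$ coordinatewise. Split $\vct{y} = \Fee^\adj\Fee\vct{r} + \Fee^\adj\vct{e}$. For the noise term, each entry $(\Fee^\adj\vct{e})_i = \smip{\atom_i}{\vct{e}}$ is bounded by $\sqrt{1+\delta_1}\enorm{\vct{e}} \leq \sqrt{1.1}\enorm{\vct{e}}$ since the columns of $\Fee$ have norm at most $\sqrt{1+\delta_1}$. For the signal term, let $R = \supp{\vct{r}}$ and fix a coordinate $i$. If $i \in R$, set $T = R$ and write $\Fee_T^\adj\Fee_T = \Id + \mtx{M}$ with $\norm{\mtx{M}} \leq \delta_{2s}$; this gives $|(\Fee^\adj\Fee\vct{r})_i| \leq |r_i| + \delta_{2s}\enorm{\vct{r}}$. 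If $i \notin R$, then Proposition~\ref{prop:approx-orth} applied to $\{i\}$ and $R$ gives $|(\Fee^\adj\Fee\vct{r})_i| \leq \delta_{2s+1}\enorm{\vct{r}}$. Combining both cases and using $\delta_{2s+1} \leq \delta_{3s} \leq 0.1$ together with $\enorm{\vct{r}} \leq \sqrt{2s}\,\pnorm{\infty}{\vct{r}}$,
$$
\pnorm{\infty}{\Fee^\adj\Fee\vct{r}} \leq \pnorm{\infty}{\vct{r}} + 0.1\sqrt{2s}\,\pnorm{\infty}{\vct{r}} = (1+0.1\sqrt{2s})\,\pnorm{\infty}{\vct{r}}.
$$

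To close the argument, I plug the hypothesis $\pnorm{\infty}{\vct{r}} \leq 0.45\eta/s - 0.68\enorm{\vct{e}}/\sqrt{s}$ into the composite bound
$$
\pnorm{\infty}{\vct{y}} \leq (1+0.1\sqrt{2s})\,\pnorm{\infty}{\vct{r}} + \sqrt{1.1}\,\enorm{\vct{e}},
$$
and verify that the resulting affine function of $\enorm{\vct{e}}$ is bounded above by $\eta/\sqrt{2s}$ over its feasible range (i.e., for $\enorm{\vct{e}}$ between $0$ and the value $0.45\eta/(0.68\sqrt{s})$ that makes the RHS of the hypothesis vanish). The main obstacle is the numerical bookkeeping in this last step: the constants $0.45$, $0.68$, and the coefficient $1/\sqrt{2s}$ are chosen so that the bound just barely holds, and one must check both that the coefficient of $\eta$ satisfies $0.45(1+0.1\sqrt{2s})/s \leq 1/\sqrt{2s}$ and that the combined coefficient of $\enorm{\vct{e}}$ does not overshoot when $\enorm{\vct{e}}$ saturates the constraint. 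Both reduce to elementary inequalities in $s$ that can be verified directly (with the tightest case occurring at $s=1$, where the combined upper bound is approximately $0.694\eta$ versus the target $\eta/\sqrt{2} \approx 0.707\eta$).
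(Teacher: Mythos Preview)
Your proof of the first claim is identical to the paper's.

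For the second claim your route is correct but differs from the paper's. The paper avoids the coordinatewise splitting and the affine-in-$\enorm{\vct{e}}$ endpoint check entirely: it observes that if $R$ is the singleton index where $\vct{y}$ is largest, then $\pnorm{\infty}{\vct{y}} = \enorm{\vct{y}\restrict{R}} = \smnorm{2}{\Fee_R^\adj \vct{v}} \leq \sqrt{1+\delta_1}\,\enorm{\vct{v}}$, and then simply invokes the \emph{first} claim with $\eps = \eta/(\sqrt{2s}\sqrt{1+\delta_1})$. Converting the resulting $\ell_2$ condition on $\vct{r}$ to an $\ell_\infty$ condition via $\enorm{\vct{r}} \leq \sqrt{2s}\,\pnorm{\infty}{\vct{r}}$ immediately yields the constants $0.45$ and $0.68$ (from $0.95/(2\sqrt{1.1})$ and $0.95/\sqrt{2}$), with no case analysis. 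Your direct $\ell_\infty \to \ell_\infty$ estimate $\pnorm{\infty}{\Fee^\adj\Fee\vct{r}} \leq (1+0.1\sqrt{2s})\pnorm{\infty}{\vct{r}}$ is a nice standalone bound, but the price is the two-endpoint numerical verification, which only \emph{just} closes (your own $0.694$ versus $0.707$). The paper's reduction to the first part is shorter and explains the provenance of the constants.
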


\begin{proof}
Proposition \ref{prop:rip-basic} shows that
$$
\enorm{ \vct{v} } \leq \sqrt{1 + \delta_{2s}} \enorm{ \vct{r}} + \enorm{ \vct{e} }.
$$
Therefore, the condition
$$
\enorm{\vct{r}} \leq
	\frac{\eps - \enorm{\vct{e}}}{\sqrt{1 + \delta_{2s}}}
$$
ensures that $\enorm{ \vct{v} } \leq \eps$.  Note that $\delta_{2s} \leq 0.1$ to complete the first part of the argument.

Now let $R$ be the singleton containing the index of a largest-magnitude coefficient of $\vct{y}$.
Proposition~\ref{prop:rip-basic} implies that
$$
\pnorm{ \infty}{ \vct{y} } = \enorm{ \vct{y}\restrict{R} } \leq \sqrt{1 + \delta_{1}}\enorm{ \vct{v} }.
$$
By the first part of this theorem, the halting criterion $\pnorm{\infty}{\vct{y}} \leq \eta / \sqrt{2s}$ is triggered as soon as
$$
\enorm{ \vct{x} - \vct{a} } \leq 0.95 \cdot \left( \frac{\eta}{\sqrt{2s}\sqrt{1 + \delta_{1}}} - \enorm{ \vct{e} } \right).
$$
Since $\vct{x} - \vct{a}$ is $2s$-sparse, we have the bound $\enorm{ \vct{x} - \vct{a} } \leq \sqrt{2s}\pnorm{\infty}{ \vct{x-a} }$.  To wrap up, recall that $\delta_{1} \leq \delta_{2s} \leq 0.1$.
\end{proof}

\subsection{Other Variations}

This section briefly describes several natural variations on \cosamp\ that may improve its performance.

\begin{enumerate} \setlength{\itemsep}{0.5pc}
\item	Here is a version of the algorithm that is, perhaps, simpler than Algorithm~\ref{alg:cosamp}.  At each iteration, we approximate the current residual rather than the entire signal.  This approach is similar to HHS Pursuit \cite{GSTV07:HHS}.  The inner loop changes in the following manner.
\vspace{1pc}
\begin{description} \setlength{\itemsep}{0.5pc}
\item	[Identification]
	As before, select $\Omega = \supp{\vct{y}_{2s}}$.

\item	[Estimation]
	Solve a least-squares problem with the \emph{current samples} instead of the original samples to obtain an approximation of the \emph{residual signal}.  Formally, $\vct{b} = \Fee_\Omega^\psinv \vct{v}$.  In this case, one initializes the iterative least-squares algorithm with the zero vector to take advantage of the fact that the residual is becoming small.

\item	[Approximation Merger]
	Add this approximation of the residual to the previous approximation of the signal to obtain a new approximation of the signal: $\vct{c} = \vct{a}^{k-1} + \vct{b}$.

\item	[Pruning]
	Construct the $s$-sparse signal approximation: $\vct{a}^k = \vct{c}_s$.

\item	[Sample Update]
	Update the samples as before: $\vct{v} = \vct{u} - \Fee \vct{a}^k$.
\end{description}
\vspace{1pc}
We can show that this algorithm satisfies a result similar to Theorem~\ref{thm:cosamp-invar} by adapting the current argument.  We were unable to verify an analog of Theorem~\ref{thm:cosamp-count}.  Nevertheless, we believe that this version of the algorithm is also promising.

\item	After the inner loop of the algorithm is complete, we can solve another least-squares problem in an effort to improve the final result.  If $\vct{a}$ is the approximation at the end of the loop, we set $T = \supp{ \vct{a} }$.  Then solve $\vct{b} = \Fee_T^\psinv \vct{u}$ and output the $s$-sparse signal approximation $\vct{b}$.  Note that the output approximation is not guaranteed to be better than $\vct{a}$ because of the noise vector $\vct{e}$, but it should never be much worse.


\item	Another variation is to prune the merged support $T$ down to $s$ entries \emph{before} solving the least-squares problem.  One may use the values of the proxy $\vct{y}$ as surrogates for the unknown values of the new approximation on the set $\Omega$.  Since the least-squares problem is solved at the end of the iteration, the columns of $\Fee$ that are used in the least-squares approximation are orthogonal to the current samples $\vct{v}$.  As a result, the identification step always selects new components in each iteration.  We have not attempted an analysis of this algorithm.

\end{enumerate}

\section{Iteration Count}\label{app:count-sparse}

In this appendix, we obtain an estimate on the number of iterations of the \cosamp\ algorithm necessary to identify the recoverable energy in a sparse signal, assuming exact arithmetic.  Except where stated explicitly, we assume that $\vct{x}$ is $s$-sparse.  It turns out that the number of iterations depends heavily on the signal structure.  Let us explain the intuition behind this fact.  

When the entries in the signal decay rapidly, the algorithm must identify and remove the largest remaining entry from the residual before it can make further progress on the smaller entries.  Indeed, the large component in the residual contaminates each component of the signal proxy.  In this case, the algorithm may require an iteration or more to find each component in the signal.

On the other hand, when the $s$ entries of the signal are comparable, the algorithm can simultaneously locate many entries just by reducing the norm of the residual below the magnitude of the smallest entry.  Since the largest entry of the signal has magnitude at least $s^{-1/2}$ times the $\ell_2$ norm of the signal, the algorithm can find all $s$ components of the signal after about $\log s$ iterations. 

To quantify these intuitions, we want to collect the components of the signal into groups that are comparable with each other.  To that end, define the \term{component bands} of a signal $\vct{x}$ by the formulae
\begin{equation}\label{eq:bjs}
B_j \defby \left\{ i : 2^{-(j+1)} \enormsq{\vct{x}}
	< \abssq{ x_i } \leq 2^{-j} \enormsq{\vct{x}} \right\}
\qquad\text{for $j = 0, 1, 2, \dots$.}
\end{equation}
The \term{profile} of the signal is the number of bands that are nonempty:
$$
{\rm profile}(\vct{x}) \defby \# \{ j : B_j \neq \emptyset \}.
$$
In words, the profile counts how many orders of magnitude at which the signal has coefficients.  It is clear that the profile of an $s$-sparse signal is at most $s$.  See Figure~\ref{fig:profile} for images of stylized signals with different profiles.

\begin{figure}
\centering
\subfigure[Low profile]{
	\includegraphics[width=0.45\textwidth, height=2.5in]{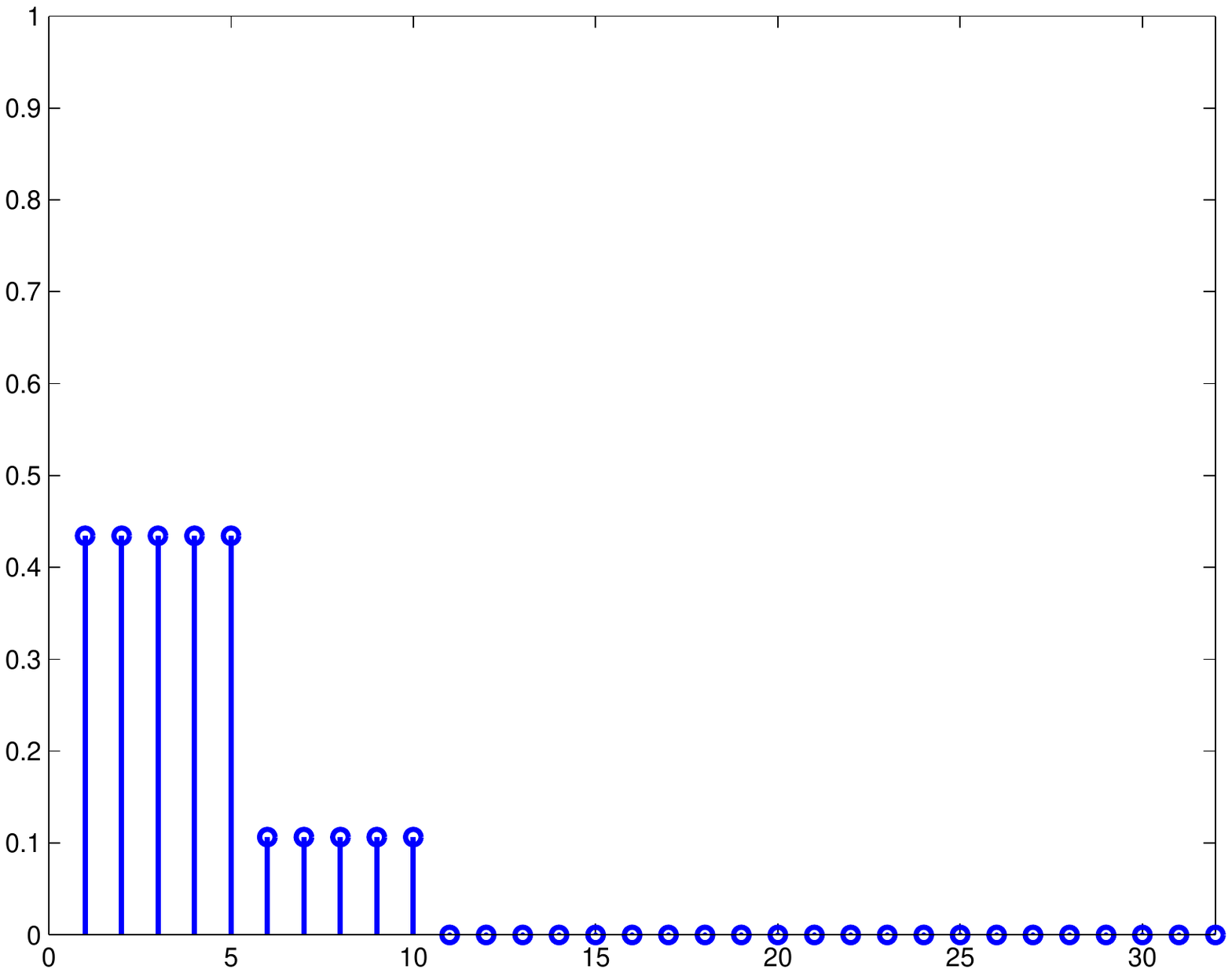}}
\hspace{1pc}
\subfigure[High profile]{
	\includegraphics[width=0.45\textwidth, height=2.5in]{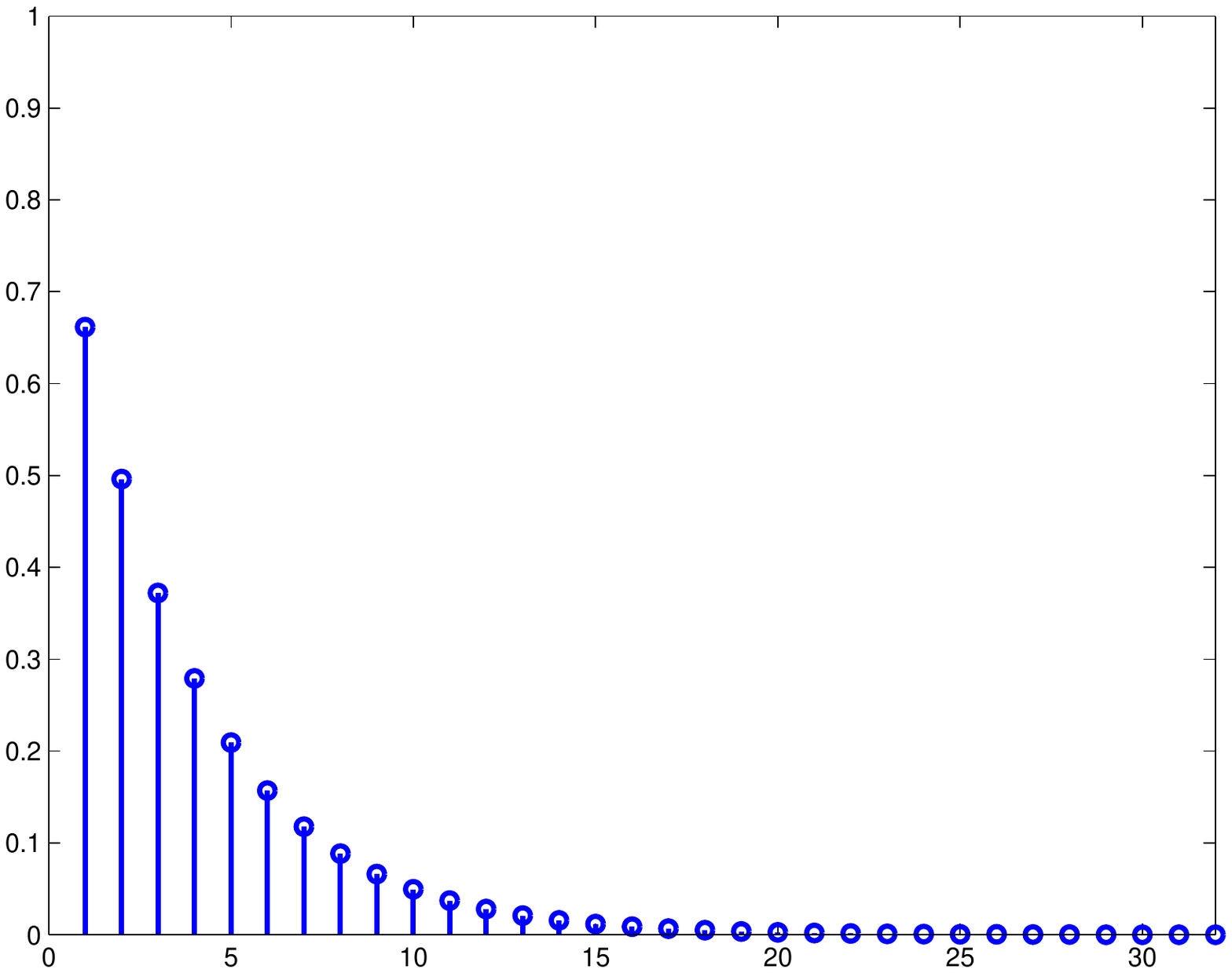}}
\caption{Illustration of two unit-norm signals with sharply different profiles.}  
	\label{fig:profile}
\end{figure}

First, we prove a result on the number of iterations needed to acquire an $s$-sparse signal.  At the end of the section, we extend this result to general signals, which yields Theorem~\ref{thm:cosamp-count}.


\begin{thm}[Iteration Count: Sparse Case] \label{thm:count-sparse}
Let $\vct{x}$ be an $s$-sparse signal, and define $p = {\rm profile}(\vct{x})$.  After at most
$$
p \log_{4/3}(1 + 4.6 \sqrt{s/p}) + 6
$$
iterations, \cosamp\ produces an approximation $\vct{a}$ that satisfies
$$
\enorm{ \vct{x} - \vct{a} } \leq 17 \enorm{ \vct{e} }.
$$
\end{thm}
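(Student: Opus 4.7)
The plan is to refine the iteration invariant of Theorem~\ref{thm:invar-sparse} by tracking how the algorithm captures the nonempty component bands $B_{j_1}, \ldots, B_{j_p}$, enumerated so that $j_1 < \cdots < j_p$, into the support of the current approximation. The key ingredient is a \emph{capture criterion}: if $\smnorm{2}{\vct{x} - \vct{a}^k} < 2^{-(j+1)/2}\enorm{\vct{x}}$, then every entry of every band $B_0, \ldots, B_j$ must lie in $\supp(\vct{a}^k)$, since any unsupported index $i \in B_\ell$ with $\ell \leq j$ would force $|x_i - a^k_i| = |x_i| > 2^{-(j+1)/2}\enorm{\vct{x}}$, contradicting the residual norm bound. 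Setting $b_i = |B_{j_i}|$, I partition the iterations into $p$ phases, where phase $i$ ends the first time $B_{j_i}$ is fully captured.

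Within phase $i$, once $B_{j_1}, \ldots, B_{j_{i-1}}$ are captured, the residual decomposes into (i) a least-squares error on the captured indices, bounded by a constant multiple of $\enorm{\vct{e}}$, plus (ii) the uncaptured entries in $B_{j_i}, \ldots, B_{j_p}$. The crucial point is that the band definition~\eqref{eq:bjs} forces the per-entry energy in each later band $B_{j_k}$ ($k > i$) to be at most $2^{-(j_k - j_i)}$ times the peak energy in $B_{j_i}$, so later bands contribute to the residual in a geometrically decaying way. Consequently, the starting residual of phase $i$ is bounded, up to noise, by a constant multiple of $\sqrt{b_i}\cdot 2^{-j_i/2}\enorm{\vct{x}}$. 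The phase ends once this quantity drops below the capture threshold $2^{-(j_i+1)/2}\enorm{\vct{x}}$, a reduction of roughly $\sqrt{b_i}$. Iterating Theorem~\ref{thm:invar-sparse} and observing that the additive $7.5\enorm{\vct{e}}$ term slows the effective contraction to a factor of $3/4$ per iteration once the residual nears the noise floor (where $7.5\enorm{\vct{e}} \approx \tfrac{1}{4}\rho$), phase $i$ consumes at most $\log_{4/3}(1 + 4.6\sqrt{b_i})$ iterations.

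Summing over the $p$ phases and applying Jensen's inequality to the concave function $t \mapsto \log_{4/3}(1 + 4.6\sqrt{t})$, with $\sum_i b_i \leq s$, yields
$$
\sum_{i=1}^p \log_{4/3}(1 + 4.6\sqrt{b_i}) \leq p \log_{4/3}\!\left(1 + 4.6\sqrt{s/p}\right).
$$
The additive $+6$ absorbs the bounded number of initial iterations required to drive the residual from $\enorm{\vct{x}}$ below the capture threshold of $B_{j_1}$, together with the slack between the fully-captured state (where the residual is of order $\enorm{\vct{e}}$ by Lemma~\ref{lem:estimation}) and the final bound $17\enorm{\vct{e}}$.

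The technical heart of the argument will be the per-phase bound: pinning down the starting residual of phase $i$ to scale as $\sqrt{b_i}\cdot 2^{-j_i/2}\enorm{\vct{x}}$ rather than $\sqrt{\sigma_i}\cdot 2^{-j_i/2}\enorm{\vct{x}}$, where $\sigma_i = b_i + b_{i+1} + \cdots$ counts all remaining uncaptured entries. The weaker bound in terms of $\sigma_i$ would give only $p \log_{4/3}(1 + c\sqrt{s})$, losing the critical $1/p$ factor inside the logarithm that distinguishes low-profile from high-profile signals. Closing this gap requires a delicate argument that the exponentially smaller magnitudes of later bands more than offset their possibly large cardinality---plausibly through an amortized analysis or a carefully chosen potential function that telescopes across phases---and a bookkeeping that correctly produces the base $4/3$ from balancing the iteration invariant's contraction against its noise term in the relevant regime.
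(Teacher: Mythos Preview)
Your architecture matches the paper closely: the capture criterion, the phase decomposition indexed by nonempty bands, the dichotomy that either the residual is already $O(\enorm{\vct{e}})$ or else the contraction factor is $3/4$ (from $0.5 + 7.5/30$), and a Jensen step at the end. The gap you flag is real, and the resolution is different from what you conjecture.

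Your per-phase claim that the starting residual scales like $\sqrt{b_i}\cdot 2^{-j_i/2}\enorm{\vct{x}}$ is false as stated: if $b_i = 1$ and $b_{i+1}$ is large with $j_{i+1} = j_i + 1$, then $\enormsq{\vct{y}^{j_i}} \gtrsim b_{i+1} 2^{-j_i-1}\enormsq{\vct{x}}$, which can exceed $b_i 2^{-j_i}\enormsq{\vct{x}}$ by an arbitrary factor. The paper does \emph{not} attempt to sharpen the per-phase bound to $\sqrt{b_i}$. It keeps the honest estimate $\smnorm{2}{\vct{r}^k} \leq 2.3\,\enorm{\vct{y}^{j_i}}$ and records that phase $i$ costs at most $\log_{4/3}\bigl\lceil 2.3\cdot 2^{(j_i+1)/2}\enorm{\vct{y}^{j_i}}/\enorm{\vct{x}}\bigr\rceil$ iterations. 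The amortization happens only after summing these: apply AM--GM to turn $\frac{1}{p}\sum_{j\in J}\log(\cdot)$ into $\log$ of an average, then bound
\[
\frac{1}{p}\sum_{j\in J}\Bigl(2^{j+1}\enormsq{\vct{y}^{j}}/\enormsq{\vct{x}}\Bigr)^{1/2}
\ \leq\ \Bigl(\tfrac{1}{p}\sum_{j\in J} 2^{j+1}\sum_{i\geq j}2^{-i}|B_i|\Bigr)^{1/2}
\]
via Jensen on $\sqrt{\cdot}$, and finally \emph{swap the order of summation}: $\sum_{j\in J}\sum_{i\geq j}2^{j-i+1}|B_i| = \sum_i |B_i|\sum_{j\leq i}2^{j-i+1} \leq 4\sum_i|B_i| = 4s$. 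This interchange is the missing idea; it is what collapses the contribution of all later bands into a geometric series per band, yielding $2\sqrt{s/p}$ without ever needing a per-phase bound in $b_i$ alone. So your Jensen step should act on the quantities $2^{(j+1)/2}\enorm{\vct{y}^j}/\enorm{\vct{x}}$, not on $\sqrt{b_i}$.
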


For a fixed $s$, the bound on the number of iterations achieves its maximum value at $p = s$.  Since $\log_{4/3} 5.6 < 6$, the number of iterations never exceeds $6(s + 1)$.



\subsection{Additional Notation}

Let us instate some notation that will be valuable in the proof of the theorem.  We write $p = {\rm profile}(\vct{x})$.  For each $k = 0, 1, 2, \dots$, the signal $\vct{a}^k$ is the approximation after the $k$th iteration.  We abbreviate $S_k = \supp{ \vct{a}^k }$, and we define the residual signal $\vct{r}^k = \vct{x} - \vct{a}^k$.  The norm of the residual can be viewed as the approximation error.

For a nonnegative integer $j$, we may define an auxiliary signal
$$
\vct{y}^j \defby \vct{x}\restrict{\bigcup_{i \geq j} B_i}
$$
In other words, $\vct{y}^j$ is the part of $\vct{x}$ contained in the bands $B_j$, $B_{j+1}$, $B_{j+2}$, \dots.  For each $j \in J$, we have the estimate
\begin{equation} \label{eqn:signal-tail-bd}
\enormsq{ \vct{y}^j } \leq \sum\nolimits_{i \geq j} 2^{-i} \enormsq{\vct{x}} \cdot \abs{B_i}
\end{equation}
by definition of the bands.  These auxiliary signals play a key role in the analysis.

\subsection{Proof of Theorem~\ref{thm:count-sparse}}

The proof of the theorem involves a sequence of lemmas.  The first object is to establish an alternative that holds in each iteration.  One possibility is that the approximation error is small, which means that the algorithm is effectively finished.  Otherwise, the approximation error is dominated by the energy in the unidentified part of the signal, and the subsequent approximation error is a constant factor smaller.

\begin{lemma} \label{lem:iter-alter}
For each iteration $k = 0, 1, 2, \dots$, at least one of the following alternatives holds.  Either
\begin{equation} \label{eqn:resid-err-bd}
\smnorm{2}{ \vct{r}^k }
	\leq 70 \enorm{ \vct{e} }
\end{equation}
or else
\begin{align}
\smnorm{2}{ \vct{r}^k }
	&\leq 2.3 \smnorm{2}{ \vct{x}\restrict{S_k^c} }
	\qquad\text{and}
	 \label{eqn:missing-link} \\
\smnorm{2}{ \vct{r}^{k+1} }
	&\leq 0.75 \smnorm{2}{ \vct{r}^{k} }
	\label{eqn:error-reduct}.
\end{align}
\end{lemma}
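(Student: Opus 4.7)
The plan is to establish an a posteriori bound relating the residual $\vct{r}^k = \vct{x}-\vct{a}^k$ to the unidentified part $\vct{x}\restrict{S_k^c}$ of the signal, and then to trichotomize on the size of $\|\vct{r}^k\|_2$ relative to $\|\vct{e}\|_2$.

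The first step is to prove, for every $k \geq 1$, the inequality
$$
\smnorm{2}{\vct{r}^k} \leq 2.224\,\smnorm{2}{\vct{x}\restrict{S_k^c}} + 2.12\,\enorm{\vct{e}}.
$$
Let $T$ be the merged support used in iteration $k$, so that $\vct{b}\restrict{T} = \Fee_T^\psinv \vct{u}$ and $\vct{a}^k = \vct{b}_s$. Combining the Pruning Lemma (Lemma~\ref{lem:pruning}) with the Estimation Lemma (Lemma~\ref{lem:estimation}) yields $\|\vct{r}^k\|_2 \leq 2\|\vct{x}-\vct{b}\|_2 \leq 2.224\,\|\vct{x}\restrict{T^c}\|_2 + 2.12\,\|\vct{e}\|_2$. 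Because $S_k = \supp{\vct{a}^k} \subset T$, the containment $T^c \subset S_k^c$ holds and so $\|\vct{x}\restrict{T^c}\|_2 \leq \|\vct{x}\restrict{S_k^c}\|_2$. For $k=0$ the inequality is trivial since $S_0 = \emptyset$ and $\vct{r}^0 = \vct{x} = \vct{x}\restrict{S_0^c}$.

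Next, I would argue by cases on whether $\|\vct{r}^k\|_2 \leq 70\,\|\vct{e}\|_2$. In the affirmative case, \eqref{eqn:resid-err-bd} is immediate. In the contrary case $\|\vct{r}^k\|_2 > 70\,\|\vct{e}\|_2$, substitute $\|\vct{e}\|_2 < \|\vct{r}^k\|_2/70$ into the Step 1 bound to obtain
$$
\smnorm{2}{\vct{r}^k}\,(1 - 2.12/70) \leq 2.224\,\smnorm{2}{\vct{x}\restrict{S_k^c}},
$$
and since $2.224/(1-2.12/70) < 2.3$ this gives \eqref{eqn:missing-link}. To derive \eqref{eqn:error-reduct} in this same case, invoke the sparse iteration invariant, Theorem~\ref{thm:invar-sparse}, which yields
$$
\smnorm{2}{\vct{r}^{k+1}} \leq 0.5\,\smnorm{2}{\vct{r}^k} + 7.5\,\enorm{\vct{e}} \leq (0.5 + 7.5/70)\,\smnorm{2}{\vct{r}^k} < 0.75\,\smnorm{2}{\vct{r}^k}.
$$

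Once one has the Step 1 bound, the rest is routine arithmetic tuning of constants. The main conceptual obstacle is recognizing that the natural quantity controlling the residual is $\|\vct{x}\restrict{S_k^c}\|_2$ rather than $\|\vct{x}\restrict{T^c}\|_2$: it is the \emph{pruned} support $S_k$, not the merged set $T$, that appears in the subsequent counting argument via the band decomposition \eqref{eq:bjs}, since the algorithm's progress is measured by how much of the signal's support has been permanently captured by $\vct{a}^k$. The threshold $70$ is simply chosen large enough that $2.12/70$ is negligible relative to the $2.224 \to 2.3$ slack and that $7.5/70$ is small enough to keep the $0.5$ contraction factor from the iteration invariant below $0.75$.
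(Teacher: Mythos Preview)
Your proof is correct and follows essentially the same approach as the paper: both establish the bound $\smnorm{2}{\vct{r}^k} \leq 2.224\,\smnorm{2}{\vct{x}\restrict{S_k^c}} + 2.12\,\enorm{\vct{e}}$ via the Estimation and Pruning lemmas together with the containment $S_k \subset T$, and then combine this with the sparse iteration invariant. The only cosmetic difference is that the paper splits cases on whether $\smnorm{2}{\vct{x}\restrict{S_k^c}} \gtrless 30\,\enorm{\vct{e}}$ while you split directly on $\smnorm{2}{\vct{r}^k} \gtrless 70\,\enorm{\vct{e}}$; your choice is arguably cleaner since it matches the statement of the lemma.
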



\begin{proof}
Define $T_k$ as the merged support that occurs during iteration $k$.
The pruning step ensures that the support $S_{k}$ of the approximation at the end of the iteration is a subset of the merged support, so
$$
\smnorm{2}{ \vct{x}\restrict{T_{k}^c} } \leq
	\smnorm{2}{ \vct{x}\restrict{S_k^c} }
\qquad\text{for $k = 1, 2, 3, \dots$}.	
$$
At the end of the $k$th iteration, the pruned vector $\vct{b}_s$ becomes the next approximation $\vct{a}^{k}$, so the estimation and pruning results, Lemmas~\ref{lem:estimation} and~\ref{lem:pruning}, together imply that
\begin{align} 
\smnorm{2}{ \vct{r}^{k} }
	&\leq 2 \cdot ( 1.112 \smnorm{2}{ \vct{x}\restrict{T_{k}^c} }
		+ 1.06 \enorm{\vct{e}} ) \notag \\
	&\leq 2.224 \smnorm{2}{ \vct{x}\restrict{S_{k}^c} }
		+ 2.12 \enorm{\vct{e}} \label{eqn:est-bd}
\qquad\text{for $k = 1, 2, 3, \dots$}.
\end{align}
Note that the same relation holds trivially for iteration $k = 0$ because $\vct{r}^0 = \vct{x}$ and $S_0 = \emptyset$.

Suppose that there is an iteration $k \geq 0$ where
\begin{equation*} 
\smnorm{2}{ \vct{x}\restrict{S_{k}^c} } < 30 \enorm{\vct{e}}.
\end{equation*}
We can introduce this bound directly into the inequality \eqref{eqn:est-bd} to obtain the first conclusion \eqref{eqn:resid-err-bd}.

Suppose on the contrary that in iteration $k$ we have
\begin{equation*} 
\smnorm{2}{ \vct{x}\restrict{S_{k}^c} } \geq 30 \enorm{\vct{e}}.
\end{equation*}
Introducing this relation into the inequality \eqref{eqn:est-bd} leads quickly to the conclusion \eqref{eqn:missing-link}.  We also have the chain of relations
$$
\smnorm{2}{ \vct{r}^k }
	\geq \smnorm{2}{ \vct{r}^k\restrict{S_k^c} }
	= \smnorm{2}{ (\vct{x} - \vct{a}^k)\restrict{S_k^c} }
	= \enorm{ \vct{x}\restrict{S_k^c} }
	\geq 30 \enorm{ \vct{e}}.
$$
Therefore, the sparse iteration invariant, Theorem~\ref{thm:invar-sparse} ensures that \eqref{eqn:error-reduct} holds.
\end{proof}

The next lemma contains the critical part of the argument.  Under the second alternative in the previous lemma, we show that the algorithm completely identifies the support of the signal, and we bound the number of iterations required to do so.

\begin{lemma} \label{lem:iter-count}
Fix $K = \lfloor p \log_{4/3} (1 + 4.6 \sqrt{s/p}) \rfloor$.  Assume that \eqref{eqn:missing-link} and \eqref{eqn:error-reduct} are in force for each iteration $k = 0, 1, 2, \dots, K$.  Then $\supp{ \vct{a}^{K} } = \supp{\vct{x}}$.
\end{lemma}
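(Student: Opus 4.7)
The plan is to proceed inductively across the component bands in increasing order of band index. Enumerate the nonempty bands as $B_{j_1}, B_{j_2}, \dots, B_{j_p}$ with $j_1 < j_2 < \cdots < j_p$, and let $k_q$ denote the first iteration at which $B_{j_1} \cup \cdots \cup B_{j_q} \subseteq S_{k_q}$ (with $k_0 = 0$). The goal is to bound $k_p \leq K$. The engine driving the argument is a simple observation: since $\vct{r}^k\restrict{S_k^c} = \vct{x}\restrict{S_k^c}$, any index $i \in \supp(\vct{x})\setminus S_k$ satisfies $|x_i| \leq \smnorm{2}{\vct{r}^k}$; contrapositively, $\smnorm{2}{\vct{r}^k} \leq 2^{-(j_q+1)/2}\enorm{\vct{x}}$ forces $B_{j_q} \subseteq S_k$ (and likewise for every lower band). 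So the task reduces to counting how fast the residual norm can be driven past each such threshold.

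Next I would control the drop across one band. At step $k_{q-1}$, by definition $\supp(\vct{x}\restrict{S_{k_{q-1}}^c}) \subseteq \bigcup_{i \geq q} B_{j_i}$, so $\smnorm{2}{\vct{x}\restrict{S_{k_{q-1}}^c}} \leq \enorm{\vct{y}^{j_q}}$. Assumption \eqref{eqn:missing-link} then yields $\smnorm{2}{\vct{r}^{k_{q-1}}} \leq 2.3\,\enorm{\vct{y}^{j_q}}$, and iterating \eqref{eqn:error-reduct} gives $\smnorm{2}{\vct{r}^{k_{q-1}+\ell}} \leq 0.75^\ell \cdot 2.3\,\enorm{\vct{y}^{j_q}}$. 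Combining with the threshold condition above, band $B_{j_q}$ is fully captured once
\[
0.75^{\ell_q}\cdot 2.3\,\enorm{\vct{y}^{j_q}} \leq 2^{-(j_q+1)/2}\enorm{\vct{x}},
\]
so $\ell_q := k_q - k_{q-1}$ is controlled by $\log_{4/3}\bigl(C \cdot 2^{(j_q+1)/2}\enorm{\vct{y}^{j_q}}/\enorm{\vct{x}}\bigr)$ for an absolute constant $C$. Applying the tail bound \eqref{eqn:signal-tail-bd} and the exponential spacing of bands, $2^{j_q}\enormsq{\vct{y}^{j_q}}/\enormsq{\vct{x}} \leq \sum_{i \geq q} 2^{-(j_i - j_q)} |B_{j_i}|$, which is bounded by a geometrically weighted sum dominated by a multiple of the number $n_q := |B_{j_q}|$ plus a rapidly decaying tail. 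Thus I would arrive at a per-band estimate of the form $\ell_q \leq \lceil \log_{4/3}(1 + C' \sqrt{n_q}) \rceil$ for some explicit $C'$.

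The final step aggregates these per-band bounds. Since $\sum_{q=1}^p n_q \leq s$, Cauchy--Schwarz gives $\sum_q \sqrt{n_q} \leq \sqrt{p\,s}$, and the concavity of $\log_{4/3}(1+C'\sqrt{\,\cdot\,})$ combined with Jensen's inequality yields
\[
k_p \leq \sum_{q=1}^{p} \log_{4/3}(1 + C'\sqrt{n_q}) \leq p \log_{4/3}\!\left(1 + C'\sqrt{s/p}\right),
\]
producing the claimed $K$ once the constant $C'$ is tracked carefully and rounding losses are absorbed into the $+6$ of Theorem~\ref{thm:count-sparse}.

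The main obstacle will be the per-band estimate: a crude bound $\enormsq{\vct{y}^{j_q}} \leq 2^{-j_q} s \enormsq{\vct{x}}$ would only deliver $\ell_q \lesssim \log_{4/3}(\sqrt{s})$, which after summation gives the much weaker $p \log_{4/3}\sqrt{s}$. Recovering the sharp factor $\sqrt{s/p}$ requires genuinely exploiting the geometric decay in the band definition \eqref{eq:bjs} so that $\enormsq{\vct{y}^{j_q}}$ is essentially governed by $|B_{j_q}|$ alone, with later bands contributing only a bounded geometric tail. This local-versus-global bookkeeping is the delicate technical point on which the theorem hinges.
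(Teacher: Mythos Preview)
Your setup and threshold argument are correct and match the paper's. The gap is the per-band estimate $\ell_q \leq \lceil \log_{4/3}(1 + C'\sqrt{n_q}) \rceil$. The quantity
\[
2^{j_q}\enormsq{\vct{y}^{j_q}}/\enormsq{\vct{x}} \;\leq\; \sum_{i \geq q} 2^{-(j_i - j_q)}\,|B_{j_i}|
\]
is \emph{not} controlled by $n_q = |B_{j_q}|$ plus a bounded tail: the weights $2^{-(j_i - j_q)}$ decay, but the sizes $|B_{j_i}|$ can grow to compensate. For a concrete obstruction, take two adjacent nonempty bands with $|B_{j_1}| = 1$ and $|B_{j_2}| = s-1$; then the sum above for $q=1$ is of order $s$, so $\ell_1 \approx \log_{4/3}\sqrt{s}$, not $\log_{4/3}(1+C')$. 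Your claim that ``$\enormsq{\vct{y}^{j_q}}$ is essentially governed by $|B_{j_q}|$ alone'' is therefore false, and the subsequent Jensen step on the $n_q$'s is applied to a bound that does not exist.

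The paper's proof avoids this by never isolating a per-band bound in terms of $n_q$. It keeps the quantities $a_q = 2.3 \cdot 2^{(j_q+1)/2}\enorm{\vct{y}^{j_q}}/\enorm{\vct{x}}$ intact, applies concavity of $\log$ (arithmetic--geometric mean) to get $\sum_q \log_{4/3} \lceil a_q \rceil \leq p\,\log_{4/3}\!\bigl(1 + \tfrac{1}{p}\sum_q a_q\bigr)$, then Jensen for the square root, and only \emph{then} uses the tail bound together with an interchange of summation order:
\[
\sum_{j \in J} \sum_{i \geq j} 2^{j-i}\,|B_i| \;=\; \sum_{i} |B_i| \sum_{j \leq i} 2^{j-i} \;\leq\; 2 \sum_i |B_i| \;=\; 2s.
\]
The interchange is precisely what absorbs the cross-band interaction that wrecks your per-band estimate; it cannot be replicated by bounding each band in isolation. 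If you drop the intermediate reduction to $n_q$ and apply Jensen directly to $\sum_q \log_{4/3}(1 + a_q)$, you recover the paper's argument.
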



\begin{proof}
First, we check that, once the norm of the residual is smaller than each element of a band, the components in that band persist in the support of each subsequent approximation.  Define $J$ to be the set of nonempty bands, and fix a band $j \in J$.  Suppose that, for some iteration $k$, the norm of the residual satisfies
\begin{equation} \label{eqn:resid-band-j}
\smnorm{2}{ \vct{r}^k } \leq 2^{-(j+1)/2} \enorm{ \vct{x} }.
\end{equation}
Then it must be the case that
$B_j \subset \supp{ \vct{a}^k }$.
If not, then some component $i \in B_j$ appears in the residual: $r^k_i = x_i$.  This supposition implies that
$$
\smnorm{2}{ \vct{r}^k } \geq \abs{x_i} > 2^{-(j+1)/2} \enorm{\vct{x}},
$$
an evident contradiction.  Since \eqref{eqn:error-reduct} guarantees that the norm of the residual declines in each iteration, \eqref{eqn:resid-band-j} ensures that the support of each subsequent approximation contains $B_j$.


Next, we bound the number of iterations required to find the next nonempty band $B_j$, given that we have already identified the bands $B_i$ where $i < j$.  Formally, assume that the support $S_k$ of the current approximation contains $B_i$ for each $i < j$.  In particular, the set of missing components $S_k^c \subset \supp{ \vct{y}^j }$.  It follows from relation \eqref{eqn:missing-link} that
$$
\smnorm{2}{\vct{r}^k} \leq 2.3 \enorm{ \vct{y}^j }.
$$
We can conclude that we have identified the band $B_j$ in iteration $k + \ell$ if
$$
\smnorm{2}{\vct{r}^{k+\ell}} \leq 2^{-(j+1)/2} \enorm{ \vct{x} }.
$$
According to \eqref{eqn:error-reduct}, we reduce the error by a factor of $\beta^{-1} = 0.75$ during each iteration.  Therefore, the number $\ell$ of iterations required to identify $B_j$ is at most
$$
\log_\beta \left\lceil \frac{ 2.3 \enorm{\vct{y}^j} }{ 2^{-(j+1)/2} \enorm{ \vct{x} } } \right\rceil
$$
We discover that the total number of iterations required to identify all the (nonempty) bands is at most
$$
k_{\star} \defby \sum\nolimits_{j\in J} \log_\beta \left\lceil 2.3 \cdot \frac{2^{(j+1)/2} \enorm{ \vct{y}^j }}{\enorm{ \vct{x} }} \right \rceil.
$$
For each iteration $k \geq \lfloor k_{\star} \rfloor$, it follows that $\supp{\vct{a}^k} = \supp{\vct{x}}$.

It remains to bound $k_{\star}$ in terms of the profile $p$ of the signal.  For convenience, we focus on a slightly different quantity.  First, observe that $p = \abs{J}$.  Using the geometric mean--arithmetic mean inequality, we discover that
\begin{align*}
\exp\left\{ \frac{1}{p} \sum\nolimits_{j \in J} \log \left\lceil
	2.3 \cdot \frac{2^{(j+1)/2} \enorm{ \vct{y}^j }}{\enorm{ \vct{x} }} \right\rceil
	\right\}
&\leq \exp\left\{ \frac{1}{p} \sum\nolimits_{j \in J} \log \left( 1 +
	2.3 \cdot \frac{2^{(j+1)/2} \enorm{ \vct{y}^j }}{\enorm{ \vct{x} }} \right)
	\right\} \\
&\leq \frac{1}{p} \sum\nolimits_{j \in J} \left( 1 +
	2.3 \cdot \frac{2^{(j+1)/2} \enorm{ \vct{y}^j }}{\enorm{ \vct{x} }} \right) \\
&= 1 + \frac{2.3}{p} \sum\nolimits_{j \in J}
	\left( \frac{ 2^{j+1} \enormsq{ \vct{y}^j }}{\enormsq{ \vct{x} }} \right)^{1/2}.
\end{align*}
To bound the remaining sum, we recall the relation \eqref{eqn:signal-tail-bd}.  Then we invoke Jensen's inequality and simplify the result.
\begin{align*}
\frac{1}{p} \sum_{j \in J}
	\left( \frac{ 2^{j+1} \enormsq{ \vct{y}^j }}{\enormsq{ \vct{x} }} \right)^{1/2}
&\leq \frac{1}{p} \sum\nolimits_{j \in J}
	\left( 2^{j+1} \sum\nolimits_{i \geq j} 2^{-i} \abs{B_i} \right)^{1/2}
\leq \left( \frac{1}{p} \sum\nolimits_{j\in J}
	2^{j+1} \sum_{i \geq j} 2^{-i} \abs{B_i} \right)^{1/2} \\
&\leq \left( \frac{1}{p} \sum\nolimits_{i \geq 0} \abs{B_i}
	\sum\nolimits_{j \leq i} 2^{j - i + 1} \right)^{1/2}
\leq \left( \frac{4}{p} \sum\nolimits_{i \geq 0} \abs{B_i} \right)^{1/2} \\
&= 2 \sqrt{s/p}.
\end{align*}
The final equality holds because the total number of elements in all the bands equals the signal sparsity $s$.  Combining these bounds, we reach
$$
\exp\left\{ \frac{1}{p} \sum\nolimits_{j \in J} \log \left\lceil
	2.3 \cdot \frac{2^{(j+1)/2} \enorm{ \vct{y}^j }}
		{\enorm{\vct{x}}} \right\rceil \right\}
\leq 1 + 4.6 \sqrt{s/p}.
$$
Take logarithms, multiply by $p$, and divide through by $\log \beta$.  We conclude that the required number of iterations $k_{\star}$ is bounded as
$$
k_{\star} \leq p \log_\beta( 1 + 4.6 \sqrt{s/p} ).
$$
This is the advertised conclusion.
\end{proof}

Finally, we check that the algorithm produces a small approximation error within a reasonable number of iterations.

\begin{proof}[Proof of Theorem~\ref{thm:count-sparse}]
Abbreviate $K = \lfloor p \log( 1 + 4.6 \sqrt{s/p} ) \rfloor$.
Suppose that \eqref{eqn:resid-err-bd} never holds during the first $K$ iterations of the algorithm.  Under this circumstance, Lemma~\ref{lem:iter-alter} implies that both \eqref{eqn:missing-link} and \eqref{eqn:error-reduct} hold during each of these $K$ iterations.  It follows from Lemma \ref{lem:iter-count} that the support $S_K$ of the $K$th approximation equals the support of $\vct{x}$.  Since $S_K$ is contained in the merged support $T_K$, we see that the vector $\vct{x}\restrict{T_K^c} = \vct{0}$.  Therefore, the estimation and pruning results, Lemmas~\ref{lem:estimation} and~\ref{lem:pruning}, show that
$$
\smnorm{2}{ \vct{r}^K }
	\leq 2 \cdot \left( 1.112 \enorm{ \vct{x}\restrict{T_K^c} }
		+ 1.06 \enorm{ \vct{e}} \right)
	= 2.12 \enorm{\vct{e}}.
$$
This estimate contradicts \eqref{eqn:resid-err-bd}. 

It follows that there is an iteration $k \leq K$ where  \eqref{eqn:resid-err-bd} is in force.  Repeated applications of the iteration invariant, Theorem~\ref{thm:invar-sparse}, allow us to conclude that
$$
\smnorm{2}{ \vct{r}^{K + 6} } < 17 \enorm{\vct{e}}.
$$
This point completes the argument.
\end{proof}



%

\subsection{Proof of Theorem~\ref{thm:cosamp-count}}

Finally, we extend the sparse iteration count result to the general case.

\begin{thm}[Iteration Count] \label{thm:count-detail}
Let $\vct{x}$ be an arbitrary signal, and define $p = {\rm profile}(\vct{x}_s)$.  After at most
$$
p \log_{4/3}(1 + 4.6 \sqrt{s/p}) + 6
$$
iterations, \cosamp\ produces an approximation $\vct{a}$ that satisfies
$$
\enorm{ \vct{x} - \vct{a} } \leq 20 \enorm{ \vct{e} }.
$$
\end{thm}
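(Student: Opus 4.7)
The plan is to reduce the general case to the sparse case already handled by Theorem~\ref{thm:count-sparse}, using the reduction device from Lemma~\ref{lem:reduction}. Concretely, I would start by rewriting the sample vector as $\vct{u} = \Fee \vct{x}_s + \widetilde{\vct{e}}$, where the effective noise $\widetilde{\vct{e}} = \Fee(\vct{x} - \vct{x}_s) + \vct{e}$ is bounded by Lemma~\ref{lem:reduction} as
$$
\enorm{\widetilde{\vct{e}}} \;\leq\; 1.05\left[\enorm{\vct{x} - \vct{x}_s} + \tfrac{1}{\sqrt{s}}\pnorm{1}{\vct{x} - \vct{x}_s}\right] + \enorm{\vct{e}} \;\leq\; 1.05\,\nu.
$$
From the algorithm's point of view, running \cosamp\ on $\vct{u}$ is indistinguishable from running it on noisy samples of the $s$-sparse signal $\vct{x}_s$, and the sampling matrix $\Fee$ still has $\delta_{4s} \leq 0.1$, so the sparse hypotheses are in force.

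Next I would apply Theorem~\ref{thm:count-sparse} directly to $\vct{x}_s$ with noise $\widetilde{\vct{e}}$. Because the profile parameter in that theorem is defined for the sparse target, using $p = {\rm profile}(\vct{x}_s)$ aligns perfectly with the hypotheses. The theorem guarantees that after at most $p\log_{4/3}(1 + 4.6\sqrt{s/p}) + 6$ iterations, \cosamp\ outputs $\vct{a}$ with
$$
\enorm{\vct{x}_s - \vct{a}} \;\leq\; 17\,\enorm{\widetilde{\vct{e}}}.
$$

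To finish, I would split the error via the triangle inequality and substitute the bound on $\enorm{\widetilde{\vct{e}}}$:
$$
\enorm{\vct{x} - \vct{a}} \;\leq\; \enorm{\vct{x} - \vct{x}_s} + \enorm{\vct{x}_s - \vct{a}} \;\leq\; \enorm{\vct{x} - \vct{x}_s} + 17\cdot 1.05\left[\enorm{\vct{x} - \vct{x}_s} + \tfrac{1}{\sqrt{s}}\pnorm{1}{\vct{x} - \vct{x}_s}\right] + 17\,\enorm{\vct{e}}.
$$
Collecting terms and comparing against the definition \eqref{eqn:unrecoverable} of $\nu$ gives a bound of the form $C\,\nu$ with $C < 20$, which recovers the advertised inequality (and matches Theorem~\ref{thm:cosamp-count}).

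The main obstacle is essentially bookkeeping: one has to keep the constant on the right-hand side below $20$ after expanding $\enorm{\widetilde{\vct{e}}}$ and absorbing the stray $\enorm{\vct{x} - \vct{x}_s}$ term produced by the triangle inequality. There is also a small conceptual point worth checking — namely, that the iteration count in Theorem~\ref{thm:count-sparse} depends only on the profile of the sparse target and on $s$, not on the noise level, so invoking it with $\widetilde{\vct{e}}$ in place of $\vct{e}$ costs nothing in the iteration bound. Everything else is a direct substitution.
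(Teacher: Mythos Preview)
Your proposal is correct and follows essentially the same route as the paper's own proof: reduce to the sparse case via Lemma~\ref{lem:reduction}, invoke Theorem~\ref{thm:count-sparse} on $\vct{x}_s$ with effective noise $\widetilde{\vct{e}}$, then use the triangle inequality and collect constants to land below $20\nu$. The paper's arithmetic gives coefficients $18.9$, $17.9$, and $17$ on the three terms of $\nu$, matching yours up to rounding; note also that both your argument and the paper's proof conclude with $20\nu$ rather than the $20\enorm{\vct{e}}$ printed in the statement, which is evidently a typo (cf.\ Theorem~\ref{thm:cosamp-count}).
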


\begin{proof}
Let $\vct{x}$ be a general signal, and let $p = {\rm profile}(\vct{x}_s)$.  Lemma~\ref{lem:reduction} allows us to write the noisy vector of samples $\vct{u} = \Fee \vct{x}_s + \widetilde{\vct{e}}$.  The sparse iteration count result, Theorem~\ref{thm:count-sparse}, states that after at most
$$
p \log_{4/3}(1 + 4.6 \sqrt{s/p}) + 6
$$
iterations, the algorithm produces an approximation $\vct{a}$ that satisfies
$$
\enorm{ \vct{x}_s - \vct{a} } \leq 17 \enorm{ \widetilde{\vct{e}} }.
$$
Apply the lower triangle inequality to the left-hand side.  Then recall the estimate for the noise in Lemma~\ref{lem:reduction}, and simplify to reach
\begin{align*}
\enorm{ \vct{x} - \vct{a} }
	&\leq 17 \enorm{ \widetilde{\vct{e}} } + \enorm{ \vct{x} - \vct{x}_s } \\
	&\leq 18.9 \enorm{ \vct{x} - \vct{x}_s }
		+ \frac{17.9}{\sqrt{s}} \pnorm{1}{ \vct{x} - \vct{x}_s }
		+ 17 \enorm{ \vct{e} } \\
	&< 20 \nu,
\end{align*}
where $\nu$ is the unrecoverable energy.  
\end{proof}

\begin{proof}[Proof of Theorem~\ref{thm:cosamp-count}]
Invoke Theorem~\ref{thm:count-detail}.  Recall that the estimate for the number of iterations is maximized with $p = s$, which gives an upper bound of $6(s+1)$ iterations, independent of the signal.
\end{proof}


\bibliography{cosamp}

\begin{thebibliography}{10}

\bibitem{Bjo96:Numerical-Methods}
{\AA}.~Bj{\"o}rck.
\newblock {\em Numerical Methods for Least Squares Problems}.
\newblock SIAM, Philadelphia, 1996.

\bibitem{CRT06:Robust-Uncertainty}
E.~Cand{\`e}s, J.~Romberg, and T.~Tao.
\newblock Robust uncertainty principles: {E}xact signal reconstruction from
  highly incomplete {F}ourier information.
\newblock {\em IEEE Trans. Info. Theory}, 52(2):489--509, Feb. 2006.

\bibitem{CRT06:Stable}
E.~Cand\`es, J.~Romberg, and T.~Tao.
\newblock Stable signal recovery from incomplete and inaccurate measurements.
\newblock {\em Communications on Pure and Applied Mathematics},
  59(8):1207--1223, 2006.

\bibitem{Can08:Restricted-Isometry}
E.~J. Cand\`es.
\newblock The restricted isometry property and its implications for compressed
  sensing.
\newblock Submitted for publication, Feb. 2008.

\bibitem{CT05:Decoding-Linear}
E.~J. Cand{\`e}s and T.~Tao.
\newblock Decoding by linear programming.
\newblock {\em IEEE Trans. Info. Theory}, 51(12):4203--4215, Dec. 2005.

\bibitem{CT06:Near-Optimal}
E.~J. Cand{\`e}s and T.~Tao.
\newblock Near optimal signal recovery from random projections: {U}niversal
  encoding strategies?
\newblock {\em IEEE Trans. Info. Theory}, 52(12):5406--5425, Dec. 2006.

\bibitem{CDD06:Remarks}
A.~Cohen, W.~Dahmen, and R.~DeVore.
\newblock Compressed sensing and best $k$-term approximation.
\newblock Submitted for publication, 2006.

\bibitem{CLRS01:Intro-Algorithms}
T.~Cormen, C.~Lesierson, L.~Rivest, and C.~Stein.
\newblock {\em Introduction to Algorithms}.
\newblock MIT Press, Cambridge, MA, 2nd edition, 2001.

\bibitem{CM05:Combinatorial}
G.~Cormode and S.~Muthukrishnan.
\newblock Combinatorial algorithms for compressed sensing.
\newblock Technical report, DIMACS, 2005.

\bibitem{DM08:Subspace-Pursuit}
W.~Dai and O.~Milenkovic.
\newblock Subspace pursuit for compressive sensing: {C}losing the gap between
  performance and complexity.
\newblock Submitted for publication, Mar. 2008.

\bibitem{DDM04:Iterative-Thresholding}
I.~Daubechies, M.~Defrise, and C.~De Mol.
\newblock An iterative thresholding algorithm for linear inverse problems with
  a sparsity constraint.
\newblock {\em Comm. Pure Appl. Math.}, 57:1413--1457, 2004.

\bibitem{Don06:Compressed-Sensing}
D.~L. Donoho.
\newblock Compressed sensing.
\newblock {\em IEEE Trans. Info. Theory}, 52(4):1289--1306, Apr. 2006.

\bibitem{DTDS06:Sparse-Solution}
D.~L. Donoho, Y.~Tsaig, I.~Drori, and J.-L. Starck.
\newblock Sparse solution of underdetermined linear equations by stagewise
  {O}rthogonal {M}atching {P}ursuit ({StOMP}).
\newblock Submitted for publication, 2007.

\bibitem{FNW07:Gradient-Projection}
M.~A.~T. Figueiredo, R.~D. Nowak, and S.~J. Wright.
\newblock Gradient projection for sparse reconstruction: {A}pplication to
  compressed sensing and other inverse problems.
\newblock {\em IEEE J.~Selected Topics in Signal Processing: Special Issue on
  Convex Optimization Methods for Signal Processing}, 1(4):586--598, 2007.

\bibitem{GG84:On-widths}
A.~Garnaev and E.~Gluskin.
\newblock On widths of the {E}uclidean ball.
\newblock {\em Sov. Math. Dokl.}, 30:200--204, 1984.

\bibitem{GSTV07:Algorithmic}
A.~Gilbert, M.~Strauss, J.~Tropp, and R.~Vershynin.
\newblock Algorithmic linear dimension reduction in the $\ell_1$ norm for
  sparse vectors.
\newblock Submitted for publication, August 2006.

\bibitem{GSTV07:HHS}
A.~Gilbert, M.~Strauss, J.~Tropp, and R.~Vershynin.
\newblock One sketch for all: Fast algorithms for compressed sensing.
\newblock In {\em Proc. 39th ACM Symp. Theory of Computing}, San Diego, June
  2007.

\bibitem{GGIKMS02:Fast-Small-Space}
A.~C. Gilbert, S.~Guha, P.~Indyk, Y.~Kotidis, S.~Muthukrishnan, and M.~J.
  Strauss.
\newblock Fast, small-space algorithms for approximate histogram maintenance.
\newblock In {\em ACM Symposium on Theoretical Computer Science}, 2002.

\bibitem{GGIMS02:Near-Optimal-Sparse}
A.~C. Gilbert, S.~Guha, P.~Indyk, S.~Muthukrishnan, and M.~J. Strauss.
\newblock Near-optimal sparse {F}ourier representations via sampling.
\newblock In {\em Proc. of the 2002 ACM Symposium on Theory of Computing STOC},
  pages 152--161, 2002.

\bibitem{GMS03:Approximation-Functions}
A.~C. Gilbert, M.~Muthukrishnan, and M.~J. Strauss.
\newblock Approximation of functions over redundant dictionaries using
  coherence.
\newblock In {\em Proc. of the 14th Annual ACM-SIAM Symposium on Discrete
  Algorithms}, Jan. 2003.

\bibitem{GMS05:Improved}
A.~C. Gilbert, S.~Muthukrishnan, and M.~J. Strauss.
\newblock Improved time bounds for near-optimal sparse {F}ourier representation
  via sampling.
\newblock In {\em Proceedings of SPIE Wavelets XI}, San Diego, CA, 2005.

\bibitem{I07:sub-linear}
M.~Iwen.
\newblock A deterministic sub-linear time sparse {F}ourier algorithm via
  non-adaptive compressed sensing methods.
\newblock In {\em Proc. ACM-SIAM Symposium on Discrete Algorithms (SODA)},
  2008.

\bibitem{Kas77:The-widths}
B.~Kashin.
\newblock The widths of certain finite dimensional sets and classes of smooth
  functions.
\newblock {\em Izvestia}, 41:334--351, 1977.

\bibitem{KKL+06:Method-Large-Scale}
S.-J. Kim, K.~Koh, M.~Lustig, S.~Boyd, and D.~Gorinevsky.
\newblock A method for large-scale $\ell_1$-regularized least-squares problems
  with applications in signal processing and statistics.
\newblock Submitted for publication, 2007.

\bibitem{MZ93:Matching-Pursuits}
S.~Mallat and Z.~Zhang.
\newblock {M}atching pursuits with time-frequency dictionaries.
\newblock {\em IEEE Trans. Signal Process.}, 41(12):3397--3415, 1993.

\bibitem{Mil02:Subset-Selection}
A.~J. Miller.
\newblock {\em Subset Selection in Regression}.
\newblock Chapman and Hall, London, 2nd edition, 2002.

\bibitem{NV07:ROMP-Stable}
D.~Needell and R.~Vershynin.
\newblock Signal recovery from incomplete and inaccurate measurements via
  regularized orthogonal matching pursuit.
\newblock Submitted for publication, October 2007.

\bibitem{NV07:Uniform-Uncertainty}
D.~Needell and R.~Vershynin.
\newblock Uniform uncertainty principle and signal recovery via regularized
  orthogonal matching pursuit.
\newblock Submitted for publication, July 2007.

\bibitem{NN94:Interior-Point}
Y.~E. Nesterov and A.~S. Nemirovski.
\newblock {\em Interior Point Polynomial Algorithms in Convex Programming}.
\newblock SIAM, Philadelphia, 1994.

\bibitem{Paj08:Personal-Communication}
A.~Pajor.
\newblock Personal communication, Feb. 2008.

\bibitem{RG08:Sampling-Bounds}
G.~Reeves and M.~Gastpar.
\newblock Sampling bounds for sparse support recovery in the presence of noise.
\newblock Submitted for publication, Jan. 2008.

\bibitem{RV06:Sparse-Reconstruction}
M.~Rudelson and R.~Vershynin.
\newblock Sparse reconstruction by convex relaxation: {F}ourier and {G}aussian
  measurements.
\newblock In {\em Proc. 40th Annual Conference on Information Sciences and
  Systems}, Princeton, Mar. 2006.

\bibitem{Tem02:Nonlinear-Methods}
V.~Temlyakov.
\newblock Nonlinear methods of approximation.
\newblock {\em Foundations of Comput. Math.}, 3(1):33--107, 2003.

\bibitem{Tro04:Greed-Good}
J.~A. Tropp.
\newblock Greed is good: {A}lgorithmic results for sparse approximation.
\newblock {\em IEEE Trans. Info. Theory}, 50(10):2231--2242, Oct. 2004.

\bibitem{Tro07:Beyond-Nyquist-Talk}
J.~A. Tropp.
\newblock Beyond {N}yquist: Efficient sampling of sparse, bandlimited signals.
\newblock Presented at SampTA, June 2007.

\bibitem{TG07:Signal-Recovery}
J.~A. Tropp and A.~C. Gilbert.
\newblock Signal recovery from random measurements via orthogonal matching
  pursuit.
\newblock {\em IEEE Trans. Info. Theory}, 53(12):4655--4666, 2007.

\bibitem{TGMS03:Improved-Sparse}
J.~A. Tropp, A.~C. Gilbert, S.~Muthukrishnan, and M.~J. Strauss.
\newblock Improved sparse approximation over quasi-incoherent dictionaries.
\newblock In {\em Proc. 2003 IEEE International Conference on Image
  Processing}, Barcelona, 2003.

\end{thebibliography}

\end{document}